\documentclass[11pt, twoside, reqno]{amsart}

\usepackage{amsmath}
\usepackage{amsfonts}
\usepackage{amssymb}
\usepackage{amsthm}
\usepackage{hyperref}
\usepackage{graphicx}
\usepackage{cite}
\usepackage{caption}
\usepackage{color}



\newcommand{\Addresses}{{
  \bigskip
  \small
  
  \textsc{Department of Pure Mathematics and Mathematical Statistics, University of Cambridge,
Cambridge CB3 0WB, UK}\par\nopagebreak
  \textit{E-mail address:} \texttt{m.cekic@dpmms.cam.ac.uk}
  
}}

\usepackage[top=2.5cm, bottom=2.5cm, left=2.5cm, right=2.5cm, headsep=0.2in]{geometry}

\usepackage{fancyhdr}

\pagestyle{fancy}
\fancyhf{}
\fancyhead[CE]{\small\scshape Mihajlo Ceki\'{c}}
\fancyhead[CO]{\small\scshape The Calder\'{o}n problem for connections}
\fancyhead[LE,RO]{\thepage}

\DeclareMathOperator{\Tr}{tr}
\DeclareMathOperator{\im}{Im}
\DeclareMathOperator{\re}{Re}

\theoremstyle{plain}
\newtheorem{theorem}{Theorem}[section]
\theoremstyle{definition}
\newtheorem{definition}[theorem]{Definition}
\theoremstyle{plain}
\newtheorem{lemma}[theorem]{Lemma}
\newtheorem{rem}[theorem]{Remark}
\newtheorem{prop}[theorem]{Proposition}
\newtheorem{conj}[theorem]{Conjecture}

\newtheorem{que}[theorem]{Question}

\numberwithin{equation}{section}

\newcommand{\Lapl}{\mathcal{L}}

\begin{document}

\title{The Calder\'{o}n problem for connections}
\author{Mihajlo Ceki\'{c}}
\date{}

\maketitle

\begin{abstract}
In this paper we consider the problem of identifying a connection $\nabla$ on a vector bundle up to gauge equivalence from the Dirichlet-to-Neumann map of the connection Laplacian $\nabla^*\nabla$ over conformally transversally anisotropic (CTA) manifolds. This was proved in \cite{LCW} for line bundles in the case of the transversal manifold being simple -- we generalise this result to the case where the transversal manifold only has an injective ray transform. Moreover, the construction of suitable Gaussian beam solutions on vector bundles is given for the case of the connection Laplacian and a potential, following the works of \cite{CTA}. This in turn enables us to construct the Complex Geometrical Optics (CGO) solutions and prove our main uniqueness result. We also reduce the problem to a new non-abelian X-ray transform for the case of simple transversal manifolds and higher rank vector bundles. Finally, we prove the recovery of a flat connection in general from the DN map, up to gauge equivalence, using an argument relating the Cauchy data of the connection Laplacian and the holonomy.
\end{abstract}

\section{Introduction}

The full Calder\'{o}n problem consists in determining a metric $g$ on a manifold up to an isometry that fixes every point of the boundary from the Dirichlet-to-Neumann (DN) map. It has been one of the main drives in the area of geometric inverse problems. In this generality the problem is still open, but considerable partial results exist under suitable assumptions on $g$. There are variations of this problem that are physically motivated and which have received a lot of attention -- namely, one can consider the operator $\Delta + X + q$, where $X$ is a first order term related to the magnetic potential and $q$ is a zero order term related to the electric potential. 


Moreover, a very interesting case is the one of the ``twisted" or connection Laplacian $\Lapl = \nabla^*\nabla$, where $\nabla$ is the covariant derivative. Let us consider a Hermitian vector bundle $E$ over a Riemannian manifold $(M, g)$ (equipped with a fibrewise Hermitian inner product) and a unitary connection $A$ on $E$. A \textit{gauge equivalence} $\psi$ is a section of the automorphism bundle $\text{Aut}E$, that is a bundle isomorphism that preserves the Hermitian structure. One then has a natural gauge invariance of the DN map (denoted by $\Lambda_A$ for the corresponding operator $\Lapl$) associated with the connection Laplacian on the vector bundle $E$; more precisely, if we denote the pullback connection by $\nabla_B = \psi^* \nabla_A = \psi \nabla_A \psi^{-1}$ and in addition we assume $\psi|_{\partial M} = Id$, then $\Lambda_A = \Lambda_B$. As with many similar problems, the question is: is this the only obstruction to injectivity? One can then pose the following:

\begin{conj}\label{conjecture1}
Given two unitary connections $A$ and $B$ on $E$, we have the equivalence: $\Lambda_A = \Lambda_B$ if and only if there exists a gauge equivalence that is the identity at the boundary that pulls back $B$ to $A$.
\end{conj}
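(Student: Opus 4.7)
My plan is to attack the conjecture via Complex Geometrical Optics (CGO) solutions and the boundary pairing that follows from $\Lambda_A = \Lambda_B$. First one reduces to the conformally transversally anisotropic setting, writing $g = c(e \oplus g_0)$ on a product $\mathbb{R} \times M_0$, so that a limiting Carleman weight $\varphi = \pm t$ is available in the transversal direction. Following the Gaussian beam approach of \cite{CTA} and adapting it to $E$-valued solutions, I would construct CGOs of the form $u = e^{-\tau(\varphi + i\rho)}(a + r)$, in which the leading amplitude $a$ is an $E$-valued Gaussian beam concentrated on a non-tangential geodesic $\gamma$ of the transversal manifold $M_0$ and which carries an extra parallel-transport factor for $\nabla_A$ along $\gamma$. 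A Carleman estimate in Sobolev spaces of negative order would then control the remainder as $r = O(\tau^{-1})$.

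Inserting such CGOs for $A$ and $B$ into the integral identity $\int_M \langle (\Lapl_A - \Lapl_B) u, v \rangle \, dV = 0$ (which follows from $\Lambda_A = \Lambda_B$ once boundary data are matched), expanding $\Lapl_A - \Lapl_B$ in terms of $A - B$, and then taking the stationary-phase limit $\tau \to \infty$, one obtains an integral along the geodesic $\gamma$ involving the parallel transports associated with the two connections. This is precisely a \emph{non-abelian X-ray transform} of the pair $(A, B)$. By letting $\gamma$ range over all non-tangential geodesics joining boundary points of $M_0$ and varying an auxiliary transversal Fourier parameter, one extracts the full non-abelian X-ray data of the difference $A - B$.

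The final step is to invoke injectivity of this non-abelian X-ray transform modulo gauge: if the transforms agree along all such geodesics, then there should exist a bundle automorphism $\psi$ with $\psi|_{\partial M} = \mathrm{Id}$ pulling $\nabla_B$ back to $\nabla_A$. For line bundles the transform linearises to the attenuated scalar X-ray transform on $1$-forms, so injectivity of the ordinary ray transform on $M_0$ is enough, recovering the generalisation announced in the abstract. For flat connections, the DN map should determine the holonomy representation of $\nabla_A$, and the required gauge equivalence can be constructed directly by parallel transport, bypassing the X-ray argument entirely.

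The principal obstacle, and the reason the conjecture remains open, is the injectivity of the non-abelian X-ray transform for higher-rank unitary connections on non-simple transversal manifolds: this is a substantive question in integral geometry and is not known in full generality. I therefore expect that Conjecture \ref{conjecture1} can be proved along these lines only in the rank one CTA case (under the injective transversal ray transform hypothesis) and in the flat case, with the general higher-rank CTA setting being reduced to an explicit non-abelian X-ray problem rather than settled outright.
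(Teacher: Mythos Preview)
Your assessment is correct: Conjecture~\ref{conjecture1} is not proved in the paper in full generality, and your outline---CGOs built from Gaussian beams on a CTA manifold, Carleman estimates in negative-order Sobolev spaces, an integral identity, reduction to a non-abelian ray transform (the paper's Question~\ref{conjecture2}), full resolution for line bundles via the ordinary transversal X-ray transform, and a separate holonomy argument for flat connections---is exactly the strategy the paper carries out. One technical refinement: the amplitude does not carry a parallel-transport factor along $\gamma$ in the usual ODE sense, but rather solves a $\bar\partial$-type transport equation in the $(x_1,t)$-plane (equation~\eqref{transport2}), and in the line-bundle endgame the attenuated transform is reduced to the unattenuated one by differentiating repeatedly in the attenuation parameter $\lambda$ rather than by a direct linearisation.
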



This problem is solved completely for the case of surfaces in \cite{2D}. In higher dimensions there are several partial cases considered: \cite{LCW, Eskin, MagU}. The two most relevant for us are Eskin's result in \cite{Eskin} which solves the Conjecture \ref{conjecture1} when $M$ is a domain in Euclidean space with Euclidean metric and the result in \cite{LCW} which considers the line bundle case, where $(M, g)$ is assumed to have the admissible property. The approach we will take is the one initiated by Sylvester and Uhlmann \cite{SU} and later generalised by \cite{LCW} and others -- it can be briefly described in steps as:
\begin{enumerate}
\item Prove a suitable integral identity based on integration by parts.
\item Prove the necessary Carleman estimates and obtain the existence of the Complex Geometrical Optics (CGO) solutions.
\item Insert these solutions in the identity and use their density to make a global conclusion about the involved quantities.
\item Reduce the problem to a question of injectivity of an X-ray transform (or some other transform).
\end{enumerate}

In this work, we have mostly restricted our attention to the special class of manifolds defined below (this is the setting discussed in \cite{LCW} and \cite{CTA}):

\begin{definition}
Let $(M, g)$ be a smooth compact, oriented Riemannian manifold of dimension $n \geq 3$, with boundary and let $T = (\mathbb{R} \times M_0, e \oplus g_0)$, where $e$ is the Euclidean metric and $(M_0, g_0)$ a Riemannian manifold with boundary of dimension $n-1$. We say that $(M, g)$ is \textit{conformally transversally anisotropic} (CTA) if $(M, cg)$ is isometrically embedded into $T$ for some positive function $c$ on $M$.
\end{definition}
In this paper, we have completely covered and proved the conjecture for line bundles, in the case of CTA manifolds and with the hypothesis of injectivity of the ray transform on the transversal manifold $M_0$ (see Theorem \ref{maintheorem}) -- this result is new in the sense that we have significantly weakened the hypothesis on $M_0$.

In order to state the Main Theorem, we need to set up some notation: let $F(-\infty) = F = \{x \in \partial M \mid \langle{\frac{\partial}{\partial x_1}, \nu(x)}\rangle = c(x) \nu_1(x) \leq 0\}$, which we call the \emph{front side} and the analogous set with $\leq$ replaced with $>$ we call the \emph{back side}; here $\nu(x)$ is the outer normal. We also use the notation $\partial M_{-} = F$ and $\partial M_{+} = \overline{B}$ (see Figure~\ref{frontandback}). Moreover, we remark that this setup was used in \cite{MagU} in the connections setting in order to prove a suitable partial data result in Euclidean domains; the analogy with our case is that we are considering rays from the ``point at infinity", rather than from the points near the boundary. This approach for partial data problems (with the front and back face structure) originates from Bukhgeim and Uhlmann \cite{BU02} and was further developed developed by Kenig, Sj\"ostrand and Uhlmann \cite{LCW1}.

Furthermore, lets us spell out some basic definitions about the $X$-ray transform. Let $SM_0 = \{(x, \xi) \in\mid x \in M_0 \text{ and } |\xi| = 1\}$ denote the sphere bundle of $M_0$ and consider the set of all inward and outward pointing vectors:
\[\partial_{\pm}SM_0 = \{(x, \xi) \in SM_0 \mid x \in \partial M_0 \text{ and } \pm \langle{\xi, \nu(x)}\rangle \leq 0\}\]
Then, let us denote by $\gamma_{x, \xi}$ the unique geodesic in $M_0$ with $\gamma_{x, \xi}(0) = x$ and $\dot{\gamma}_{x, \xi}(0) = \xi$ for any $(x, \xi) \in TM$; we define the exit time $\tau(x, \xi)$ as the first time when $\gamma_{x, \xi}$ hits the boundary $\partial M_0$ (possibly infinite). Then we denote the set of \emph{trapped} geodesics by:
\[\Gamma_{+} = \{(x, \xi) \in \partial_+SM_0 \mid \tau(x, \xi) = \infty\}\]
With this in mind, we may define the geodesic $X$-ray transform of a smooth $1$-form $\alpha$ and a function $f$ on $M_0$, for all $(x, \xi) \in \partial_+ SM_0 \setminus \Gamma_+$:
\[I(f, \alpha)(x, \xi) = \int_0^{\tau(x, \xi)} \Big(f(\gamma_{x, \xi}(t)) + \alpha\big(\gamma_{x, \xi}(t), \dot{\gamma}_{x, \xi}(t)\big)\Big) dt\]
There is an obstruction to injectivity of this transform:
\begin{definition}
We say that the $X$-ray transform is injective on functions and $1$-forms if $I(f, \alpha) = 0$ implies that $f = 0$ and the existence of a smooth function $p$ on $M_0$ with $p|_{\partial M_0} = 0$ and $\alpha = dp$.
\end{definition}

We will need another definition -- this time it is about the ``admissible" vector bundles over $M$, which is a necessary topological condition to construct the CGO solutions.

\begin{definition}\label{vecadmissible}
Let $M \Subset \mathbb{R}\times M_0$ be a CTA manifold. A vector bundle $E$ over $M$ is called \emph{admissible} if it is isomorphic to a pullback bundle $\pi^*E_0$, where $E_0$ is a vector bundle over $M_0$ and $\pi: M \to M_0$ is the projection along the $x_1$-direction.
\end{definition}

Notice the condition of admissibility of the vector bundle $E$ is a necessary and sufficient condition for the bundle $E$ to have an extension $E'$ to $\mathbb{R} \times M_0$ such that $E'|_{M} = E$. We prove the following result:



\begin{theorem}[Main Theorem]\label{maintheorem}
Let $(M, g)$ be a CTA manifold. Let $E$ be an admissible Hermitian line bundle over $M$, equipped with unitary connections $A_1$ and $A_2$. Assume furthermore the injectivity of the ray transform on functions and 1-forms on $M_0$. If $\Gamma$ is a neighbourhood of the front face of $M$, then $\Lambda_{A_1}(f)|_{\Gamma} = \Lambda_{A_2}(f)|_{\Gamma}$\footnote{Alternatively, given a connection $A$ and a subset $\Gamma \subset \partial M$ of the boundary, the partial Cauchy data space are defined as $C^{\Gamma}_{A} = \{(u|_{\partial M}, d_A u (\nu)|_{\Gamma}) \big| d_A^*d_Au = 0 \text{ and } u\in H^1(M)\}$, where $\nu$ is the outward normal; then by definition $C^{\Gamma}_{A_1} = C^{\Gamma}_{A_2}$ if and only if $\Lambda_{A_1}(f)|_{\Gamma} = \Lambda_{A_2}(f)|_{\Gamma}$ for all $f$.} for all $f \in C^{\infty}(\partial M; E|_{\partial M})$ implies the existence of a gauge equivalence that is the identity on $\Gamma$ and which pulls back $A_2$ to $A_1$.
\end{theorem}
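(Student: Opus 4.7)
The plan is to follow the four-step Sylvester--Uhlmann strategy announced in the introduction, specialized to the line-bundle case so that the difference of connections acts as a scalar perturbation and all ``non-abelian'' commutator issues disappear.

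\textbf{Step 1: Integral identity and conformal reduction.} Write $d_{A_j} = d + iA_j$, so that $A := A_2 - A_1$ is a real $1$-form. Expanding the connection Laplacians, $\Lapl_{A_1} - \Lapl_{A_2}$ is a first-order operator determined by $A$ plus a zero-order term involving $|A_j|^2$ and $d^*A_j$. Integration by parts, combined with the hypothesis that the partial Cauchy data agree on a neighborhood $\Gamma$ of the front face, yields an Alessandrini-type identity
\[
\int_M \bigl\langle (\Lapl_{A_1} - \Lapl_{A_2}) u_2, \, \overline{v_1} \bigr\rangle \, dV_g \;=\; \text{(boundary contribution on } \partial M_+),
\]
where $u_2$ solves $\Lapl_{A_2} u_2 = 0$ and $v_1$ solves $\Lapl_{A_1}^* v_1 = 0$. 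Conformal covariance of the line-bundle Laplacian (modulo a scalar potential depending on the conformal factor $c$) then allows one to work on the extended cylinder $T = \mathbb{R} \times M_0$, and admissibility of $E$ (Definition \ref{vecadmissible}) ensures that $E$, the connections, and the potentials extend to $T$.

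\textbf{Step 2: Gaussian-beam CGOs.} On $T$ the function $x_1$ is a limiting Carleman weight, and following \cite{CTA} I would construct CGO solutions of the form
\[
u = e^{-\tau(x_1 + i\psi)}\bigl(a_0 + a_{-1}\tau^{-1} + \cdots + r_\tau\bigr),
\]
where $\psi$ solves the transversal eikonal equation, $a_0$ is a Gaussian beam localized on a non-tangential geodesic $\gamma \subset M_0$, and $r_\tau = o(1)$ in a suitable norm. In the line-bundle case the transport equations for the $a_k$ reduce to scalar ODEs along $\gamma$ whose solutions involve the parallel-transport factor $\exp\bigl(-i\int_\gamma A_j\bigr)$. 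The existence of $r_\tau$ with the quantitative decay needed, and with the extra vanishing on $\partial M_+$ required to annihilate the boundary contribution from Step 1, is produced by a Carleman estimate with boundary term, adapted from Kenig--Sj\"ostrand--Uhlmann to the connection setting.

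\textbf{Step 3: Passing to the limit and X-ray transform.} Choosing $u_2$ and $v_1$ as Gaussian-beam CGOs built around the same transversal geodesic $\gamma$ and the same transversal phase $\psi$, but with opposite signs in the Carleman weight and a free Fourier factor $e^{i\lambda x_1}$ in the amplitudes, the product $u_2 \overline{v_1}$ concentrates as $\tau \to \infty$ on $\mathbb{R} \times \gamma$. A stationary-phase computation turns the left-hand side of the Alessandrini identity into the X-ray transform of the transversal part $A_T$ of $A$ (the $1$-form datum) together with a function $q$ collecting the scalar and $|A|^2$-type contributions:
\[
I\bigl(\widehat{q}(\lambda,\cdot),\, \widehat{A_T}(\lambda,\cdot)\bigr)(\gamma) = 0 \qquad \text{for every } \gamma \in \partial_+SM_0 \setminus \Gamma_+ \text{ and every } \lambda \in \mathbb{R},
\]
where the hats denote Fourier transform in $x_1$. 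The hypothesized injectivity of $I$ on functions and $1$-forms forces $\widehat{q}(\lambda,\cdot) \equiv 0$ and $\widehat{A_T}(\lambda,\cdot) = d_{M_0} p_\lambda$ with $p_\lambda|_{\partial M_0} = 0$, for every $\lambda$.

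\textbf{Step 4: Gauge reconstruction and main obstacle.} Inverting the Fourier transform in $x_1$ yields a smooth $\phi$ on $T$ (hence on $M$) with $d\phi = A_2 - A_1$, the $x_1$-component being matched either by a second family of CGOs or directly from $\widehat{q} \equiv 0$; the gauge $\psi = e^{i\phi}$ then pulls $A_2$ back to $A_1$. A standard boundary-determination argument at points of $\Gamma$ (using high-frequency solutions concentrating at a boundary point) pins down $\phi|_\Gamma$ to an additive constant, which can be absorbed so that $\psi|_\Gamma = \mathrm{Id}$. I expect the main technical obstacle to be Step 2: building Gaussian-beam CGOs that simultaneously concentrate on non-tangential geodesics of $M_0$, satisfy a Carleman estimate with the boundary term needed to kill the back-face contribution, and leave the $x_1$-frequency $\lambda$ free so that both the $1$-form and function components of the X-ray transform can be probed at every $\lambda$. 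Once these CGOs are in hand, Steps 3 and 4 proceed along well-established lines.
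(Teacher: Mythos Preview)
Your overall four-step outline is right, but two of the steps hide genuine difficulties that the paper has to work hard to resolve, and your Step~4 as written would actually fail.

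\textbf{Step 3 is not the clean X-ray transform you describe.} After inserting the Gaussian-beam CGOs and letting $\tau\to\infty$, the limit identity is not $I(\widehat q,\widehat{A_T})(\gamma)=0$. It carries two extra factors: the Gaussian-beam amplitude $e^{\Phi_1+\bar\Phi_2}$, where $\Phi_1,\Phi_2$ solve $\bar\partial$-equations involving the connections along $\mathbb{R}\times\gamma$, and an exponential attenuation $e^{-2\lambda t}$ along the geodesic. The amplitude factor is removed by a complex-analytic trick (Stokes' theorem plus taking a holomorphic logarithm on the strip, see the paper's Lemma~\ref{holo_restrict} and equations \eqref{logarithmstep0}--\eqref{logarithmstep}); only then does one get $\int_0^L e^{-\lambda r}(f+i\alpha(\dot\gamma))\,dr=0$. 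This is an \emph{attenuated} transform, and since the hypothesis only gives injectivity of the \emph{unattenuated} transform, the paper uses a further inductive argument---differentiating repeatedly in $\lambda$ at $\lambda=0$ and integrating by parts---to extract the unattenuated information. Your ``stationary phase'' sentence skips both of these nontrivial steps.

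\textbf{Step 4 has a real gap.} The X-ray analysis yields only $d(A_1-A_2)=0$, i.e.\ the difference is a closed $1$-form. Your claim that ``inverting the Fourier transform in $x_1$ yields a smooth $\phi$ with $d\phi=A_2-A_1$'' presumes exactness, which fails on non-simply-connected $M$---and the whole point of the theorem is that $M_0$ need \emph{not} be simple, so $M$ can have nontrivial $H^1$. (Concretely, the $\beta(\lambda,x')$ one constructs is an entire function of $\lambda$ with no a~priori decay, so it has no inverse Fourier transform as a function on $M$.) The paper closes this gap by an entirely separate argument (Section~\ref{holsec}): it treats $A_1-A_2$ as a flat connection and uses parallel transport plus a unique continuation principle for elliptic systems to show the holonomy is trivial, which \emph{then} produces the gauge with $\psi|_\Gamma=\mathrm{Id}$. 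You should not expect this step to come for free from the Fourier/X-ray machinery.
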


Firstly, we remark that the CGO solutions supported in a front or a back face were constructed by Chung in \cite{FC} for Euclidean domains -- this probably implies such solutions could be constructed in our setting. The existence of such CGOs would reduce the assumption of the theorem to $\Lambda_{A_1}(f)|_{\Gamma} = \Lambda_{A_2}(f)|_{\Gamma}$ for all $f \in C_0^\infty(\Gamma)$; however, due to technical reasons and simplicity we will deal only with the full Dirichlet data.

\begin{figure}[h]
    \centering
    \hspace*{-2.8cm}
    \includegraphics[width=0.8\textwidth]{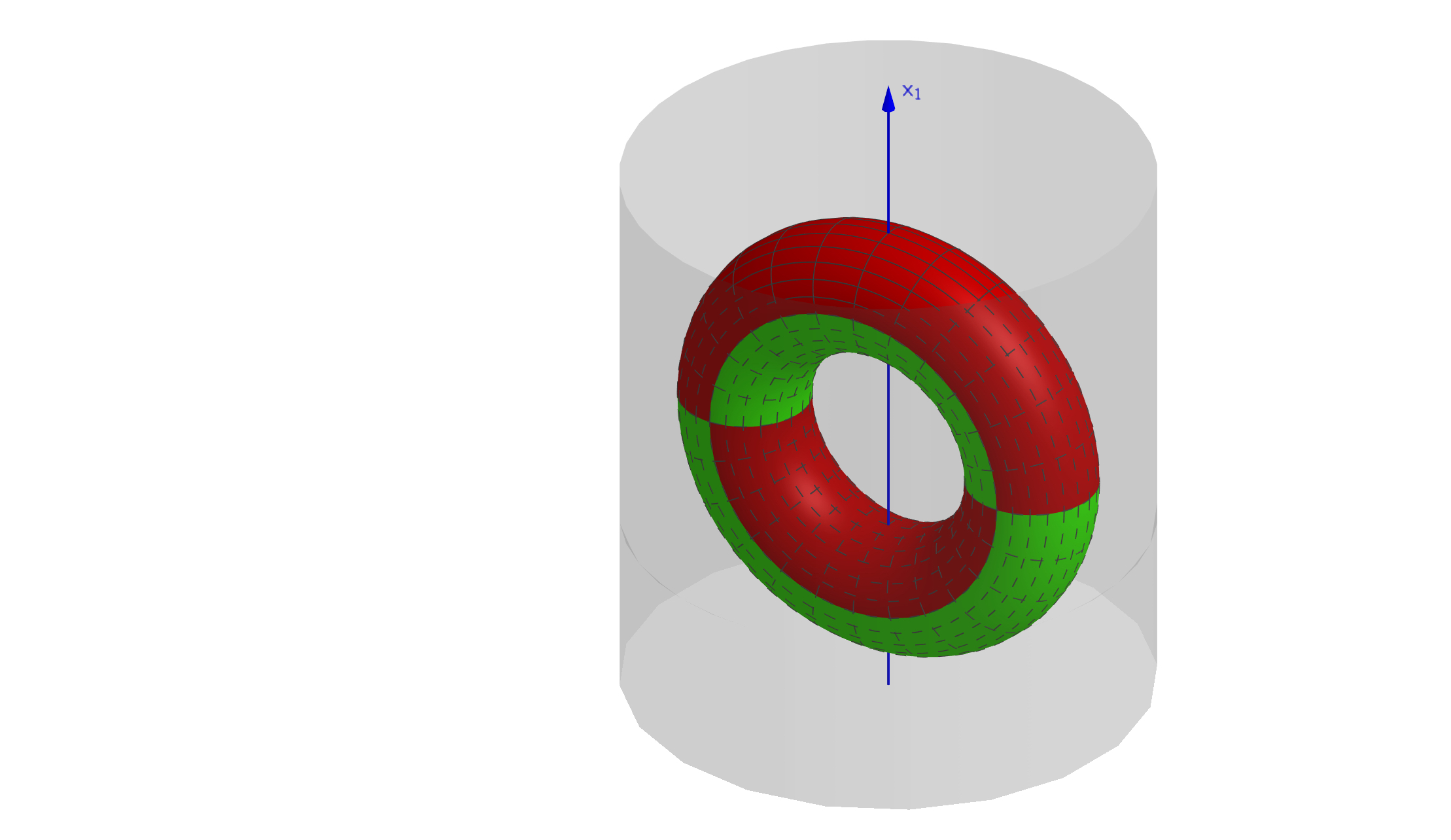}
               \captionsetup{format=hang, labelfont=sc}
    \caption{Solid torus as a CTA manifold, showing front (green) and back (red) faces.}
    \label{frontandback}
\end{figure}

This particular setting is interesting, because of the existence of the ``Euclidean direction" in our manifold, i.e. the direction set out by $\mathbb{R}$; this enables us to define a Carleman weight $\varphi (x) = x_1$, which in turn allows for the CGO solutions to be constructed (see \cite{LCW}; for an alternative construction of the CGOs using the Fourier transform in the $x_1$ variable, see \cite{mikko_calderon}). Our construction is based on the solutions known as Gaussian beams, which have already shown to be fertile in the less complicated case of the operator $\Delta + q$ in \cite{CTA}. We have also adapted the construction to the case of the connection Laplacian, valid for functions with values in a vector bundle; the idea is to show existence of approximate solutions which concentrate in a suitable way around geodesics. This is done locally in charts covering the geodesic by using a WKB-type construction and then glued together to form a global solution. Moreover, it is worth emphasising that our main result Theorem \ref{maintheorem} generalises the one present in \cite{LCW}, in that it \emph{does not ask for $M_0$ to be simple}\footnote{Simple manifolds are those for which the geodesics from every point parametrise the manifold, or more formally the exponential map is a diffeomorphism from its domain of definition; in addition, one also asks that the boundary is strictly convex (second fundamental form positive definite). The following implication holds: $M_0$ simple $\Rightarrow$ injectivity of the ray transform (see \cite{xray} for a short survey).}, which complicates the CGO construction significantly -- more concretely, it allows for the geodesics on $M_0$ to self-intersect and allows for the existence of \emph{conjugate points} (which prevent the exponential map from being a diffeomorphism). 

Furthermore, in Section 6 another approach based on the interplay between the parallel transport and the unique continuation principle (UCP) for elliptic equations is pursued. Theorem \ref{holtheorem} proves the Conjecture \ref{conjecture1} in the setting of partial data, in the case of two flat connections. The latter assumption simplifies the problem significantly, because the parallel transport along homotopic curves is then the same, which enables us to define a suitable gauge. A similar idea was already used in \cite{hol} in the case of line bundles over surfaces. Moreover, there is a natural way of pushing these results further to the case of Yang-Mills connections; this will be considered in a forthcoming paper, in which we also prove boundary determination for connections and potentials for general vector bundles (that is, the restriction of the connection to the boundary is determined from the DN map -- see Section 8 in \cite{LCW} for the case of line bundles).

In addition to the above, we also provide a general framework and base for the future work in the direction of the Calder\'{o}n problem for connections on vector bundles, by constructing the CGOs in general (see Theorem \ref{main} and Remark \ref{mainvector}). For simple transversal manifolds and the trivial vector bundle of any rank, we also get to the fourth step in our previous analysis -- see Section 4. Moreover, in this case, one can reduce the main DN inverse problem to a \textit{new non-abelian X-ray transform} -- see Question \ref{conjecture2}, which we have not found in the literature. The reduction process is fully explained and outlined in Section \ref{sec4.5}. One distinct feature of this transform is that it involves the complex derivative $\mathbb{X} = \frac{\partial}{\partial x_1} + iX$, rather than just the usual geodesic vector field derivative $X$ -- hence, one could expect that methods from complex analysis and geometry might be useful to deduce certain properties of this transform (as in \cite{Eskin}). Another characteristic property of this transform is that it is not abelian in general, making it harder to reduce to an $X$-ray transform on just $M_0$, which is usually done in such situations (see \cite{LCW}). The question is posed here in the form of a transport equation.

\begin{que}[The non-abelian Radon transform]\label{conjecture2}
Let $(M_0, g_0)$ be a compact simple manifold with boundary, with $\dim M_0 \geq 2$ and let $M$ be an isometrically embedded, compact submanifold of $T = (\mathbb{R} \times M_0, e \oplus g_0)$ with non-empty boundary and $\dim M = \dim T$. Let $E = \mathbb{R} \times M_0 \times \mathbb{C}^m$ be a Hermitian vector bundle equipped with two unitary connections $A_1$ and $A_2$, which are compactly supported and satisfy $A_1 = A_2$ on $\mathbb{R} \times M_0 \setminus M$. Let $R' = \{(x_1, x, v) \in \mathbb{R} \times SM_0: (x_1, x) \not \in M\}$. Assume we are given a smooth matrix function $G: \mathbb{R}\times SM_0 \to GL(m, \mathbb{C})$ such that, if $\mathbb{X} = \frac{\partial}{\partial x_1} + iX$, where $X$ is the geodesic vector field:
\begin{align*}
\mathbb{X}G(x_1, x, v) = -A_1(x_1, x)\Big(\frac{\partial}{\partial x_1} + iv\Big)G(x_1, x, v) + G(x_1, x, v)A_2(x_1, x)\Big(\frac{\partial}{\partial x_1} + iv\Big)
\end{align*}
for all $(x_1, x, v)$, with the additional condition $G|_{R'} = Id$. Prove that $G$ is independent of the velocity variable $v$.
\end{que}


In order to support our Theorem \ref{maintheorem}, let us list a number of results that generate a large class of non-trivial examples for which our theorem is new. Firstly, the results of Stefanov, Uhlmann and Vasy \cite{uv1, uv2} give the injectivity of the ray transform if the manifold is foliated by convex hypersurfaces up to a small set; secondly, the result of Guillarmou in \cite{colin_lens} proves the injectivity in the case of manifolds with negative curvature and strictly convex boundary (second fundamental form positive). Finally, the very recent results of Paternain, Salo, Uhlmann and Zhou \cite{paternain2016} show that the geodesic transform is injective in the case of strictly convex manifolds with non-negative sectional curvature. The second one of these results allows existence of trapping (geodesics of infinite length), while the third one allows for the existence of conjugate points. As a concrete example of where our Main Theorem is a new result, we can let the transversal manifold $M_0$ be a catenoid -- a surface with negative curvature and for which the boundary is strictly convex; it has geodesics that are trapped (e.g. the middle circle) and hence is not simple, but the ray transform is injective by the results in \cite{colin_lens}.

Let us briefly indicate some classes of manifolds discussed in the previous paragraph that admit non-trivial line bundles. We will obtain an example where Theorem \ref{maintheorem} applies and $E$ is non-trivial, by letting $M$ to be a product manifold of such $M_0$ and a unit interval (smoothed at the corners) and $E$ to be the pullback bundle under $\pi$, as in Definition \ref{vecadmissible}. It is well-known that the topological line bundles of a space $X$ are classified by the second cohomology $H^2(X; \mathbb{Z})$ (isomorphism given by the first Chern class). Firstly, notice that all compact surfaces with non-empty boundary have a trivial $H^2$, so we have to look for $\dim M_0 \geq 3$. Let us discuss the case $\dim M_0 = 3$. By \cite{Hass} (Section 3.1), the only three manifold $M_0$ with positive sectional curvature and strictly convex boundary is the $3$-ball. On the contrary, there are plenty of examples of $3$-manifolds with negative sectional curvature and strictly convex boundary -- by \cite{Hass} (Section 3.4), such manifolds are completely classified by being irreducible and having no $\pi_1$-injective tori (right to left direction follows from Thurston's hyperbolisation theorem for Haken manifolds). More concretely, if additionally we want an example with $H^2(M_0; \mathbb{Z})$ of non-zero rank, it follows that we may take $M_0 = S_g \times [0, 1]$ for $g \geq 2$, where $S_g$ denotes the closed surface of genus $g$.





The paper is organised as follows: in the next section we provide some elementary background and also prove an integral identity based on integration by parts, while Sections \ref{sec3} and \ref{sec5} are the most technical ones -- in the former one we prove the necessary Carleman estimates for sections of vector bundles using semiclassical calculus. The latter one we divide into two parts: in the first part, we present the lengthy construction of the version of Gaussian beam solutions that is relevant for us, for general vector bundles; in the second one, we apply this construction to deduce the existence of CGO solutions. Moreover, in Section \ref{secrecovery} we prove that we may recover the differential of the connection $dA$ from the DN map in the case of line bundles. However, before that in Section \ref{CGOsimple} we consider the case where the transversal manifold is simple and for which we may construct the ansatz in a much easier way -- we conclude with a reduction to a new ray transform. Finally, the Section \ref{holsec} considers the case of two flat connections and finishes the proof of the main theorem.
\medskip

\noindent \textbf{Acknowledgements.} The author would like to express his gratitude for the support of his supervisor, Gabriel Paternain and also to Trinity College for financial support. Furthermore, he would also like to acknowledge the very useful conversations with Mikko Salo, especially in regard to the contents of the third section of the paper.

\section{Preliminaries and the identity}\label{sec2}
Throughout this section, $(M, g)$ is a compact connected Riemannian manifold of dimension $n$ with boundary, $E$ is a Hermitian vector bundle of rank $m$ over $M$, equipped with a unitary connection $\nabla$. Let $\nu$ be the outward normal to $\partial M$. We also fix a matrix valued potential $Q$, that is a section of the endomorphism bundle of $E$. Moreover, we will denote the sections of $E$ by $C^{\infty}(M; E)$ or by $\Gamma(E)$ (both notations are standard). Recall that the connection gives rise to a covariant derivative $\nabla: \Gamma(E) \to \Gamma(E \otimes T^*M)$; moreover, in a trivial vector bundle $E = M \times \mathbb{C}^m$ with the standard Hermitian inner product in the fibers, a connection is given by a $m \times m$ matrix of one-forms $A$ and the covariant derivative by $d_A = d + A$. We will interchangeably use the following symbols for the covariant derivative: $d_A$, $\nabla_A$ and $\nabla$; subscript $A$ here denotes the connection as a formal object, but can also mean the connection $1$-form, depending on the context. Furthermore, we will assume the summation convention, where repeated indices mean that we sum over the corresponding index. All manifolds in this paper are assumed to be orientable.

The covariant derivative being unitary, means the following compatibility condition: $d\langle{u, v}\rangle_E = \langle{\nabla u, v}\rangle_E + \langle{u, \nabla v}\rangle_E$. We can use the Hermitian inner product to define inner product on sections of $E$: 
\begin{center}
$(u, v)_{L^2(M; E)} = \int_M \langle{u, v}\rangle_E dV$
\end{center}
where $dV$ is the volume form on $M$ (sometimes omitted from the integrals for simplicity) and more generally on $E$-valued one forms (that is, sections of $C^{\infty}(M; E \otimes T^*M)$), where in local coordinates $\alpha = \alpha_i dx^i$ and $\beta = \beta_i dx^i$
\begin{center}
$(\alpha, \beta)_{L^2(M; E \otimes T^*M)} = \int_M g^{ij}\langle{\alpha_i, \beta_j}\rangle_E dV$
\end{center}
By slightly abusing the notation, we will sometimes also use $(\cdot, \cdot)$ (in addition to $\langle{\cdot, \cdot}\rangle$) to denote the fibrewise inner product on differential forms -- this will be clear from the context.

Let $\nabla^*$ (or $d_A^*$) be the formal adjoint of the covariant derivative with respect to these inner products. Now using Stokes' theorem one can prove that the following identity holds (see \cite{pat2015UCP}):
\begin{align}\label{greenidentity}
(\nabla^*u, v)_{L^2} - (u, \nabla v)_{L^2} = -(\iota_\nu u, v)_{L^2(\partial{M}; E|_{\partial M})}
\end{align}
\noindent where $u$ is an $E$-valued one form and $v$ is a section of $E$.

Now we can define the \textit{twisted} or the \textit{connection Laplacian} as
\begin{align*}
\Lapl_\nabla = \nabla^*\nabla
\end{align*}
We also denote by $\Lapl_{\nabla, Q} = \nabla^*\nabla + Q$ the corresponding Schr\"{o}dinger operator and $\Lapl_{g, \nabla, Q}$ when we want to emphasise the dependence on the metric. With the assumption that $0$ is \emph{not a Dirichlet eigenvalue} of $\Lapl_{\nabla, Q}$ in $M$ (so we have unique solvability of the Dirichlet problem), the DN map is defined as:
\begin{definition}
The \textit{Dirichlet-to-Neumann map} or the \textit{DN map} $\Lambda_{\nabla, Q}: H^{\frac{1}{2}}(\partial M; E|_{\partial M}) \to H^{-\frac{1}{2}}(\partial M$\\$; E|_{\partial M})$ is defined as\footnote{In a suitable weak sense.} $\Lambda_{\nabla, Q}f = \iota_\nu \nabla u|_{\partial M}$, where $u$ is the unique solution of the elliptic problem:
\begin{equation*}
\Lapl_{\nabla, Q}u = 0 \quad \text{and} \quad u|_{\partial M} = f
\end{equation*}
where $f \in H^{\frac{1}{2}}(\partial M; E|_{\partial M})$.
\end{definition}

An alternative (not always equivalent) and a more general way of phrasing the equality of the DN maps is through the \textit{Cauchy data spaces}. The full Cauchy data spaces are given by $C_{\nabla, Q} = \Big\{\Big(u|_{\partial M}, \iota_{\nu} \nabla u|_{\partial M}\Big) \Big| \Lapl_{\nabla, Q}u = 0, u \in H^1(M)\Big\}$. It is important to point out that in one of the cases that are important to us, that is when $Q = 0$, we automatically have that zero is not a Dirichlet eigenvalue of the operator $\Lapl_\nabla$ and so the DN map is well defined.

The following identity will be used and is stated in \cite{pat2015UCP}. For a connection $A$ on a trivial bundle $E = M \times \mathbb{C}^m$, we may define $(A, \beta) = g^{ij}A_i\beta_j$ for $\beta$ an $E$-valued one-form. One can easily check that $\nabla^* = d_A^* = d^* - (A, \cdot)$, where $d^*$ is the adjoint of the ordinary differential. One can then get the expression:
\begin{lemma}
If $E = M \times \mathbb{C}^m$, we have the following expansion for the twisted Laplacian, where $A = A_idx^i$:
\begin{align*}
\Lapl_A u= \Delta u - 2(A, du) + (d^*A)u - (A, Au)
\end{align*}
\end{lemma}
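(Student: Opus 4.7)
The plan is a direct algebraic computation based on the two formulas already stated: on sections of $E = M \times \mathbb{C}^m$, $d_A = d + A$, and on $E$-valued one-forms, $d_A^* = d^* - (A, \cdot\,)$. I would begin by expanding
\begin{align*}
\Lapl_A u \;=\; d_A^* d_A u \;=\; \bigl(d^* - (A, \cdot\,)\bigr)(du + A u) \;=\; d^* du \;+\; d^*(A u) \;-\; (A, du) \;-\; (A, A u).
\end{align*}
The first term is $d^* du = \Delta u$ by definition of the Laplace--Beltrami operator (acting componentwise on $\mathbb{C}^m$-valued functions). Two of the three remaining terms already appear in the desired formula, so the entire content of the lemma reduces to verifying the Leibniz-type identity
\[
d^*(A u) \;=\; (d^*A)\, u \;-\; (A, du).
\]

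To establish this identity, I would argue in local coordinates. Writing $A = A_j\, dx^j$, the one-form $Au$ has components $(Au)_i = A_i u$, so the standard coordinate formula for the codifferential of a one-form $\omega$, namely $d^*\omega = -\tfrac{1}{\sqrt{|g|}} \partial_j\bigl(\sqrt{|g|}\, g^{ij} \omega_i\bigr)$, applied with the Leibniz rule for the derivative of a product, gives
\begin{align*}
d^*(A u) \;=\; -\frac{1}{\sqrt{|g|}} \partial_j\!\bigl(\sqrt{|g|}\, g^{ij} A_i\bigr) u \;-\; g^{ij} A_i\, \partial_j u \;=\; (d^*A)\, u \;-\; (A, du),
\end{align*}
where in the last step I used that the same coordinate formula applied entry-by-entry to the matrix-valued one-form $A$ produces $d^*A$, and that $(A, du) = g^{ij} A_i\,\partial_j u$ by the bracket convention introduced just before the lemma. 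Substituting this back into the expansion of $\Lapl_A u$ and collecting the two resulting copies of $-(A, du)$ yields the claimed expression
\[
\Lapl_A u \;=\; \Delta u \;-\; 2(A, du) \;+\; (d^*A)\, u \;-\; (A, Au).
\]

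There is no real obstacle here; the only point requiring care is that $A_i$ is matrix-valued, so all products of $A_i$ with $u$ or $\partial_j u$ must be kept in order and no commutation is allowed. As an invariant sanity check, one could alternatively derive the Leibniz identity by pairing $d^*(Au)$ against a compactly supported test section $v$ and applying the Green identity \eqref{greenidentity} together with $d(Au) \cdot v$-type manipulations; this recovers the same formula without reference to coordinates, confirming that the coordinate derivation is independent of the chart.
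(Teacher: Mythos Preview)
Your proof is correct and follows exactly the route the paper indicates: the paper does not write out a proof but explicitly records $d_A^* = d^* - (A,\cdot)$ just before the lemma and says the expression then follows, which is precisely the expansion $d_A^*d_A u = (d^* - (A,\cdot))(du + Au)$ you carry out. The Leibniz identity $d^*(Au) = (d^*A)u - (A,du)$ and the care with matrix ordering are the only nontrivial points, and you handle both.
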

For clarity, here we take the Laplacian to be with a negative sign, i.e. $\Delta u= d^* d u = -|g|^{-1/2} \frac{\partial}{\partial x^j}\\ (|g|^{1/2} g_{jk} \frac{\partial u}{\partial x^k})$, so our operator is positive definite. Therefore, we can clearly identify the second, the first and the zero order terms in the connection Laplacian. If we let $(A, Q)$ be a pair of a connection and a potential, we will sometimes use the notation of the pair $(X, q)$ to denote the matrix vector field $X$ and the matrix potential $q$ such that:
\begin{align}\label{notation}
d_A^*d_A + Q = \Delta + X + q
\end{align}
in local coordinates, or globally if the corresponding bundle is trivial. The relationship between $(A, Q)$ and $(X, q)$ is given by $X = -2g^{ij}A_i \frac{\partial}{\partial x^j}$ and $q(u) = d^*A - (A, Au) + Q(u)$. 

The next lemma computes the adjoint of the DN map, where $Q$ is in $\Gamma(\text{End }E)$:

\begin{lemma}
The following identity holds for smooth $f$ and $g$ ($Q^*$ is the Hermitian conjugate):
\begin{center}
$(\Lambda_{\nabla, Q} f, g)_{L^2(\partial M; E|_{\partial M})} = (f, \Lambda_{\nabla, Q^*} g)_{L^2(\partial M; E|_{\partial M})}$
\end{center}
\end{lemma}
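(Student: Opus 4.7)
The plan is to derive the identity from Green's second identity for the operator $\Lapl_{\nabla, Q}$, which itself is a direct consequence of the integration by parts formula \eqref{greenidentity} stated earlier in the section.

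First I would let $u$ be the solution of $\Lapl_{\nabla, Q} u = 0$ with $u|_{\partial M} = f$ and let $v$ be the solution of $\Lapl_{\nabla, Q^*} v = 0$ with $v|_{\partial M} = g$, so that by definition $\Lambda_{\nabla, Q} f = \iota_\nu \nabla u|_{\partial M}$ and $\Lambda_{\nabla, Q^*} g = \iota_\nu \nabla v|_{\partial M}$. The next step is to apply \eqref{greenidentity} with the one-form $\nabla u$ (against $v$) and then again with $\nabla v$ (against $u$), subtract the two, and use that the inner product on $E$-valued one-forms is symmetric so that $(\nabla u, \nabla v)_{L^2}$ cancels. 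This yields Green's second identity for the connection Laplacian, namely
\begin{equation*}
(\Lapl_\nabla u, v)_{L^2} - (u, \Lapl_\nabla v)_{L^2} = (u|_{\partial M}, \iota_\nu \nabla v|_{\partial M})_{L^2(\partial M; E|_{\partial M})} - (\iota_\nu \nabla u|_{\partial M}, v|_{\partial M})_{L^2(\partial M; E|_{\partial M})}.
\end{equation*}

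Now I would substitute $\Lapl_\nabla u = -Qu$ and $\Lapl_\nabla v = -Q^* v$, obtained from the two PDEs above, into the left hand side. Since $Q^*$ is the fibrewise Hermitian adjoint of $Q$, we have $(Qu, v)_{L^2} = (u, Q^* v)_{L^2}$ and hence the left hand side vanishes identically. Rearranging the resulting boundary terms and using $u|_{\partial M} = f$, $v|_{\partial M} = g$ gives exactly
\begin{equation*}
(\Lambda_{\nabla, Q} f, g)_{L^2(\partial M; E|_{\partial M})} = (f, \Lambda_{\nabla, Q^*} g)_{L^2(\partial M; E|_{\partial M})}.
\end{equation*}

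No step here is a serious obstacle: the only point requiring a little care is the verification that applying \eqref{greenidentity} symmetrically to produce Green's second identity works in the Hermitian/complex setting, where one must be careful about conjugate linearity in the second slot, but this is handled uniformly by observing that $(\nabla u, \nabla v)_{L^2}$ is the same quantity on both sides of the subtraction. A small caveat is the regularity assumption: the lemma is stated for smooth $f$ and $g$, so elliptic regularity gives $u, v \in C^\infty(M; E)$ and all integrations by parts are classical; if one wanted this for $f, g \in H^{1/2}(\partial M; E|_{\partial M})$, one would interpret the boundary pairings as the $H^{-1/2}$--$H^{1/2}$ duality and use a density argument.
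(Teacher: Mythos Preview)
Your proof is correct and takes essentially the same approach as the paper: both apply the integration by parts formula \eqref{greenidentity} to the solutions $u$ and $v$ of the two boundary value problems and use $(Qu,v)_{L^2}=(u,Q^*v)_{L^2}$ to cancel the potential terms. The only cosmetic difference is that the paper writes the one-step weak formulation $(\Lambda_{\nabla,Q}f,g)_{\partial M}=(Qu,v)_M+(\nabla u,\nabla v)_M$ and its $Q^*$-counterpart, then conjugates, whereas you first subtract to obtain Green's second identity and then substitute; the computations are identical.
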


\begin{proof}
We drop the full notation of $L^2(M; E)$. By using \eqref{greenidentity} we have:
\begin{equation}
(Q u, v)_M + (\nabla u, \nabla v)_M = (\Lambda_{\nabla, Q} f, g)_{\partial{M}}
\end{equation}
where $\Lapl_{\nabla, Q} u = 0$ and $u|_{\partial M} = f$ and any $v$ such that $v|_{\partial M} = g$. If we swap the order of $f$ and $g$ and use the fact that the inner product is Hermitian, along with $v$ being the solution to $\Lapl_{\nabla, Q^*} v = 0$ and $v|_{\partial M} = g$, we get:
\begin{align*}
(Q^* v, u)_M + (\nabla v, \nabla u)_M= (\Lambda_{\nabla, Q^*} g, f)_{\partial M}
\end{align*}
which after conjugation finishes the proof.
\end{proof}

Now we restrict our attention to the trivial vector bundle $E = M \times \mathbb{C}^m$ with the connection matrix $A$. We will use the notation $|A|^2 = g^{ij} A_i A_j$ -- please note this is \emph{not a norm}, but rather comes from the complex bilinear extension of the metric inner product and that it is endomorphism valued. Also, $(A_j)_{kl}$ will denote the $kl^{\text{th}}$ entry of the matrix $A_j$ given by the expansion $A = A_jdx^j$.

\begin{theorem}[Main identity]\label{identity}
The following identity holds for two pairs of smooth unitary connections and potentials $(A, Q_A)$ and $(B, Q_B)$, and $f$ and $g$ smooth sections of $E|_{\partial M}$:
\begin{multline}
\big((\Lambda_{A, Q_A} - \Lambda_{B, Q_B}) f, g\big)_{\partial M} = 
\Big(\big(Q_A - Q_B +|B|^2 - |A|^2\big) u , v\Big)_M  \\
+ \int_M g^{ij}\big((A - B)_j\big)_{kl} \left(u_l \frac{\partial \bar{v}_k}{\partial x^i} - \frac{\partial u_l}{\partial x^i} \bar{v}_k\right)
\end{multline}
where $u, v \in C^\infty(M; E)$ solve $\Lapl_{A, Q_A} u = 0$ with $u|_{\partial M} = f$ and $\Lapl_{B, Q^*_B} v = 0$ with $v|_{\partial M} = g$. Equivalently, for $m = 1$ one can write this as:
\begin{multline*}
\big((\Lambda_{A, q_A} - \Lambda_{B, Q_B}) f, g\big)_{\partial M} = 
\Big(\big(Q_A - Q_B + |B|^2 - |A|^2\big) u , v\Big)_M \\
+ \int_M \langle{u d\bar{v} - \bar{v} du, B - A}\rangle_g
\end{multline*}
\end{theorem}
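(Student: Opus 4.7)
The plan is to apply the Green identity \eqref{greenidentity} to rewrite each DN-pairing on the left-hand side as a bulk integral, subtract, and then expand the covariant derivatives via $d_A = d + A$. Since $v|_{\partial M} = g$ and $u$ solves $\Lapl_{A, Q_A} u = 0$ (so $d_A^* d_A u = -Q_A u$), applying \eqref{greenidentity} to the $E$-valued one-form $d_A u$ gives
\[
(\Lambda_{A, Q_A} f, g)_{\partial M} \;=\; (\iota_\nu d_A u, v)_{\partial M} \;=\; (d_A u, d_A v)_M + (Q_A u, v)_M.
\]
For the $B$-term I would use the adjoint lemma just proved to write $(\Lambda_{B, Q_B} f, g)_{\partial M} = \overline{(\Lambda_{B, Q_B^*} g, f)_{\partial M}}$, apply \eqref{greenidentity} to the pair $(d_B v, u)$ with $\Lapl_{B, Q_B^*} v = 0$, and take complex conjugates, which yields
\[
(\Lambda_{B, Q_B} f, g)_{\partial M} \;=\; (d_B u, d_B v)_M + (Q_B u, v)_M.
\]
Subtracting reduces the problem to evaluating $(d_A u, d_A v)_M - (d_B u, d_B v)_M$.

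Expanding $d_A = d + A$ and $d_B = d + B$, the $(du, dv)_M$ pieces cancel and the remainder splits as
\[
(Au, Av)_M - (Bu, Bv)_M + (du, (A-B) v)_M + ((A-B) u, dv)_M.
\]
Unitarity makes each $A_i$ skew-Hermitian, so $\langle A_i u, A_j v\rangle_E = -\langle u, A_i A_j v\rangle_E$; contracting against $g^{ij}$ and invoking the definition of $|A|^2 = g^{ij} A_i A_j$ gives $(Au, Av)_M = -(u, |A|^2 v)_M$, and the analogous identity for $B$ delivers the $((|B|^2 - |A|^2)u, v)_M$ contribution. For the two cross terms, a direct local-coordinate computation suffices: writing $C = A - B$, unpacking $\langle \cdot, \cdot\rangle_E$ in components, using the skew-Hermitian identity $\overline{(C_j)_{kl}} = -(C_j)_{lk}$, and relabeling indices in $(Cu, dv)_M$ via $g^{ij} = g^{ji}$, the two integrals combine into the stated expression.

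The line bundle case is then an immediate specialization: scalar entries replace matrix entries, and the cross term rewrites as $\int_M \langle u\, d\bar v - \bar v\, du,\, B - A\rangle_g$ after one more application of $\overline{B - A} = -(B-A)$. The only real difficulty in the argument is bookkeeping—tracking the interaction between the conjugate-linearity of the Hermitian pairing in its second slot, the skew-Hermitian matrix entries of the connection forms, and the convention that $|A|^2$ is the complex bilinear (endomorphism-valued) extension of the metric rather than a norm; once those conventions are fixed, the identity is essentially forced by the two applications of \eqref{greenidentity} together with the adjoint lemma.
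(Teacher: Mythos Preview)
Your proposal is correct and follows essentially the same route as the paper: apply the Green identity \eqref{greenidentity} for the $A$-term, invoke the adjoint lemma for the $B$-term, subtract, then expand $d_A = d+A$ and use the skew-Hermitian property to identify the $|A|^2$, $|B|^2$, and cross contributions. The only cosmetic difference is that you write $(Au,Av)_M = -(u,|A|^2 v)_M$ whereas the paper writes $-(|A|^2 u,v)_M$; these coincide since $|A|^2$ is self-adjoint for the Hermitian inner product (as $(g^{ij}A_iA_j)^* = g^{ij}A_j^*A_i^* = g^{ij}A_jA_i = |A|^2$ by symmetry of $g^{ij}$).
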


\begin{proof}
As above, we have:
\begin{equation*}
(\Lambda_{A, Q_A} f, g)_{\partial{M}} = (Q_A u, v)_M + (d_{A} u, d_{A} v)_M
\end{equation*}
and similarly, where $u$ and $v$ as in the statement:
\begin{align*}
(\Lambda_{B, Q_B} f , g)_{\partial{M}} &= (f ,\Lambda_{A_B, Q^*_B} g)_{\partial{M}} \\
&=  (Q_B u , v)_M + (d_{B} u , d_{B} v)_M
\end{align*}
So we get by subtracting:
\begin{align*}
\big((\Lambda_{A, Q_A} - \Lambda_{B, Q_B}) f , g\big)_{\partial M} = \big((Q_A - Q_B)u , v\big)_M + (d_{A} u , d_{A} v)_M - (d_{B} u , d_{B} v)_M
\end{align*}
We have $(A u , A v)_{M} = - \big(|A|^2 u , v\big)_M$ and $(B u , B v)_{M} = - \big(|B|^2 u , v\big)_M$ and moreover:
\begin{align*}
\big(du , (A - B) v\big)_M + \big((A - B)u , dv\big)_M =  \int_M g^{ij} \big((A - B)_i\big)_{kl} \left( u_l \frac{\partial \bar{v}_k}{\partial x^j} - \frac{\partial u_l}{\partial x^j} \bar{v}_k \right)
\end{align*}
by the skew-Hermitian property of $A$ and $B$, where $u_l$ and $v_k$ denote the components of the vectors $u$ and $v$. By putting the pieces together, this finishes the proof.
\end{proof}


Let us now denote by $E' = M \times \mathbb{C}^{m \times m}$ the endomorphism bundle of $E$, carying the natural trace Hermitian inner product $\langle{X, Y}\rangle = \Tr(XY^*)$. Then we can naturally let the $\Lapl_{A, Q}$ operator act on matrices by matrix multiplication\footnote{Note $\Lapl_{A, Q}$ is \textit{not} the same as the connection Laplacian obtained from the standard induced connection $D_A U= dU + [A, U]$ on the endomorphism bundle.}; furthermore, one easily shows the similarly extended DN maps for $A_1$ and $A_2$ on $E'$ obtained in this way agree if and only if the usual DN maps for $A_1$ and $A_2$ agree on $E$ -- one just notices that the first claim is the same as the second one applied to all of $n$ column vectors. Therefore, we have a version of the previous identity for matrices, where by capital letter we denote a matrix instead of a vector (we will need it in Section \ref{sec4.5}):

\begin{theorem}[The identity for matrices]\label{matidentity} In the notation as in Theorem \ref{identity}, for two smooth sections $F$ and $G$ of $E'|_{\partial M}$, we have:
\begin{multline}
\Big(\big(\Lambda_{A, Q_A} - \Lambda_{B, Q_B}\big) F , G\Big)_{\partial M} = 
\Big(\big(Q_A - Q_B +|B|^2 - |A|^2\big) U , V\Big)_M \\
+ \Big(U (dV^*) - (dU) V^* , B - A\Big)_M
\end{multline}
where $U, V \in C^\infty(M; E')$ solve $\Lapl_{A, Q_A} U = 0$ with $U|_{\partial M} = F$ and $\Lapl_{B, Q^*_B} V = 0$ with $V|_{\partial M} = G$. 
\end{theorem}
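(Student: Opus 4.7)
The plan is to reduce the matrix identity to the vector identity of Theorem \ref{identity} by decomposing all matrix-valued sections column by column. First, I would observe that the operator $\Lapl_{A, Q_A}$, acting on a section $U$ of $E'$ by matrix multiplication on the left, decomposes as the column-wise application of the vector operator $\Lapl_{A, Q_A} : C^\infty(M;E) \to C^\infty(M;E)$: if $u_p$ denotes the $p$-th column of $U$, then $\Lapl_{A, Q_A} U = 0$ is equivalent to $\Lapl_{A, Q_A} u_p = 0$ for each $p = 1, \dots, m$. By the column-wise definition of the extended DN map (as noted in the paragraph preceding the theorem), this means $\Lambda_{A, Q_A} F$ is the matrix whose $p$-th column is $\Lambda_{A, Q_A} f_p$, with $f_p$ the $p$-th column of $F$. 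The analogous remark applies to $V$, $G$ and $\Lapl_{B, Q_B^*}$.

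Next, I would apply Theorem \ref{identity} to each column pair $(u_p, v_p)$ with boundary data $(f_p, g_p)$ and sum over $p$. Since the trace inner product satisfies $\Tr(X Y^*) = \sum_p \langle x_p, y_p\rangle$ for matrices with columns $x_p, y_p$, the left-hand side of the summed identity becomes precisely $\big((\Lambda_{A, Q_A} - \Lambda_{B, Q_B}) F, G\big)_{\partial M}$; the same reasoning immediately collapses the first term on the right to $\big((Q_A - Q_B + |B|^2 - |A|^2)\, U, V\big)_M$.

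The only step requiring a little care is rewriting the summed version of the gauge term in matrix form. Using $\sum_p (u_p)_l\, \partial_i \overline{(v_p)_k} = (U\, dV^*)_{lk,i}$ and $\sum_p \partial_i (u_p)_l\, \overline{(v_p)_k} = ((dU)\, V^*)_{lk,i}$, the sum becomes $\int_M g^{ij}\, ((A-B)_j)_{kl}\, \big(U\, dV^* - (dU)\, V^*\big)_{lk,i}$, which by the identity $\Tr(XY) = \sum_{kl} X_{kl} Y_{lk}$ equals $\int_M g^{ij}\, \Tr\!\big((A-B)_j (U\, dV^* - (dU)\, V^*)_i\big)$. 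Since $A, B$ are skew-Hermitian one has $(B-A)_j^* = -(B-A)_j = (A-B)_j$; combining this with cyclicity of the trace identifies the expression with the trace $L^2$ inner product $\big(U\, dV^* - (dU)\, V^*,\, B - A\big)_M$ on matrix-valued 1-forms, giving the claimed identity.

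I do not expect any genuine obstacle here: the main subtlety is simply the bookkeeping involved in matching the fibrewise pairing on $E$ (complex Hermitian on $\mathbb{C}^m$) with the trace pairing on $E'$, and correctly invoking skew-Hermicity to absorb the complex conjugation when passing from $(A-B)_j$ to $(B-A)_j^*$. One could alternatively prove the identity from scratch by redoing the integration-by-parts argument used in Theorem \ref{identity} with the matrix Green's identity, but the column-wise reduction described above avoids any repetition of calculations.
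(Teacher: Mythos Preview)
Your proof is correct. The paper takes a marginally different route: rather than applying Theorem \ref{identity} column-by-column and summing, it re-runs the integration-by-parts argument of that theorem directly for matrix-valued sections, using identities such as $(AU, dV)_M = -\big(U(dV^*), A\big)_M$ and $(dU, AV)_M = \big((dU)V^*, A\big)_M$ (both of which rely on skew-Hermicity of $A$, just as your last step does). Your column decomposition has the advantage of invoking Theorem \ref{identity} as a black box and thereby avoiding any repeated computation; the paper's approach avoids the index bookkeeping but at the cost of redoing the Green's-identity manipulation. The two arguments are of comparable length and difficulty, and your handling of the skew-Hermitian conjugation and the trace pairing is exactly the point where the two approaches converge.
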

\begin{proof}
By re-running the proof of the previous theorem, we easily obtain the result; we use the convenient matrix identities such as $(AU, dV)_{M} = -\big(U (dV^*), A\big)_M$ and $(dU, AV)_M = \big((dU)V^*, A\big)_M$.
\end{proof}

\section{Carleman estimates}\label{sec3}

The purpose of this section is to prove suitable Carleman estimates for vector valued functions. The scalar case was covered in \cite{LCW} and we generalise that approach, as expected since the principal part of $d_A^*d_A$ is diagonal.

Firstly, let us briefly explain what the limiting Carleman weights (LCW) are. These are certain functions on open Riemannian manifolds that guarantee the positivity of the conjugated Laplacian operator $P_{0, \varphi} = e^{\frac{\varphi}{h}} \Delta e^{-\frac{\varphi}{h}}$ and hence existence of solutions to equations as below. They were introduced in \cite{LCW1} for the Euclidean case and generalised to manifolds in \cite{LCW}. They have a nice geometric characterisation: in \cite{LCW} it is proved that the existence of LCW is equivalent to existence of a unit parallel vector field on the manifold (a vector field $V$ is parallel if $\nabla V = 0$, where $\nabla$ is the Levi-Civita connection). This vector field yields a Euclidean direction on the manifold -- hence, for simplicity, we will often assume our manifold to be embedded in $\mathbb{R} \times M_0$, which admits the Carleman weight $\varphi(x) = x_1$.

Moreover, one way to motivate the definition of LCWs is that its reverse engineered so that the estimates below in Theorem \ref{specslucaj} hold for both $\pm \varphi$ (the proof of the converse to this statement, i.e. that the inequality holds for both $\pm \varphi$ implies that $\varphi$ is an LCW is outlined in \cite{DSFlecturenotes}), so that the two solutions constructed in Proposition \ref{mainconstruction} with the corresponding phases equal to $\pm x_1$, cancel out in the integral identity from Theorem \ref{identity}. We state the definition of an LCW here.

\begin{definition}
Let $(U, g)$ be an open Riemannian manifold. We say $(U, g)$ admits an LCW if there exists a smooth $\varphi: U \to \mathbb{R}$, such that $d\varphi \neq 0$ and if we let $p_\varphi$ to be the semiclassical principal symbol of $P_{0, \varphi}$, then:
\begin{align*}
\{\re p_\varphi, \im p_\varphi\}(x, \xi) = 0 \quad \text{when} \quad p_\varphi(x, \xi) = 0
\end{align*}
where $\{\cdot, \cdot\}$ is the Poisson bracket on $T^*U$.
\end{definition}

In the text below, we denote by $H^1_{scl}(M; E)$ the semiclassical Sobolev space associated to the sections of the Hermitian vector bundle $E$ of rank $m$ over $M$, equipped with a connection $\nabla$, with the norm:
\begin{equation*}
\lVert{u}\rVert_{H^1_{scl}(M; E)} = \big(\lVert{u}\rVert^2_{L^2(M; E)} + h^2\lVert{\nabla u}\rVert^2_{L^2(M; T^*M \otimes E)}\big)^{\frac{1}{2}}
\end{equation*}
and by $L^2(M; E)$ the inner product space associated with the Hermitian structure and the Riemannian density. We start by proving a warm-up a priori Carleman estimate which relates the $H^1_{scl}$ and $L^2$ norms of a solution to $P_{0, \varphi} u = f$, by essentially using only elementary methods; later we will see, in order to obtain a $H^1$ solution, we have to shift the indices and prove the inequality for every $H^s_{scl}$, where $s \in \mathbb{R}$. 

Let us introduce the setting in which the theorems will be proved. We will work on $(M, g)$, a compact Riemannian manifold with boundary which is compactly contained in $(U, g)$, an open Riemannian manifold admitting a limiting Carleman weight $\varphi$; moreover, $U$ is again contained in a closed Riemannian manifold $(N, g)$, which is useful since then we do not have to worry about boundary conditions on $N$ (e.g. we may let $N$ to be the double of a neighbourhood of $M$ in $U$). Furthermore, we will work on a Hermitian vector bundle $E$ of rank $m$ over $M$, equipped with a connection $A$ and a section of $Q$ of the endomorphism bundle. We also assume there is an extension of $E$ to a bundle over $N$, denoted by the same letter and that $A$ and $Q$ extend, too.

\begin{theorem}\label{specslucaj}
Let $X$ be a smooth matrix of vector fields on $M$ and $q$ a smooth matrix function on $M$ (matrices are $m$ by $m$). Then there exists a constant $C$, such that the following inequality holds for all $u \in C^\infty_c(M^{int}; \mathbb{C}^m)$ and all sufficiently small $h > 0$:
\begin{equation}
\lVert{e^{\varphi /h} u}\rVert_{H^1_{scl}(M; \mathbb{C}^m)} \leq Ch \lVert{e^{\varphi /h} (\Delta + X + q) u}\rVert_{L^2(M; \mathbb{C}^m)}
\end{equation}
Moreover, the following inequality holds for all $u \in C^\infty_c(M^{int}; E)$, for some constant $C' > 0$:
\begin{align}
\lVert{e^{\varphi /h} u}\rVert_{H^1_{scl}(M; E)} \leq C'h \lVert{e^{\varphi /h} (d_A^*d_A + Q) u}\rVert_{L^2(M; E)}
\end{align}
\end{theorem}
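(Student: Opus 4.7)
My plan is to bootstrap from the scalar limiting-Carleman estimate of \cite{LCW} to the matrix-valued case, absorb the lower-order perturbations $X$ and $q$ using a convexified weight, and finally pass to the bundle setting by a partition-of-unity argument.

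For the matrix-valued inequality, the key observation is that the principal part $\Delta$ acts diagonally on $\mathbb{C}^m$-valued functions. I would invoke the scalar Carleman estimate from \cite{LCW},
\[
\|e^{\varphi/h} w\|_{H^1_{scl}(M)} \le C h\, \|e^{\varphi/h} \Delta w\|_{L^2(M)}, \qquad w \in C^\infty_c(M^{int}),
\]
apply it componentwise to $u = (u_1,\dots,u_m)$, and sum after squaring to obtain the corresponding bound with $\Delta + X + q$ replaced by $\Delta$. The zeroth-order term is then handled trivially: $\|e^{\varphi/h} q u\|_{L^2} \le \|q\|_\infty \|e^{\varphi/h} u\|_{H^1_{scl}}$, and the corresponding contribution $Ch\|q\|_\infty \|e^{\varphi/h} u\|_{H^1_{scl}}$ can be moved to the left-hand side for $h$ small enough.

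The main obstacle is the first-order term $X$. Setting $v = e^{\varphi/h} u$, one finds $e^{\varphi/h} X u = X v - h^{-1} (X\varphi) v$, so that $h\|e^{\varphi/h} X u\|_{L^2} \le C \|v\|_{H^1_{scl}}$ with no small prefactor, and naive absorption into the left-hand side fails. The standard fix, already employed in \cite{LCW}, is to replace $\varphi$ by the convexified weight $\tilde\varphi = \varphi + \frac{h\varphi^2}{2\varepsilon}$. A commutator/positivity argument exploiting the LCW property of $\varphi$ then gives the sharpened inequality
\[
\|e^{\tilde\varphi/h} u\|_{H^1_{scl}} \le \frac{Ch}{\sqrt{\varepsilon}}\, \|e^{\tilde\varphi/h} \Delta u\|_{L^2},
\]
with a small gain factor $\sqrt\varepsilon$ on the right. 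Since $\tilde\varphi - \varphi = O(h)$ uniformly on $M$, the two weights $e^{\varphi/h}$ and $e^{\tilde\varphi/h}$ are comparable, and choosing $\varepsilon$ small first and then $h$ small allows the first-order perturbation from $X$ to be absorbed, recovering the desired inequality with the original weight $\varphi$.

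For the bundle version, I would cover $M$ by finitely many open sets $\{U_\alpha\}$ over which $E$ admits a local unitary trivialization, pick a subordinate partition of unity $\{\chi_\alpha\}$, and apply the matrix estimate just proved to each $\chi_\alpha u \in C^\infty_c(U_\alpha;\mathbb{C}^m)$; in the chart the operator $d_A^* d_A + Q$ takes the scalar-coefficient form $\Delta + X_\alpha + q_\alpha$ as in \eqref{notation}. The commutators $[\Delta, \chi_\alpha]$ produce first-order errors in $u$ of the same type as $X$, absorbed by the same convexification gain. Summing over $\alpha$ and invoking the triangle inequality yields the global bundle-level estimate. Throughout, the sole delicate point is the absorption of the first-order term via the convexification trick; the rest is algebraic bookkeeping.
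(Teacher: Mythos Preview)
Your approach is exactly the paper's: reduce the vector case to the scalar one by the diagonal action of $\Delta$, invoke the convexified-weight Carleman estimate from \cite{LCW} to manufacture an extra small parameter, absorb the first-order term $X$ with that gain, and handle the bundle version by a partition of unity (using the convexified inequality again to swallow the commutators $[\Delta,\chi_\alpha]$). The paper simply reproves the scalar convexified estimate in-line via the $A+iB$ decomposition of $P_{0,\varphi}$ and the commutator $i[A,B]$, whereas you cite it; that is a difference in presentation, not in strategy.

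One correction worth making: your displayed convexified inequality has the $\varepsilon$ on the wrong side. The outcome of the commutator/positivity argument is
\[
\frac{h^2}{\varepsilon}\,\|v\|_{L^2}^2 \;\lesssim\; \|P_{0,\tilde\varphi}v\|_{L^2}^2,
\]
which, combined with $\|h\nabla v\|^2 \lesssim \|P_{0,\tilde\varphi}v\|^2 + \|v\|^2$, yields
\[
\|e^{\tilde\varphi/h}u\|_{H^1_{scl}} \;\le\; C\sqrt{\varepsilon}\,h\,\|e^{\tilde\varphi/h}\Delta u\|_{L^2},
\]
not $\dfrac{Ch}{\sqrt{\varepsilon}}$. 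With your formula the factor blows up as $\varepsilon\to 0$ and no absorption is possible. Your surrounding prose (``small gain factor $\sqrt{\varepsilon}$'', ``choosing $\varepsilon$ small first'') matches the correct inequality, so this looks like a slip rather than a misunderstanding; just fix the exponent and the argument goes through as you describe.
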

\begin{proof}
We prove the first inequality; the second one follows by a partition of unity argument in $N$ and applying the first inequality in the form \eqref{convexificationineq} to absorb the extra factors, since locally $d_A^*d_A + Q$ is of the form $\Delta + X + q$ (see \eqref{notation}).

Firstly, notice we have invariance under conformal scaling, i.e. observe that we have the identity:
\begin{equation*}
c^{\frac{n+2}{4}}(\Delta_g + X + q)u = (\Delta_{c^{-1}g} + cX + q_c)(c^{\frac{n-2}{4}} u)
\end{equation*}
where $q_c = cq - \frac{n-2}{4}Xc + c^{\frac{n+2}{4}} \Delta(c^{\frac{n-2}{4}})$, by using the conformal properties of the Laplacian. By taking $h$ small enough, we easily deduce that without loss of generality we are free to pick the conformal constant. 

We can now assume that $\nabla \varphi$ has unit norm, as conformal scalings preserve the property of being a LCW. In other words we may assume that the function $\varphi$ is a \textit{distance function}, i.e. we have $|\nabla \varphi| = 1$ and $D^2 \varphi = 0$, where $D$ is the Levi-Civita covariant derivative (see Lemma 2.5 in \cite{LCW}).\footnote{In \cite{LCW} it is also proved that a distance function is also an LCW if and only if $\varphi(\exp_x v) = \varphi(x) + \langle{\nabla \varphi(x), v}\rangle$; in particular, this means that we have a lot of LCWs in the Euclidean spaces, by letting $\varphi (x) = \rho \cdot x$ for a vector $\rho$.}

We can further assume that $q = 0$, since the corresponding factor can be absorbed to the left hand side for $h$ small enough.

In this step we show the inequality under the additional assumption that $X = 0$. Recall the following identity, with the specific expansion we will make use of later:
\begin{gather*}
P_{0, \varphi} = e^{\varphi /h} h^2 \Delta_g e^{-\varphi/h} = \underbrace{h^2 \Delta - |\nabla \varphi|^2}_{A} + \underbrace{2\langle{\nabla \varphi, h \nabla}\rangle - h\Delta \varphi}_{i B}
\end{gather*}
Hence, we can build the following estimates (we leave out the $L^2$ subscript for convenience):
\begin{align*}
(P_{0, \varphi} v, v) = h^2 (\Delta v, v) - (|\nabla \varphi|^2 v, v) + 2h (\langle{\nabla \varphi, \nabla v}\rangle, v) - h(\Delta \varphi v, v)
\end{align*}
By using the fact that $\int{\langle{df, dg}\rangle} = (\Delta f, g)$ for $f$ and $g$ compactly supported, we get:
\begin{gather*}
\lVert{h \nabla v}\rVert^2 = (P_{0, \varphi} v, v) + (|\nabla \varphi|^2 v, v) - 2h(\langle{\nabla \varphi, \nabla v}\rangle, v) + h(\Delta \varphi v, v)
\end{gather*}
Therefore, we finally have, using Cauchy-Schwartz and AM-GM:
\begin{align*}
\lVert{h \nabla v}\rVert^2 &\leq \lVert{P_{0, \varphi} v}\rVert \lVert{v}\rVert + \lVert{v}\rVert^2 + 2 \lVert{v}\rVert \lVert{h \nabla v}\rVert + h |\sup \Delta \varphi| \lVert{v}\rVert^2 \\
&\leq \frac{1}{2} \lVert{P_{0, \varphi} v}\rVert^2 + \frac{1}{2} \lVert{v}\rVert^2 + \frac{1}{\epsilon} \lVert{v}\rVert^2 + \epsilon \lVert{h \nabla v}\rVert^2
\end{align*}
So for some $C_1$ and sufficiently small $\epsilon$:
\begin{align*}
\lVert{h \nabla v}\rVert^2 \leq \lVert{P_{0, \varphi} v}\rVert^2 + C_1 \lVert{v}\rVert^2
\end{align*}
Therefore, it suffices to prove $\lVert{v}\rVert \leq C_2 h^{-1} \lVert{P_{0, \varphi} v}\rVert$ for some $C_2$. 

Now, we claim that in the above expansion of $P_{0, \varphi}$, the parts $A$ and $B$ are formally self-adjoint. The proof is not too hard, but we give one for completeness. The bilinear map $\langle{\cdot, \cdot}\rangle$ we use is complex bilinear; also, formal self-adjointness means $(P \varphi, \psi) = (\varphi, P \psi)$ for all smooth compactly supported functions $\varphi$ and $\psi$. We have, for $m = 1$:
\begin{gather*}
\big((h^2\Delta - |\nabla \varphi|^2)u, v\big) = \big(u, (h^2\Delta - |\nabla \varphi|^2) v\big)
\end{gather*}
for all $u, v \in C_c^{\infty}(M^{int})$ because $\varphi$ is real and $\Delta$ is self-adjoint. Moreover, we have:
\begin{multline*}
\Big(2\langle{\nabla \varphi, h \nabla u}\rangle, v\Big) = 2h \Big(\langle{d \varphi, du}\rangle, v\Big) = 2h \int{\langle{d \varphi, \bar{v} du}\rangle} \\
= 2h \int{\langle{d \varphi, d(u \bar{v}) - u d\bar{v}}\rangle} = 2h \int{\Delta \varphi u \bar{v}} - 2h \Big(u, \langle{\nabla \varphi, \nabla v}\rangle\Big)
\end{multline*}
and $\Big(h \Delta \varphi u, v\Big) = h \Big(u, \Delta \varphi v\Big)$. Therefore, by combining the two results:
\begin{multline*}
\Big(2 \langle{\nabla \varphi, h \nabla u}\rangle - h \Delta \varphi u, v\Big) = 2h \Big( \Delta \varphi u, v\Big) - h \Big(\Delta \varphi u, v\Big) - 2h \Big(u, \langle{\nabla \varphi, \nabla v}\rangle\Big)\\
= - \Big(u, (-h \Delta \varphi v + 2h \langle{\nabla \varphi, \nabla v}\rangle)\Big) = - \Big(u, iB v\Big)
\end{multline*}
which finally implies that $A$ and $B$ are formally self-adjoint in the scalar case. For the $m > 1$ case we just observe that the action of the Laplacian $\Delta$ extends diagonally to vector valued functions and the inner product $\langle{u, v}\rangle = \sum u_i \bar{v}_i$ splits nicely with respect to this action, so we can simply sum over components.

We will now make use of the following identity:
\begin{align*}
\lVert{P_{0, \varphi} v}\rVert^2 = (P_{0, \varphi} v, P_{0, \varphi} v) = \big((A + iB)v, (A + iB)v\big) = \lVert{Av}\rVert^2 + \lVert{Bv}\rVert^2 + (i[A, B] v, v)
\end{align*}
The idea is to use the positivity of the principal symbol to deduce the positivity of the last term in the expression above. We first need to use a \textit{convexification} argument (see \cite{LCW}), where we slightly perturb $\varphi$ by a convex function. Namely, we consider a function $f: \mathbb{R} \to \mathbb{R}$ and the composition $\tilde{f} = f \circ \varphi$. Then we have:
\begin{itemize}
\item $P_{0, \tilde{f}} = \tilde{A} + i\tilde{B}$, according to the above decomposition.
\item $\nabla(f \circ \varphi) = f' \circ \varphi \nabla \varphi$.
\item $D^2(f \circ \varphi) = D(f' \circ \varphi d\varphi) = d(f' \circ \varphi) \otimes d\varphi + f' \circ \varphi D(d \varphi) = f'' \circ \varphi d\varphi \otimes d\varphi + f' \circ \varphi D^2 \varphi$, where we used the fact that $\varphi$ is a distance function.
\end{itemize}
Now we quote Lemma 2.3 from \cite{LCW}, which computes the Poisson bracket of the principal symbols of $A$ and $B$, which are respectively denoted as $a$ and $b$:
\begin{align*}
\{a, b\}(x, \xi) = 4D^2 \varphi(\xi^{\#}, \xi^{\#}) + 4D^2 \varphi(\nabla \varphi, \nabla \varphi)
\end{align*}
where we have the expressions $a = |\xi|^2 - |d \varphi|^2 = |\xi^{\# }|^2 - |\nabla \varphi|^2$ and $b = 2\langle{\nabla \varphi, \xi^{\#}}\rangle$. By $\alpha^{\#}$ we denote the unique element of $T_pM$ such that $\alpha(v) = \langle{\alpha^{\#}, v}\rangle$ for all $v$. With this notation, $a + ib = p_{\varphi}$ is the principal symbol of $P_{0, \varphi}$ in the standard semiclassical quantisation. Using the result of this lemma, we have for $m = 1$:
\begin{align*}
\{\tilde{a}, \tilde{b}\} (x, \xi) &= 4(f'' \circ \varphi) \langle{\nabla \varphi, \xi^{\#}}\rangle^2 + 4(f'' \circ \varphi) (f' \circ \varphi)^2 |\nabla \varphi|^4\\
&= 4(f'' \circ \varphi) (f' \circ \varphi)^2 + \underbrace{(f'' \circ \varphi)(f' \circ \varphi)^{-2}}_{\beta} \tilde{b}^2
\end{align*}
where $\tilde{b} = 2 \langle{d(f \circ \varphi), \xi}\rangle = 2(f' \circ \varphi) \langle{\nabla \varphi, \xi^{\#}}\rangle$. So, we must have
\begin{align*}
i[\tilde{A}, \tilde{B}] = 4h (f'' \circ \varphi)(f' \circ \varphi)^2 + h \tilde{B} \beta \tilde{B} + h^2 R
\end{align*}
where $R$ is first order semiclassical differential operator. Now we pick $f$ such that:
\begin{itemize}
\item $f(s) = s + \frac{h}{2\epsilon} s^2$, $f'(s) = 1 + \frac{h}{\epsilon} s$ and $f'' = \frac{h}{\epsilon}$.
\item Take $1 \geq \epsilon_0 \geq \frac{h}{\epsilon} > 0$ small enough such that $f' > 1/2$ on $\varphi(M)$ and denote $\varphi_{\epsilon} = f \circ \varphi$. One can check that the coefficients of $R$ are uniformly bounded with respect to $h$ and $\epsilon$, and $\beta = \frac{h/\epsilon}{(1 + \frac{h}{\epsilon}s)^2}$ is uniformly bounded.
\end{itemize}

Namely, one has:
\begin{align*}
\Big(i [\tilde{A}, \tilde{B}]v, v\Big) &= \Big(4 \frac{h^2}{\epsilon}(f' \circ \varphi)^2v + h\tilde{B} \big(\frac{h/\epsilon}{(f' \circ \varphi)^2} \tilde{B}v \big), v\Big) + h^2 \Big(Rv, v\Big)\\
&\geq \frac{h^2}{\epsilon} \lVert{v}\rVert^2 - C_0h \lVert{\tilde{B}v}\rVert^2 - C_0 h^2 \lVert{v}\rVert_{H^1_{scl}}\lVert{v}\rVert_{L^2}
\end{align*}
because $\lVert{Rv}\rVert \leq C_0 \lVert{v}\rVert_{H^1_{scl}}$. The previous inequality hold for $m > 1$, as $[\tilde{A}, \tilde{B}]$ acts diagonally, so $\Big(i[\tilde{A}, \tilde{B}] v, v\Big)_{L^2(\mathbb{C}^m)} = \sum \Big(i[\tilde{A}, \tilde{B}]v_j, v_j\Big)_{L^2}$.

Using the inequality $\lVert{h \nabla v}\rVert^2 \leq \lVert{P_{0, \varphi_{\epsilon}} v}\rVert^2 + C_1 \lVert{v}\rVert^2$, we conclude:
\begin{align}\label{njn2}
\Big(i[\tilde{A}, \tilde{B}] v, v\Big) \geq \frac{h^2}{\epsilon} (1 - C_4 \epsilon)\lVert{v}\rVert^2 - C_3 h \lVert{\tilde{B} v}\rVert^2 - C_3 \lVert{P_{0, \varphi_{\epsilon}} v}\rVert^2
\end{align}
by employing $\lVert{v}\rVert_{H^1_{scl}} = \lVert{v}\rVert_{L^2} + \lVert{h \nabla v}\rVert_{L^2} \leq C_1'\cdot \lVert{v}\rVert_{L^2} + \lVert{P_{0, \varphi_{\epsilon}} v}\rVert_{L^2}$ and AM-GM. Hence, we finally get the inequality:
\begin{gather}\label{njn1}
(1 + C_3) \lVert{P_{0, \varphi_{\epsilon}} v}\rVert^2 \geq \lVert{\tilde{A} v}\rVert^2 + (1 - C_3h)\lVert{\tilde{B} v}\rVert^2 + \frac{h^2}{\epsilon} (1 - C_4 \epsilon) \lVert{v}\rVert^2
\end{gather}

Let us now turn to the case $X \neq 0$ -- we want to incorporate it into the inequality \eqref{njn1} and to estimate it in a suitable way. Note that we have $h^2 X_{\varphi_{\epsilon}} = h e^{\varphi_{\epsilon}/h} X e^{-\varphi_{\epsilon}/h} = h^2 X - h f' \circ \varphi X \varphi$. Thus we have:
\begin{itemize}
\item $\lVert{h^2 X v}\rVert_{L^2} = \lVert{h^2 \langle{X, \nabla v}\rangle}\rVert_{L^2} \leq h |X|_{L^{\infty}} \lVert{h \nabla v}\rVert_{L^2} \leq C_2' \cdot h \lVert{v}\rVert_{H^1_{scl}}$
\item $\lVert{h(f' \circ \varphi) X (\varphi) v}\rVert_{L^2} \leq C_3'\lVert{v}\rVert_{L^2}$

\end{itemize}
By combining the two inequalities above, we conclude, by using \eqref{njn2}:
\begin{gather*}
\lVert{h^2 X_{\varphi_{\epsilon}} v}\rVert \lesssim h \lVert{v}\rVert_{H^1_{scl}} \lesssim h(\lVert{v}\rVert_{L^2} +\lVert{P_{0, \varphi} v}\rVert_{L^2})
\end{gather*}
which in turn implies the following chain of inequalities:
\begin{align*}
2(1 + C_3) \lVert{P_{0, \varphi_{\epsilon}}v + h^2X_{\varphi_{\epsilon}}v}\rVert^2 &\geq \frac{4}{3}(1+ C_3)\lVert{P_{0, \varphi_{\epsilon}}v}\rVert^2 - C_5 h^2 \Big( \lVert{v}\rVert^2 + \lVert{P_{0, \varphi_\epsilon} v}\rVert^2\Big)\\
&\geq \lVert{v}\rVert^2 \Big(\frac{h^2}{\epsilon}(1 - C_6\epsilon)\Big)
\end{align*}
where $C_6 = C_4 + C_5$. So for $\epsilon$ small enough, there exists $C_7$ such that:
\begin{gather}\label{convexificationineq}
C_7 \lVert{P_{0, \varphi_{\epsilon}} v + h^2 X_{\varphi_{\epsilon}} v}\rVert^2 \geq \frac{h^2}{\epsilon} \lVert{v}\rVert^2
\end{gather}
Therefore we have for $u = e^{-\varphi_{\epsilon}/h} v$: 
\begin{gather*}
C_{\epsilon} h^2 \lVert{e^{\frac{\varphi^2}{2 \epsilon}} e^{\frac{\varphi}{h}} (\Delta + X) u}\rVert^2 \geq \lVert{e^{\frac{\varphi^2}{2\epsilon}} e^{\frac{\varphi}{h}} u}\rVert^2
\end{gather*}
which together with $1 \leq e^{\frac{\varphi^2}{2 \epsilon}} \leq C_\epsilon'$ implies the result.
\end{proof}
\begin{rem}\rm(Carleman estimates with a boundary term).\label{partialboundary}
We record a corollary of the above inequality for functions not necessarily supported in the interior of our manifold; this extends the inequality (2.13) from \cite{MagU} to the higher rank case. Let $v \in C^{\infty}(M; \mathbb{C}^m) \cap H^1_0(M; \mathbb{C}^m)$ -- then we claim that the following inequality holds:
\begin{align}\label{Carlemanboundary0}
\lVert{v}\rVert^2_{H^1_{scl}} \lesssim h^2 \lVert{e^{\frac{\varphi}{h}} (\Delta + X + q)e^{-\frac{\varphi}{h}} v}\rVert^2 + h(\partial_\nu \varphi \partial_\nu v, \partial_\nu v)_{\partial M}
\end{align}
This is an exercise in partial integration and using the condition that $v|_{\partial M} = 0$ to get rid of the extra factors. Namely, what we get in the above notation is:
\begin{align*}
\lVert{(A + iB)v}\rVert^2 &= \lVert{Av}\rVert^2 + \lVert{Bv}\rVert^2 + i(Bv, Av) - i(Av, Bv)\\
&= \lVert{Av}\rVert^2 + \lVert{Bv}\rVert^2 + i([A, B]v, v) - 2h^3(\partial_\nu \varphi \partial_\nu v, \partial_\nu v)_{\partial M}
\end{align*}
by using $(AB v, v) - (Bv, Av) = -2i h^3 (\partial_\nu \varphi \partial_\nu v, \partial_\nu v)_{\partial M}$ and $(Bu, v) = (u, Bv)$ since $v$ vanishes at the boundary. For the proof of the first equality we use the Green's identity and for the second, we use the formula \eqref{greenidentity}. The proof then proceeds exactly the same way as before, by bounding the extra $X$ factor in the equation and using the positivity of $i([A, B]v, v)$.

Finally, let us recast the inequality \eqref{Carlemanboundary0} in the following form, by letting $u = e^{-\frac{\varphi}{h}} v$ and noticing that on $\partial M$ we have $\partial_\nu u = e^{-\frac{\varphi}{h}} \partial_\nu v$, since $v \in H^1_0(M)$:
\begin{multline}\label{Carlemanboundary1}
\lVert{e^{\frac{\varphi}{h}} u}\rVert_{H^1_{scl}(M; \mathbb{C}^m)} + \sqrt{h}\lVert{\sqrt{-\partial_\nu \varphi} e^{\frac{\varphi}{h}} \partial_\nu u}\rVert_{L^2(\partial M_-; \mathbb{C}^m)}\\
\lesssim h\lVert{e^{\frac{\varphi}{h}} (\Delta + X + q) u}\rVert_{L^2(M; \mathbb{C}^m)} + \sqrt{h}\lVert{\sqrt{\partial_\nu \varphi} e^{\frac{\varphi}{h}} \partial_\nu u}\rVert_{L^2(\partial M_+; \mathbb{C}^m)}
\end{multline}
where we use the notation $\partial M_{\pm} = \{x \in \partial M \mid \pm \partial_\nu \varphi (x) \geq 0\}$. By generalising appropriately, we have a version of this inequality for an arbitrary vector bundle on $M$.
\end{rem}
Now we turn to the proof of inequalities similar to the ones from Theorem \ref{specslucaj}, but with shifted indices of the Sobolev spaces, which is actually necessary to obtain the wanted solvability estimates. This is done using the semiclassical pesudodifferential calculus (see \cite{Zworski}).

Before we start, let us briefly introduce the Sobolev spaces for a real parameter, in a coordinate invariant way. This is described in more detail in \cite{aubin}. It is a known fact that the connection Laplacian on a compact Riemannian manifold (without boundary) is essentially self-adjoint on the dense subspace $C^{\infty}(N; E) \subset L^2(N; E)$ (more generally, this holds for any elliptic differential operator on $E$), meaning that the closure of $\Lapl_A$ is equal to the adjoint $\Lapl_A^*$. 

Then by applying the spectral theorem for unbounded densely defined operators and since $\Lapl_A$ is positive, we can define the semiclassical Bessel potentials $J_A^s = (1 - h^2\Delta_A)^{\frac{s}{2}}$ for $s \in \mathbb{R}$ (here $\Delta_A = -\Lapl_A$). The functional calculus from the spectral theorem also gives us that $J_A^sJ_A^t = J_A^{s + t}$ and $J_A^s$ commutes with any function of the connection Laplacian $\Lapl_A$. Moreover, it is well-known that a function of a semiclassical PDO is again a semiclassical PDO (see Chapter 8 in \cite{DS99}); thus $J_A^s$ is a semiclassical PDO of order $s$. Finally, we define the semiclassical Sobolev spaces $H^s_{scl}$ as the completion of the $C^{\infty}(N; E)$ in the norm given by:
\begin{align*}
\lVert{u}\rVert_{H^s_{scl}(N; E)} = \lVert{J_A^s u}\rVert_{L^2(N; E)}
\end{align*}
One can easily check that the dual of $H^s_{scl}(N; E)$ may be isometrically identified with the $H^{-s}_{scl}(N; E)$. Similarly, we may define the usual semiclassical Sobolev space, by introducing the semiclassical Bessel potentials $J^s = (1 - h^2\Delta)^{\frac{s}{2}}$ which define the spaces $H_{scl}^s(N)$; we extend $J^s$ to act diagonally on $C^\infty(N; \mathbb{C}^m)$.

Next, observe that we have the following commutator estimates for sections of $E$. Let $\psi$, $\chi \in C^{\infty}_c (N)$ with $\chi = 1$ near $\text{supp}(\psi)$ and consider any $s, \alpha, \beta \in \mathbb{R}$, and $K \in \mathbb{N}$ -- then we can find $C_K > 0$ such that:
\begin{align}\label{pseudolocalscl}
\lVert{(1 - \chi)J_A^s(\psi u)}\rVert_{H^{\alpha}_{scl}(N; E)} \leq C_K h^K \lVert{u}\rVert_{H^{\beta}_{scl}(N; E)}
\end{align}
This follows from the pseudolocality of the semiclassical PDOs and the mapping properties of semiclassical PDOs on Sobolev spaces. Moreover, we record another commutator estimate:
\begin{align}\label{commineq}
\lVert{[D, J_A^s]u}\rVert_{L^2(N; E)} \leq C h \lVert{u}\rVert_{H^{s}_{scl}(N; E)}
\end{align}
where $D$ is a first order, diagonal semiclassical differential operator in $E$ over $N$; this follows from the formula for the symbol of the commutator of two semiclassical PDOs (see \cite{Zworski}).


For what follows, assume that the LCW $\varphi$ is a smooth function in a neighbourhood of $\overline{U}$ and extend this function smoothly to $N$. We are now ready to shift the indices of the Sobolev estimates from Theorem \ref{specslucaj}:

\begin{theorem}
Under the assumptions of Theorem \ref{specslucaj} and given $s \in \mathbb{R}$, there exist constants $C_s$ and $h_s > 0$ such that for all $0 < h \leq h_s$ and $u \in C^{\infty}_c (M^{int}; \mathbb{C}^m)$:
\begin{gather*}
\lVert{e^{\frac{\varphi}{h}} u}\rVert_{H^{s+1}_{scl}(N; \mathbb{C}^m)} \leq C_s h \lVert{e^{\frac{\varphi}{h}}(\Delta + X + q)u}\rVert_{H^s_{scl} (N; \mathbb{C}^m)}
\end{gather*}
Moreover, there are corresponding constants such that for every $u \in C^{\infty}_c (M^{int}; E)$:
\begin{align*}
\lVert{e^{\frac{\varphi}{h}} u}\rVert_{H^{s+1}_{scl}(N; E)} \leq C_s' h \lVert{e^{\frac{\varphi}{h}}\Lapl_{A, Q}u}\rVert_{H^s_{scl} (N; E)}
\end{align*}
\end{theorem}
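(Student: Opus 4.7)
The plan is to reduce the shifted-index estimate to the $s=0$ estimate of Theorem \ref{specslucaj} via the semiclassical Bessel potential $J^s = (1-h^2\Delta)^{s/2}$ (with $J_A^s$ playing the analogous role for the bundle version). Set $v := e^{\varphi/h} u$ and $P_\varphi := e^{\varphi/h} h^2(\Delta + X + q) e^{-\varphi/h}$. First, observe that the proof of Theorem \ref{specslucaj} uses only integration by parts on $N$ (not the specific containment $\operatorname{supp}(u) \subset M^{\text{int}}$), so it in fact delivers
\[
h\lVert{w}\rVert_{H^1_{scl}(N; \mathbb{C}^m)} \leq C \lVert{P_\varphi w}\rVert_{L^2(N; \mathbb{C}^m)}, \qquad w \in C_c^\infty(U; \mathbb{C}^m),
\]
with $A, Q$ extended to $N$ as stipulated in the setup.

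The key device is to apply this Carleman estimate not to $P_\varphi$ but to the conjugated operator
\[
\tilde P_\varphi := J^s P_\varphi J^{-s} = P_\varphi + J^s [P_\varphi, J^{-s}] = P_\varphi + h R_s,
\]
where $R_s \in \Psi^1_{scl}(N)$ has symbol bounds uniform in $h$ (from the semiclassical composition/commutator formulae of \cite{Zworski}: $J^{-s} \in \Psi^{-s}_{scl}$ and $[P_\varphi, J^{-s}] \in h\Psi^{1-s}_{scl}$). Since $\tilde P_\varphi$ has the same semiclassical principal symbol as $P_\varphi$, and since $\lVert{h R_s w}\rVert_{L^2} \lesssim h\lVert{w}\rVert_{H^1_{scl}}$ matches exactly the bound used to absorb the matrix vector field $X$ in the proof of Theorem \ref{specslucaj}, re-running that proof verbatim with $\tilde P_\varphi$ in place of $P_\varphi$ yields
\[
h\lVert{w}\rVert_{H^1_{scl}} \leq C_s \lVert{\tilde P_\varphi w}\rVert_{L^2}, \qquad w \in C_c^\infty(U; \mathbb{C}^m).
\]

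Fix $\chi \in C_c^\infty(U)$ equal to $1$ on an open neighbourhood of $M$, set $\tilde v := J^s v$, and apply the estimate to $w = \chi \tilde v$. By construction $\tilde P_\varphi \tilde v = J^s P_\varphi v$, hence
\[
\tilde P_\varphi(\chi \tilde v) = \chi J^s P_\varphi v + [\tilde P_\varphi, \chi]\tilde v,
\]
and $\lVert{\chi J^s P_\varphi v}\rVert_{L^2} \leq \lVert{P_\varphi v}\rVert_{H^s_{scl}}$. The symbol of $[\tilde P_\varphi, \chi]$ is supported in $\operatorname{supp}(d\chi)$, disjoint from $\operatorname{supp}(v)$; choosing $\psi \in C_c^\infty(M^{\text{int}})$ with $\psi \equiv 1$ on $\operatorname{supp}(v)$ and with $\chi \equiv 1$ near $\operatorname{supp}(\psi)$, the pseudolocality estimate \eqref{pseudolocalscl} gives $\lVert{[\tilde P_\varphi, \chi]\tilde v}\rVert_{L^2} = O(h^\infty)\lVert{v}\rVert_{H^\beta_{scl}}$ for every $\beta$, and likewise $\lVert{(1-\chi)\tilde v}\rVert_{H^1_{scl}} = O(h^\infty)\lVert{v}\rVert_{H^\beta_{scl}}$. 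Since $\lVert{v}\rVert_{H^{s+1}_{scl}} = \lVert{\tilde v}\rVert_{H^1_{scl}}$, combining these gives
\[
h\lVert{v}\rVert_{H^{s+1}_{scl}} \leq C_s \lVert{P_\varphi v}\rVert_{H^s_{scl}} + O(h^\infty) \lVert{v}\rVert_{H^{s+1}_{scl}}.
\]
For $h \leq h_s$ small enough the error is absorbed, and substituting $\lVert{P_\varphi v}\rVert_{H^s_{scl}} = h^2 \lVert{e^{\varphi/h}(\Delta+X+q)u}\rVert_{H^s_{scl}}$ yields the claimed estimate. The bundle version is entirely analogous, using $J_A^s$ (whose principal symbol is $(1+|\xi|^2)^{s/2}$ times the identity on fibres) in place of $J^s$, and \eqref{pseudolocalscl} exactly as stated.

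The main obstacle is confirming that the proof of Theorem \ref{specslucaj} truly accommodates the pseudodifferential perturbation $hR_s$, since the self-adjoint decomposition $P_{0,\varphi} = A + iB$ used there is a priori set up for differential operators. However, that proof exploits only positivity of the principal symbol, which is invariant under conjugation by the elliptic $J^s$, together with uniform bounds of the form $\lVert{(\text{first-order semiclassical PDO})\,w}\rVert_{L^2} \lesssim h\lVert{w}\rVert_{H^1_{scl}}$ to absorb any first-order correction. Since $R_s$ satisfies exactly this bound, the convexification and commutator-positivity estimates \eqref{njn1}--\eqref{convexificationineq} go through with constants depending on $s$ only through the symbol norms of $R_s$.
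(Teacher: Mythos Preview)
Your approach and the paper's are essentially the same: both apply the $H^1_{scl}$ Carleman estimate to $\chi J^s(e^{\varphi/h}u)$, use pseudolocality to dispose of the cutoff, and absorb the commutator with $J^s$ via the small convexification parameter $\epsilon$. The only organisational difference is that you package the $J^s$-commutator into a perturbation $hR_s$ of $P_\varphi$ \emph{before} invoking the Carleman estimate, whereas the paper applies the convexified estimate first and commutes afterwards.

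One point deserves care, however. You conjugate $P_\varphi$ by $J^s$ and claim the proof of Theorem~\ref{specslucaj} can be re-run for $\tilde P_\varphi = P_\varphi + hR_s$. But that proof achieves absorption of first-order perturbations only through the convexified weight $\varphi_\epsilon$: it is the gain $h^2/\epsilon$ in \eqref{convexificationineq} that beats the $O(h^2)$-sized error. If you literally convexify $\tilde P_\varphi$ by conjugating with $e^{\varphi^2/(2\epsilon)}$, the resulting perturbation $e^{\varphi^2/(2\epsilon)} R_s\, e^{-\varphi^2/(2\epsilon)}$ acquires operator bounds that blow up like $e^{C/\epsilon}$, and you can no longer take $\epsilon$ small to absorb. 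The paper sidesteps this by never leaving the $\varphi_\epsilon$ level: it commutes $J^s$ with $P_{\varphi_\epsilon}$ directly, whose coefficients are uniformly bounded (since $h/\epsilon \le \epsilon_0$), so that $[P_{\varphi_\epsilon}, J^s] \in h\Psi^s$ with $\epsilon$-independent seminorms; the explicit $\sqrt\epsilon$ prefactor from \eqref{convexificationineq} then absorbs it cleanly. Your argument becomes watertight if you conjugate $P_{\varphi_\epsilon}$ rather than $P_\varphi$ by $J^s$, which is precisely the paper's computation.
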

\begin{proof}
We closely follow the proof of Lemma 4.3 from \cite{LCW}. Let us introduce $P_{\varphi_\epsilon} = e^{\frac{\varphi_\epsilon}{h}} h^2(\Delta + X + q) e^{-\frac{\varphi_\epsilon}{h}}$ and let $\chi \in C_0^\infty(U)$ such that $\chi = 1$ near $M$; here $\varphi_\epsilon$ comes from the proof of Theorem \ref{specslucaj}. Then we have by \eqref{convexificationineq} and \eqref{pseudolocalscl}:
\begin{align*}
h\lVert{u}\rVert_{H^{s+1}_{scl}} &\leq h\lVert{\chi J^s u}\rVert_{H^1_{scl}} + h \lVert{(1 - \chi)J^s u}\rVert_{H^1_{scl}}\\
&\lesssim \sqrt{\epsilon}\lVert{P_{\varphi_\epsilon}(\chi J^s u)}\rVert_{L^2} + h^2 \lVert{u}\rVert_{H^{s+1}_{scl}}
\end{align*}
which means that the second term may be absorbed to the left hand side for small $h$. Furthermore, for some $\chi' \in C_0^\infty(U)$ with $\chi' = 0$ near $M$, by \eqref{convexificationineq} again:
\[\lVert{[P_{\varphi_\epsilon}, \chi]J^s u}\rVert_{L^2} = \lVert{[P_{\varphi_\epsilon}, \chi] \chi' J^s u}\rVert_{L^2} \lesssim \lVert{\chi' J^s u}\rVert_{H^1_{scl}} \lesssim h^2 \lVert{u}\rVert_{H^{s+1}_{scl}}\]
so after absorbing the remaining factors, we have:
\[h\lVert{u}\rVert_{H^{s+1}_{scl}} \lesssim \sqrt{\epsilon} \lVert{J^s P_{\varphi_\epsilon} u}\rVert_{L^2} + \sqrt{\epsilon} \lVert{[P_{\varphi_\epsilon}, J^s] u}\rVert_{L^2}\]
The first term gives the right bound; for the second one, by expanding the operator and putting $X_{\varphi_\epsilon} = e^{\frac{\varphi_\epsilon}{h}} X e^{-\frac{\varphi_\epsilon}{h}}$, we have:
\[P_{\varphi_\epsilon} = h^2 \Delta - |d\varphi_\epsilon|^2 + 2 \langle{d \varphi_\epsilon, h d(\cdot)}\rangle - h \Delta \varphi_\epsilon + h^2 X_{\varphi_\epsilon} + h^2 q =: h^2\Delta + P_1\]
Since $[h^2\Delta, J^s] = 0$ and since $J^s$ acts diagonally, by the composition formula we have $[J^s, P_1] = hR_1$ where $R_1$ a semiclassical PDO of order $s$. Thus by taking $\epsilon$ to be small enough (and such that $h \leq \epsilon \epsilon_0$), we may absorb this remainder to the left hand side.

For an arbitrary vector bundle, note that all the steps above work the same with $J^s_A$ instead of $J^s$, until the estimate for $\lVert{[P_{\varphi_\epsilon}, J_A^s] u}\rVert_{L^2}$. In local coordinates, we have the expansion
\[e^{\frac{\varphi_\epsilon}{h}} h^2 \Lapl_{A, Q} e^{-\frac{\varphi_\epsilon}{h}} = h^2 \Lapl_A - \underbrace{(|d\varphi_{\epsilon}|^2 - 2 \langle{d \varphi_\epsilon, h d(\cdot)}\rangle + h \Delta \varphi_\epsilon)}_{D} + \underbrace{2h\langle{A, d\varphi_\epsilon}\rangle}_{P_2}\]
where $D$ is a diagonal first order semiclassical differential operator. Now observe that $[\Lapl_A, J^s_A] = 0$ and also that locally the symbol of $[D, J^s_A]$ is in $hS^s$, and so is the symbol of $[P_2, J^s_A]$. This implies that $[-D + P_2, J^s_A]$ is in $h \Psi^s(M)$, which makes us able to absorb the extra factor for small enough $\epsilon$ and finish the proof.
\end{proof}

Essentially the only case that we will use in the previous theorem is the case $s = - 1$; it appears that it is necessary in the following result, to establish the existence of an $H^1$ solution to our equation with a suitable norm estimate (otherwise, with Theorem \ref{specslucaj} we would only get solutions in $L^2$ with bounds in $H^{-1}$ norm). It is left without a proof, since it is well-known and formally follows from the scalar case in Theorem 4.4 in \cite{LCW}.
\begin{theorem}\label{carleman}
Given a connection $A$ and an endomorphism $Q$ of $E$, there exists a positive constant $h_0$ such that for any $0 < h \leq h_0$ and any section $f \in L^2(M; E)$, there exists a solution $u \in H^1(M; E)$ to the equation $e^{\frac{\varphi}{h}}\Lapl_{g, A, Q} e^{-\frac{\varphi}{h}} u = f$ satisfying:
\begin{align*}
\lVert{u}\rVert_{H^1_{scl}(M; E)} \leq Ch \lVert{f}\rVert_{L^2(M; E)}
\end{align*}
\end{theorem}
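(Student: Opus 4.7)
The strategy is to deduce this solvability result from the shifted Carleman estimate of the previous theorem (taken with shift $s=-1$) applied to the formal adjoint, via a standard Hahn--Banach / Riesz duality argument. Since $\varphi$ is an LCW if and only if $-\varphi$ is, and since the unitarity of $A$ implies that the $L^2(M;E)$ formal adjoint of $\Lapl_{g,A,Q}$ equals $\Lapl_{g,A,Q^*}$, the $L^2$ formal adjoint of $P_\varphi := e^{\varphi/h}\Lapl_{g,A,Q}e^{-\varphi/h}$ is $P_\varphi^* = e^{-\varphi/h}\Lapl_{g,A,Q^*}e^{\varphi/h}$. Applying the previous theorem with weight $-\varphi$, operator $\Lapl_{g,A,Q^*}$ and $s=-1$, and substituting $v = e^{-\varphi/h} u$ to remove the exponential weights from both sides, yields the dual Carleman estimate
\[
\lVert v\rVert_{L^2(N;E)} \leq Ch\,\lVert P_\varphi^* v\rVert_{H^{-1}_{scl}(N;E)}, \qquad v \in C_c^\infty(M^{int};E).
\]

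The solution is then constructed by duality. Define the linear functional $L$ on the subspace $V := P_\varphi^*\bigl(C_c^\infty(M^{int};E)\bigr) \subset H^{-1}_{scl}(N;E)$ by $L(P_\varphi^* v) = (v,f)_{L^2(M;E)}$. The estimate above implies that $P_\varphi^*$ is injective on $C_c^\infty(M^{int};E)$, so $L$ is well-defined; Cauchy--Schwarz combined with the same estimate gives
\[
|L(P_\varphi^* v)| \leq \lVert v\rVert_{L^2}\,\lVert f\rVert_{L^2} \leq Ch\,\lVert f\rVert_{L^2}\,\lVert P_\varphi^* v\rVert_{H^{-1}_{scl}(N;E)},
\]
so $L$ is bounded on $V$ with operator norm at most $Ch\,\lVert f\rVert_{L^2}$. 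I would extend $L$ to all of $H^{-1}_{scl}(N;E)$ with the same norm by Hahn--Banach, and then apply the Riesz representation for the $L^2$-pairing between $H^{-1}_{scl}$ and $H^1_{scl}$ to produce $u \in H^1_{scl}(N;E)$ with $\lVert u\rVert_{H^1_{scl}(N;E)} \leq Ch\,\lVert f\rVert_{L^2}$ satisfying $(u, P_\varphi^* v)_{L^2} = (f,v)_{L^2}$ for every $v \in C_c^\infty(M^{int};E)$. This is precisely the statement that $P_\varphi u = f$ in $\mathcal{D}'(M^{int};E)$; restricting $u$ to $M$ and using that the restriction $H^1_{scl}(N;E) \to H^1_{scl}(M;E)$ is bounded (with constant independent of $h$) yields the desired $H^1$-solution with the stated bound.

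The only delicate point to verify carefully is that the Riesz identification of $(H^{-1}_{scl}(N;E))^*$ with $H^1_{scl}(N;E)$ is performed with the correct $h$-scaling, so that the factor of $h$ coming from the Carleman estimate survives in the bound on $u$. This is routine once one unwinds the definitions --- the Bessel potentials $J_A^{\pm 1}$ are isometric isomorphisms between the relevant spaces and the dual pairing is the ordinary $L^2$ pairing --- but it must be checked explicitly to ensure no power of $h$ is lost. I expect this bookkeeping to be the only point that requires care; the rest is a clean application of the functional-analytic machinery. Note that interior elliptic regularity for $P_\varphi$ would in fact give $u \in H^2_{loc}(M^{int};E)$, but only the stated $H^1$ bound is needed for the CGO construction of the next section.
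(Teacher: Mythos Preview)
Your proposal is correct and is exactly the standard duality argument the paper has in mind: the paper explicitly leaves this theorem without proof, remarking that it ``formally follows from the scalar case in Theorem 4.4 in \cite{LCW}'' and that the $s=-1$ shifted estimate is the key input. Your write-up---applying the $s=-1$ Carleman estimate to the adjoint $P_\varphi^* = e^{-\varphi/h}\Lapl_{g,A,Q^*}e^{\varphi/h}$ (using that $-\varphi$ is also an LCW), then running the Hahn--Banach/Riesz argument---is precisely that standard proof, and your remark about tracking the $h$-scaling through the $H^{-1}_{scl}/H^1_{scl}$ duality is the right thing to flag.
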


\section{The CGO construction for the case of simple manifolds}\label{CGOsimple}

In this section, we construct the special CGO solutions of the form $u = e^{-\frac{\Psi}{h}}(a + r)$ (for suitable $\Psi$, $a$ and $r$) to the connection Laplacian equation $\Lapl_A(u) = 0$, in the particular case when the transversal manifold is simple. In this case, we have an easy ansatz to the transport and the eikonal equation, so we get away without using the construction of Gaussian Beams in Section \ref{sec5}. The purpose of this is to reduce Conjecture \ref{conjecture1} in this case to a new non-abelian ray transform -- see Question \ref{conjecture2}.

Throughout the section, we will be working in the following setting: $M$ is an $n$-dimensional compact manifold with boundary, $E = M \times \mathbb{C}^m$ is the trivial vector bundle of rank $m$ with the standard fibrewise Hermitian inner product, $A$ a unitary connection on it and $Q$ an $m$ by $m$ matrix potential (section of $\text{End}(E)$). Furthermore, our assumption will be that $M_0$ is simple and that $M$ is isometrically embedded inside the manifold of the same dimension $\mathbb{R} \times M_0$, with the product metric $g = e \oplus g_0$.

Recall that the manifold $M_0$ is \textit{simple} if the exponential map $\text{exp}_p: \text{exp}_p^{-1}(M_0) \to M_0$ is a diffeomorphism for every point $p \in M_0$ and the boundary $\partial M$ is strictly convex. Simplicity of $M_0$ is a natural assumption and many questions about the X-ray transform are posed in this setting. 

We start with stating an identity which will be useful for identifying different parts of the CGO solution. The proof is left as an exercise.



\begin{lemma}\label{conjugate}
The following identity holds, for $s \in \mathbb{C}$, $\rho$ a smooth function on $M$, $u$ a section of $E$, $X$ a smooth $m \times m$ matrix with entries as vector fields and $q$ a smooth $m \times m$ matrix potential:
\begin{align*}
e^{-s\rho}\Big(\Delta + X + q\Big)e^{s\rho}u = (\Delta + X + q)u + s \Big((\Delta \rho)u + X(\rho)u - 2\langle{\nabla \rho, \nabla u}\rangle\Big) - s^2|d\rho|^2u
 \end{align*}
\end{lemma}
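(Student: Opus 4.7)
The identity is a direct conjugation computation in which the three summands $\Delta$, $X$ and $q$ can be handled separately and then added. The plan is to treat them in increasing order of difficulty.

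The zeroth-order piece is immediate: since $q$ acts on $u$ by matrix multiplication and $e^{s\rho}$ is a scalar function, the two commute and $e^{-s\rho} q(e^{s\rho} u) = q u$. For the first-order piece, the Leibniz rule gives
\begin{align*}
X(e^{s\rho} u) = s e^{s\rho} X(\rho)\, u + e^{s\rho} X u,
\end{align*}
so after multiplying by $e^{-s\rho}$ we obtain exactly the terms $X u$ and $s X(\rho) u$ that appear on the right-hand side. Here $X$ is matrix-valued and $u$ is $\mathbb{C}^m$-valued, but the calculation is entirely componentwise, with no interaction between $X$ and the Hermitian structure.

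The main step is the conjugation of $\Delta$. Using the sign convention $\Delta = -\operatorname{div}\nabla$ adopted in the paper and expanding $\nabla(e^{s\rho} u) = e^{s\rho}(s u \nabla\rho + \nabla u)$, one more application of the Leibniz rule (this time to the divergence) yields
\begin{align*}
\Delta(e^{s\rho} u) = e^{s\rho}\bigl(\Delta u - 2 s \langle\nabla\rho, \nabla u\rangle + s (\Delta\rho)\, u - s^{2} |\nabla\rho|^{2}\, u\bigr).
\end{align*}
Multiplying by $e^{-s\rho}$ isolates the bracketed expression, and since $|\nabla\rho|^{2} = |d\rho|^{2}$ this is precisely the Laplacian contribution to the right-hand side of the lemma. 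Summing the three contributions gives the stated identity.

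There is no real obstacle in this argument; it is essentially a bookkeeping exercise. The only two places that require a moment of care are the minus sign in the convention $\Delta = -\operatorname{div}\nabla$, which is what produces the $+s(\Delta\rho) u$ term (rather than a minus), and the factor of $2$ in the cross term, coming from the symmetry $\langle\nabla\rho,\nabla u\rangle + \langle\nabla u,\nabla\rho\rangle = 2\langle\nabla\rho,\nabla u\rangle$. Since $e^{s\rho}$ is scalar, no additional subtlety is introduced by the fact that $u$ takes values in $\mathbb{C}^{m}$, so the vector-bundle case follows from the scalar case applied componentwise.
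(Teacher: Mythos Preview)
Your proof is correct and is exactly the straightforward computation the paper has in mind; indeed, the paper simply states that ``the proof is left as an exercise.'' There is nothing to compare here beyond noting that you have carried out the expected conjugation of each of $q$, $X$, and $\Delta$ separately, with the correct sign conventions.
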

Now plugging in the specific form of the solution as above $u = e^{-\frac{\Psi}{h}}(a + r)$ to the equation $h^2 \Lapl_{A, Q} u = 0$ ($a$ and $r$ are $\mathbb{C}^m$-valued, $\Psi$ a complex function) and using Lemma \ref{conjugate}, we get three equations:
\begin{align}
|d\Psi|^2 &= 0 \label{eikonal}\\
-2 \langle{d\Psi, da}\rangle + (\Delta \Psi)a + X(\Psi)a &= 0\label{transport}\\
e^{\frac{\Psi}{h}} \Lapl_{A, Q} e^{-\frac{\Psi}{h}} r & = - \Lapl_{A, Q} a \label{inhomog}
\end{align}

\noindent where the first two of the them correspond to the dominating factors (the coefficients next to $h^0$ and $h^1$, respectively) when $h \to 0$ and the last one makes sure we get an exact solution and solves for the residue. The notation $\langle{d\Psi, da}\rangle$ means that we consider the vector formed by taking the inner product of each component of $da$ with $d \Psi$. Recall that $X = -2g^{ij}A_i \frac{\partial}{\partial x^j}$ is derived in \eqref{notation} from the pair $(A, Q)$.

\subsection{Eikonal equation}\label{4.1}

This is the equation \eqref{eikonal} above. Recall that in this case the operation $|\cdot|$ is just a complex bilinear form obtained by extending the Riemannian real inner product. Thus, if we write $\Psi = \varphi + i \psi$, the equation can be rewritten as:
\begin{equation}
|\nabla \psi |^2 = |\nabla \varphi |^2, \quad \langle{\nabla \psi, \nabla \varphi}\rangle = 0
\end{equation}
Here we let $\varphi$ to be the LCW given by $\varphi (x) = x_1$.  With this special choice for $\varphi$, our equations become simple: 
\begin{equation}\label{eikonal1}
|\nabla \psi | = 1, \quad \frac{\partial \psi}{\partial x_1} = 0
\end{equation}
because of the splitting of the metric in $\mathbb{R} \times M_0$. Here we will fix a polar coordinate system: we pick a point $\omega \in M_0$ such that $(x_1, \omega)$ is not in $M$ for any $x_1$. We can always do this if we enlarge $M_0$ slightly at the beginning, keeping the metric simple (this is always possible -- see \cite{LCW}), to some manifold $D$ such that:
\begin{align*}
(M, g) \Subset (\mathbb{R} \times M_0, g) \Subset (\mathbb{R} \times D, g)
\end{align*}
We then use the geodesic polar coordinate system to get a coordinate chart $(x_1, r, \theta)$ for $\theta \in S^{n-2}$, to cover $\mathbb{R} \times M_0$, in which the metric has a nice form.

One can then check that $\psi = r$ solves \eqref{eikonal1} and in this case $\Psi = x_1 + ir$ (note that the solution depends on $\omega$). Observe that we could have chosen $\varphi = -x_1$, in which case $\Psi = -x_1 + ir$ works equally well. This will be useful when we plug the solutions into our identity in Theorem \ref{identity}, so that the exponential parts cancel in the product.

\subsection{Transport equation}
This is the equation \eqref{transport}. We now proceed to the calculation of the three terms in this equation, taking $\Psi = x_1 + ir$ for the solution of the eikonal equation. We get the expressions:
\begin{gather*}
\langle{d\Psi, da}\rangle  = \frac{\partial \Psi}{\partial x_1} \frac{\partial a}{\partial x_1} + \sum_{j, k\geq 2}g^{jk}\frac{\partial \Psi}{\partial x^j} \frac{\partial a}{\partial x^k} = \Big(\frac{\partial}{\partial x_1} + i \frac{\partial}{\partial r}\Big) a\\
\Delta \Psi = -|g|^{-1/2}\Big(\sum_{j, k\geq 1} \frac{\partial}{\partial x^j} \big(|g|^{1/2} g^{jk} \frac{\partial \Psi}{\partial x^k}\big)\Big) = - |g|^{-1/2} \Big(\frac{\partial}{\partial x_1} + i \frac{\partial}{\partial r}\Big) (|g|^{1/2})\\
X(\Psi) = -2\Big(\sum_{j, k\geq 1}g^{jk}A_j \frac{\partial}{\partial x^k}(x_1 + ir)\Big) = -2(A_1 + iA_r)
\end{gather*}
Here $A_1$ and $A_r$ are the $dx_1$ and $dr$ components of $A$, respectively and we are taking the $(x^2, \dotso, x^n)$ coordinates on $M_0$, where $x^2 = r$. We set $z = x_1 + ir$ and so we define the complex derivatives as $\frac{\partial}{\partial \bar{z}} = \frac{1}{2}\Big(\frac{\partial}{\partial x_1} + i \frac{\partial}{\partial r}\Big)$ and $\frac{\partial}{\partial z} = \frac{1}{2}\Big(\frac{\partial}{\partial x_1} - i \frac{\partial}{\partial r}\Big)$. Then the equation \eqref{transport} takes the form:
\begin{equation}
4\frac{\partial a}{\partial \bar{z}} + 2|g|^{-1/2} \frac{\partial}{\partial \bar{z}}\big(|g|^{1/2}\big)a + 2(A_1 + iA_r)a = 0
\end{equation}
By introducing an integrating factor and using the substitution $b = a |g|^{1/4}$, we get the following nicer form:
\begin{equation}\label{transport1}
\frac{\partial b}{\partial \bar{z}} = -\frac{1}{2}(A_1 + iA_r)b
\end{equation}
Analogously, using the other solution $\Psi = -x_1 + ir$ of the eikonal equation, we get: 
\begin{equation}\label{transportxxx}
\frac{\partial b}{\partial z} = \frac{1}{2}(-A_1 + iA_r)b
\end{equation}
Since \eqref{transportxxx} can be obtained from \eqref{transport1} by conjugation, we will focus only on the latter. Actually we consider a slightly more general equation:
\begin{equation}\label{transport2}
\frac{\partial C}{\partial \bar{z}}  = BC
\end{equation}
where $C(\theta, x_1, r)$ is a smooth $m$ by $m$ matrix function and we denoted $B = -\frac{1}{2}(A_1 + iA_r)$. We impose one additional condition that $C$ should be invertible. Such a matrix $C$ will play an important role and we will need the solution on an open bounded subset of the plane, depending smoothly on $\theta$.


If one is interested in solving this equation on the whole domain of $\mathbb{C}$, a natural boundary condition would be to have $C$ approaching the identity at $\infty$; however this might be impossible -- see \cite{Eskin} for the proof of existence of a $C$ which has polynomial growth.

For $m = 1$, we may solve \eqref{transport2} by substituting the exponential function $C = e^\Phi$ and then using the Cauchy operator to solve $\bar{\partial} \Phi = B$.\footnote{Another way to solve $\bar{\partial} \Phi = B$ is to recall the fundamental solution $\frac{1}{\pi z}$ of the Cauchy-Riemann operator $\frac{\partial}{\partial \bar{z}}$ that satisfies $\frac{\partial}{\partial \bar{z}} \frac{1}{\pi z} = \delta$, where $\delta$ is the Dirac delta; then the convolution $\Phi = \frac{1}{\pi z} * B$ is a solution of $\frac{\partial}{\partial \bar{z}} \Phi = B$ (here $B$ has compact support). This is just a restatement of the generalised Cauchy integral formula that is being referred to in the text, which gives: $\Phi(\omega) = \frac{1}{2\pi i} \int_\mathbb{C} \frac{B(z)}{z - \omega} dz d\bar{z}$.} However, for $m > 1$ the situation complicates, so we give one proof of existence in the next subsection and a brief overview of other approaches. Given a matrix $C$ solution of \eqref{transport2}, one solution of the transport equation \eqref{transport1} is given by $a = C h$, where $h$ is holomorphic in each coordinate.


%

\subsection{Complex geometric approach to the construction of the solution to transport equation}\label{paramsoln}
Using some standard theory of holomorphic vector bundles one can describe a solution to the transport equation \eqref{transport2} in a geometric way. References are books by Kobayashi \cite{kobayashi_cvb} (Propositon 3.7) and Foster \cite{forster} (Theorem 30.1).

\begin{theorem}
Let $E$ be a $C^{\infty}$ complex vector bundle over a complex manifold $M$. Then if $D$ is a connection on $E$ such that $D'' \circ D'' = 0$, then there exists a unique holomorphic vector bundle structure on $E$ such that $D'' = d''$.
\end{theorem}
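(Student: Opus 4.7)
The plan is to reduce to a local problem and then exhibit local holomorphic frames. First I would decompose $D = D' + D''$ according to type, so that in any local smooth trivialization over a chart $U$ with holomorphic coordinates, $D''$ takes the form $\bar\partial + \omega$ for some matrix of $(0,1)$-forms $\omega$ on $U$. The hypothesis $D''\circ D'' = 0$ then translates into the integrability condition $\bar\partial \omega + \omega \wedge \omega = 0$, which will play the role of an obstruction-vanishing statement.

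Next, the heart of the argument is to construct, in a neighborhood of each point, an invertible matrix-valued function $f$ satisfying $\bar\partial f + \omega f = 0$, equivalently $D''$ applied to each column of $f$ vanishes; these columns then form a local frame whose members lie in $\ker D''$. Given such local frames $\{f_\alpha\}$ on a sufficiently fine cover $\{U_\alpha\}$, write $f_\beta = f_\alpha\, g_{\alpha\beta}$ on overlaps; applying $D''$ and using $D''f_\alpha = 0 = D''f_\beta$ gives $f_\alpha\, \bar\partial g_{\alpha\beta} = 0$, so the transition matrices $g_{\alpha\beta}$ are holomorphic. Declaring $\{(U_\alpha, g_{\alpha\beta})\}$ to be the transition data of a new holomorphic structure endows $E$ with a holomorphic vector bundle structure whose associated Cauchy--Riemann operator $d''$ coincides with $D''$ by construction.

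The hard part will be the local solvability of $\bar\partial f + \omega f = 0$ with $f$ invertible. When $\dim_\mathbb{C} M = 1$ this is already essentially the content of equation \eqref{transport2} in the paper: for $m = 1$ one writes $f = e^{\Phi}$ and inverts $\bar\partial$ via the Cauchy transform, while for general $m$ one can solve on a small disc by Picard iteration, using that the Cauchy operator becomes a contraction after suitable shrinking. In higher complex dimensions this is the Koszul--Malgrange theorem: one proceeds by induction on the number of complex variables, solving a $\bar\partial$-equation in one coordinate at a time, and uses the integrability identity $\bar\partial \omega + \omega \wedge \omega = 0$ precisely to ensure that the partial solutions can be chosen compatibly. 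I expect the multivariable step to be the main technical obstacle, as it needs genuine Newlander--Nirenberg-type analytic estimates on polydiscs rather than just the ODE-style arguments that suffice in complex dimension one.

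For uniqueness, I would observe that in any holomorphic bundle structure compatible with $D''$, the sheaf of local holomorphic sections is exactly $\ker D''$. Since a holomorphic vector bundle is determined up to canonical isomorphism by its sheaf of holomorphic sections (with the underlying smooth bundle fixed), any two such structures must coincide.
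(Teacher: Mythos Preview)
The paper does not actually prove this theorem: it is quoted as Proposition 3.7 from Kobayashi's book \cite{kobayashi_cvb} and used as a black box in the proof of Theorem \ref{geointerpret}. So there is no ``paper's own proof'' to compare against.

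That said, your outline is the correct and standard one (the Koszul--Malgrange theorem): reduce to the local existence of a $D''$-flat frame, observe that the resulting transition matrices are annihilated by $\bar\partial$ and hence holomorphic, and deduce uniqueness from the fact that the sheaf of holomorphic sections is forced to be $\ker D''$. Your identification of the hard step is also accurate. In complex dimension one the integrability condition $D''\circ D'' = 0$ is automatic, and the local frame problem is exactly the equation $\bar\partial C = BC$ treated in Section~\ref{paramsoln}; indeed the paper only ever applies the theorem over open subsets of $\mathbb{C}$, where this elementary argument suffices. In higher complex dimension one genuinely needs the integrability hypothesis and an inductive Newlander--Nirenberg-type argument, as you say, but this lies outside what the paper requires or develops.
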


\begin{theorem}\label{Forster}
Let $X$ be an open Riemann surface and $E$ a holomorphic vector bundle over $X$, of rank $m$. Then $E$ is trivial, i.e. there exists a set of holomorphic sections $s_i$, $i = 1, \dots, m$ such that they span $E_p$ for each point $p$ in $X$.
\end{theorem}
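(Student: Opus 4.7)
The plan is to argue by induction on the rank $m$, using two vanishing theorems for open Riemann surfaces as the main analytic input. First I would establish the line bundle case ($m=1$), then for $m \geq 2$ exhibit a nowhere-vanishing global holomorphic section, use it to split off a trivial line sub-bundle, and reduce to rank $m-1$.

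For $m=1$, isomorphism classes of holomorphic line bundles on $X$ are classified by $H^1(X, \mathcal{O}^*)$, and I would read this group off from the long exact cohomology sequence associated with the exponential sheaf sequence
\begin{equation*}
0 \to \mathbb{Z} \to \mathcal{O} \xrightarrow{\exp(2\pi i \cdot)} \mathcal{O}^* \to 0.
\end{equation*}
The flanking terms $H^1(X, \mathcal{O})$ and $H^2(X, \mathbb{Z})$ both vanish: the former because $X$ is a Stein manifold (Behnke--Stein) and $\mathcal{O}$ is a coherent analytic sheaf, so Cartan's Theorem B applies; the latter because any open Riemann surface deformation retracts onto a $1$-dimensional CW-complex. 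Thus $H^1(X, \mathcal{O}^*) = 0$ and every holomorphic line bundle on $X$ is trivial, giving the base case of the induction.

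For the inductive step with $m \geq 2$, I would first produce a holomorphic section of $E$ that does not vanish anywhere. Cartan's Theorem A on the coherent sheaf of sections of $E$ gives, at every point, sections whose values generate the fibre, in particular non-zero sections there. A dimension count (a generic holomorphic section of a rank-$m$ bundle over a complex $1$-dimensional manifold should vanish in codimension $m$, hence not at all for $m \geq 2$) combined with a Runge-type approximation along a compact exhaustion $K_1 \subset K_2 \subset \cdots$ of $X$ lets me assemble a globally nowhere-vanishing section $s$. This section spans a trivial line sub-bundle $L \cong \mathcal{O}$ of $E$, yielding a short exact sequence of coherent sheaves $0 \to L \to E \to E/L \to 0$, where $E/L$ is a holomorphic bundle of rank $m-1$. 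The class of this extension lives in $H^1(X, \mathrm{Hom}(E/L, L))$, which again vanishes by Theorem B applied to the coherent sheaf $\mathrm{Hom}(E/L, L)$. Hence the sequence splits as $E \cong L \oplus E/L$, and the inductive hypothesis on $E/L$ finishes the argument.

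The main obstacle is purely analytic: the Behnke--Stein theorem that every open Riemann surface is Stein, together with Cartan's Theorems A and B on coherent analytic sheaves over Stein manifolds. These are classical but non-trivial, and indeed they are the content of most of the preceding chapters of Forster's book. Once they are accepted as black boxes, the proof reduces to the homological bookkeeping with the exponential sequence and the rank reduction above. Since the theorem is quoted from Forster (Theorem 30.1), in practice I would rely on the standard reference rather than reprove these underlying results.
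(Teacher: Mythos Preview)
The paper does not prove this theorem at all: it is stated as a known result and attributed to Forster \cite{forster}, Theorem 30.1, to be used as a black box in the proof of Theorem \ref{geointerpret}. So there is no ``paper's own proof'' to compare against; the author simply cites the reference, exactly as you yourself suggest doing in your last paragraph.

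Your sketch is nonetheless a faithful outline of the standard argument (essentially the one Forster gives): the exponential sequence plus Behnke--Stein/Cartan B for the rank-one case, and for higher rank a nowhere-vanishing section obtained by Runge approximation along an exhaustion, followed by $H^1$-vanishing to split off the resulting trivial sub-line-bundle. The one place where your write-up is a bit loose is the ``dimension count'' for the nowhere-vanishing section: over a non-compact $X$ a naive genericity statement is not immediately available, and the actual work (as you correctly note afterwards) lies in the inductive Runge-type construction along a compact exhaustion, perturbing at each stage to avoid zeros while keeping the section close enough on previous compacta to converge. If you were to write this out in full, that is the step requiring care; the rest is cohomological bookkeeping exactly as you describe.
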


In the former theorem, by $D''$ we mean the $(0, 1)$ component of the connection derivative and by $d'' = \bar{\partial}$ the $(0, 1)$ component of the exterior derivative. 

\begin{theorem}\label{geointerpret}
Let $\Omega \subset \mathbb{C}$ be an open subset of the complex plane and let $E = \Omega \times \mathbb{C}^m$, equipped with a connection $D$. Then there exists a smoothly varying invertible matrix $F$ such that $\bar{\partial} F = -F A_{0, 1} $, where $A_{0, 1}$ is the $(0, 1)$ part of the connection matrix of $D$. In particular, for any matrix $B$, there exists an invertible, smoothly varying matrix $C$ such that $\frac{\partial C}{\partial \bar{z}} = BC$.
\end{theorem}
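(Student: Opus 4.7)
The plan is to invoke, in sequence, the Kobayashi-type existence theorem and Forster's triviality theorem stated just above, reducing the problem to a short matrix computation. First, since $\Omega\subset\mathbb{C}$ has complex dimension one, every $(0,2)$-form on $\Omega$ vanishes, and hence the integrability hypothesis $D''\circ D''=0$ in the first cited theorem is automatic for \emph{any} connection $D$ on $E$. Applying that theorem yields a holomorphic vector bundle structure on $E$ (compatible with its fixed smooth structure) whose Dolbeault operator $\bar\partial$ coincides with $D''$.

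Next, since $\Omega$ is a non-compact Riemann surface, Theorem \ref{Forster} produces a global holomorphic frame $s_1,\dots,s_m$ of $E$ with respect to this new holomorphic structure. Expressing each $s_j$ in the fixed smooth trivialisation $E=\Omega\times\mathbb{C}^m$ as a column vector and assembling them into an $m\times m$ matrix $F$ (with $j$-th column $s_j$), we obtain a smooth map $F\colon\Omega\to\mathrm{GL}(m,\mathbb{C})$: smoothness follows because holomorphic sections are in particular smooth, and pointwise invertibility follows from the fact that the $s_j$ form a frame at every point. The holomorphy condition $D''s_j=\bar\partial s_j+A_{0,1}s_j=0$ for each $j$ reads in matrix form as $\bar\partial F+A_{0,1}F=0$. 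The form $\bar\partial F=-F\,A_{0,1}$ stated in the theorem is then obtained either by adopting the right-action convention for the connection matrix on a moving frame, or equivalently by replacing $F$ with $F^{-1}$ and using $\bar\partial(F^{-1})=F^{-1}A_{0,1}$; both manipulations produce another smooth invertible matrix with the required property.

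For the second assertion, given a smooth matrix $B$ simply take on $E=\Omega\times\mathbb{C}^m$ the connection whose matrix is $A=-B\,d\bar z$, so that $A_{0,1}=-B\,d\bar z$. Applying the first part produces an invertible smooth $F$ satisfying $\partial F/\partial\bar z = BF$, and we set $C:=F$ to solve \eqref{transport2}. The entire argument is structural, a dictionary between holomorphic frames and solutions of $\bar\partial$-type matrix equations; there is no genuine analytic obstacle beyond quoting the two theorems, and the only point requiring care is the bookkeeping of left/right conventions for the action of the connection matrix on the frame.
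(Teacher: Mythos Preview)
Your proof is correct and follows essentially the same route as the paper: automatic integrability from $\dim_{\mathbb{C}}\Omega=1$, then Kobayashi's theorem to produce a holomorphic structure with $D''=\bar\partial$, then Forster's triviality to obtain a global holomorphic frame, and finally the matrix equation read off from $D''s_j=0$. The only difference is bookkeeping: the paper writes $s_i=Fe_i$ and derives $\bar\partial F=-FA_{0,1}$ directly (then takes $A_{0,1}=B\,d\bar z$ and $C=F^{-1}$), whereas you obtain $\bar\partial F=-A_{0,1}F$ from the column-vector convention and compensate with $A_{0,1}=-B\,d\bar z$ and $C=F$; you correctly flag this as a left/right convention issue, and both versions yield an invertible solution of $\partial_{\bar z}C=BC$.
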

\begin{proof}
The proof relies on the previous two theorems; namely, we automatically have $D'' \circ D'' = 0$ by dimension. Thus, there exists a holomorphic structure on $E$ such that $D'' = d''$. Although our vector bundle is smoothly trivial, we do not know if it is holomorphically trivial -- this is given by Theorem \ref{Forster}. Thus, there exists a set of holomorphic trivialisations $s_i$, $i = 1, \dots, m$ such that they are linearly independent at each point of $\Omega$; in these new coordinates, we also have $D'' = d''$. In other words, there exists a smoothly (not necessarily holomorphically) varying matrix $F: \Omega \to GL(m, \mathbb{C})$ such that, $s_i = F e_i$, where $e_i$ is our standard global frame of $E$. Then we have the change of basis law for connections: 
\begin{equation}
0 = \bar{\partial} s_i = D'' s_i = D''(F e_i) = \bar{\partial} F e_i + F D'' e_i = \bar{\partial} F e_i + F A_{0, 1} e_i
\end{equation}
for all $i = 1, \dots, m$. Thus we get, in matrix form:
\begin{equation}
\bar{\partial} F = - F A_{0, 1}
\end{equation}
By picking the $(0, 1)$ part of the connection matrix to be $Bd\bar{z}$, and letting $C = F^{-1}$, we get $\frac{\partial C}{\partial \bar{z}} = BC$.
\end{proof}

\begin{rem}\rm
We digress slightly to note that there are examples of smoothly trivial holomorphic line bundles, but \emph{not holomorphically} trivial. The long exact sequence associated to the short exact sequence $0 \to 2\pi i \mathbb{Z} \to \mathcal{O} \to \mathcal{O^*} \to 0$ (here $\mathcal{O}$ and $\mathcal{O^*}$ are the sheaves of holomorphic and nowhere vanishing holomorphic functions, respectively) that the map $c_1: \text{Pic}(M) \to \mathbb{Z}$ given by the first Chern class has a non-trivial kernel over a surface of positive genus $M$ (Pic($M$) is the holomorphic Picard group).
\end{rem}

Theorem \ref{geointerpret} provides us with a geometric interpretation of \eqref{transport2} for a fixed $\theta$. In order to solve this equation smoothly in $\theta$, we need to go through the proof of trivialising a family of holomorphic vector bundles parametrically. We will not do this here, since there are already a few proofs of existence of such parametric solutions present in other sources.

Let us give a brief overview of proofs of existence of (invertible) solutions to the above equation we found in literature. As mentioned, Eskin \cite{Eskin} gives us $C$ depending smoothly on a parameter, with polynomial growth as $|z| \to \infty$. A more concise proof is given by the same author and Ralston in Theorem 4, \cite{CRsystems} ($Y = S^{n-2}$ in our case) -- it relies on solving the equation locally in $z$ using the Cauchy operator to transform it to an integral equation and then gluing these local solutions together using the Cartan's lemma. Finally, Nakamura and Uhlmann \cite{nakamura_uhlmann} also provide us with another method.


\subsection{The inhomogeneous part}

Here we deal with the third equation set out above, the equation \eqref{inhomog}. With the Carleman estimates established so far, we can easily construct the residue with the wanted estimates -- we just use Theorem \ref{carleman} to solve for the $h$-dependent residue $r_h$ (note the distinction between the radial variable $r$ and the function $r_h$), such that $\lVert{r_h}\rVert_{L^2(M; E)} = O(h)$ and $\lVert{r_h}\rVert_{H^1(M; E)} = O(1)$; equivalently $\lVert{r_h}\rVert_{H^1_{scl}(M; E)} = O(h)$.


\subsection{Consequences of the CGO construction and recovering the connection}\label{sec4.5}

In this section, we use the previously obtained CGO solutions to deduce some new information from the equality of the DN maps. Reducing to an X-ray transform or asking for injectivity of some other transform is often the way to make the final step in solving inverse problems: see \cite{Pat, LCW, CTA, chung} for examples of such results for the X-ray transform or \cite{MagU} for an example of the Radon transform on planes; this is the viewpoint we will take. 

We equip $E = M \times \mathbb{C}^m$ with two potentials $Q_{1, 2}$ and unitary connections $A_{1, 2}$; we assume that $\Lambda_{A_1, Q_1} = \Lambda_{A_2, Q_2}$. It is technically easier to consider the endomorphism bundle $E' = M \times \mathbb{C}^{m \times m}$ and extend the action of $\Lapl_{A_1, Q_1}$ and $\Lapl_{A_2, Q_2}$ in the trivial way to sections of $E'$ (by matrix multiplication). So we consider matrix solutions $U_1$ and $U_2$ to $\Lapl_{A_1, Q_1} U_1 = 0$ and $\Lapl_{A_2, Q_2^*} U_2 = 0$, constructed by our work in previous subsections, which are of the form:
\begin{align*}
U_1 & = e^{-\frac{x_1 + ir}{h}} \big(|g|^{-1/4} C_1 H(x_1, r) b(\theta) + R_1\big)\\
U_2 &= e^{-\frac{-x_1 + ir}{h}} \big(|g|^{-1/4} C_2 + R_2\big)
\end{align*}
where $H$ a holomorphic matrix, $b$ is a smooth function and we have the estimates $\lVert{R_1}\rVert_{H^1_{scl}(M; E')} = O(h)$ and $\lVert{R_2}\rVert_{H^1_{scl}(M; E')} = O(h)$. The invertible matrices $C_i$ are given by solving the transport equations \eqref{transport1} and \eqref{transportxxx} in the matrix form:
\begin{equation}\label{trans}
\frac{\partial C_1}{\partial \bar{z}} = -\frac{1}{2}\big((A_1)_1 + i(A_1)_r\big)C_1 \quad \text{and} \quad \frac{\partial C_2}{\partial z} = \frac{1}{2}\big(-(A_2)_1 + i(A_2)_r\big)C_2
\end{equation}
We wish to plug these in the identity obtained in Theorem \ref{matidentity}. Note that we have:
\begin{align*}
dU_1 &= e^{-\frac{x_1 + ir}{h}} \Big(-\frac{dx_1 + i dr}{h}\big(|g|^{-\frac{1}{4}} C_1 H b + R_1\big) + d(|g|^{-\frac{1}{4}}C_1 H b) + d(R_1)\Big)\\
dU_2^* &= e^{\frac{x_1 + ir}{h}} \Big(\frac{dx_1 + i dr}{h}\big(|g|^{-\frac{1}{4}} C_2^* + R_2^*\big) + d(|g|^{-\frac{1}{4}}C_2^*) + d(R_2^*)\Big)
\end{align*} 
Therefore, in the limit $h \to 0$, for $\tilde{A} = A_2 - A_1$:
\begin{equation*}
\lim_{h \to 0} h\big(U_1 (dU_2^*) - (dU_1) U_2^*,  \tilde{A}\big)_M = -2 \int_M{\Tr\big(|g|^{-\frac{1}{2}} b(\theta) C_1 H C_2^* (\tilde{A}_1 + i \tilde{A}_r)\big)} dV_g
\end{equation*}
by using Cauchy-Schwartz and the bounds we have on the $\lVert{R_1}\rVert_{H^1_{scl}}$ and  $\lVert{R_2}\rVert_{H^1_{scl}}$, along with the fact that everything else is uniformly bounded. Moreover, since the $A_i$ and $Q_i$ are bounded for $i = 1, 2$ and the exponential parts of $U_1$ and $U_2^*$ cancel, the first integral in the identity is equal to $O(1)$. Thus we get, by taking the limit $h \to 0$:
\begin{equation}
\int_M{|g|^{-\frac{1}{2}} b(\theta) \Tr\big(C_1 H C_2^* (\tilde{A}_1 + i \tilde{A}_r)\big)} dV_g = 0
\end{equation}
where $dV_g$ is the volume form. Since $dV_g = |g|^{1/2} dx_1 dr d\theta$ and since we can vary $b$ so that it approximates the delta function $\delta_\eta$ for some fixed angle $\eta$, by rearranging the terms in the trace bracket we obtain:
\begin{equation}\label{gen1}
\int_{M_0^{\eta}} \Tr \Big( H C_2^* (\tilde{A}_1+ i\tilde{A}_r) C_1\Big) dz \wedge d\bar{z} = 0
\end{equation}
where $z = x_1 + ir$ and $M_0^{\eta} := [-N, N] \times M_0 \cap \{\theta = \eta \}$, for some large $N$ (we also have $M^{\eta} = M \cap \{\theta = \eta\}$ is a 2-dimensional smooth manifold for almost all $\eta$ by Sard's theorem; the previous integral can be made over such $M^\eta$, too), such that $[-N, N] \times D$ contains a neighbourhood of $M$.

Here, we extended the connections $A_1$ and $A_2$ to the outside of $M$ (whole of $\mathbb{R} \times M_0$), such that they are unitary, compactly supported and such that $A_1 = A_2$ outside $M$. This is allowed by boundary determination, which gives us that the full jets of $A_1$ and $A_2$ are the same in suitable gauges. 




Now, by using the equations \eqref{trans}, we get:
\begin{equation}
\frac{\partial}{\partial \bar{z}} \big(C_2^* C_1\big) = \frac{1}{2} C_2^* (\tilde{A}_1 + i \tilde{A}_r)C_1
\end{equation}
where we also used that $A_i$s are skew-Hermitian. By substituting $C_0H$ in place of $H$ in the identity \eqref{gen1}, where $C_0$ is a constant matrix and $H$ holomorphic, and by varying the entries of $C_0$ we obtain:
\begin{equation}
\int_{M_0^{\eta}} H \frac{\partial}{\partial \bar{z}}\big(C_2^* C_1\big) dz \wedge d\bar{z} = 0
\end{equation}
and therefore by Stokes' theorem, we get:
\begin{equation}\label{integraleq}
\int_{\partial M_0^{\eta}} H C_2^* C_1dz = 0
\end{equation}
Note that $H$ is an arbitrary holomorphic matrix, i.e. $\frac{\partial H}{\partial \bar{z}} = 0$ and that the order in which we take matrix multiplication inside the integral is important.
%

We would now like to deduce a suitable transport equation on $\mathbb{R} \times SM_0$ and try to solve the problem from there.

\begin{figure}[h]
   \centering
    \includegraphics[width=1\textwidth]{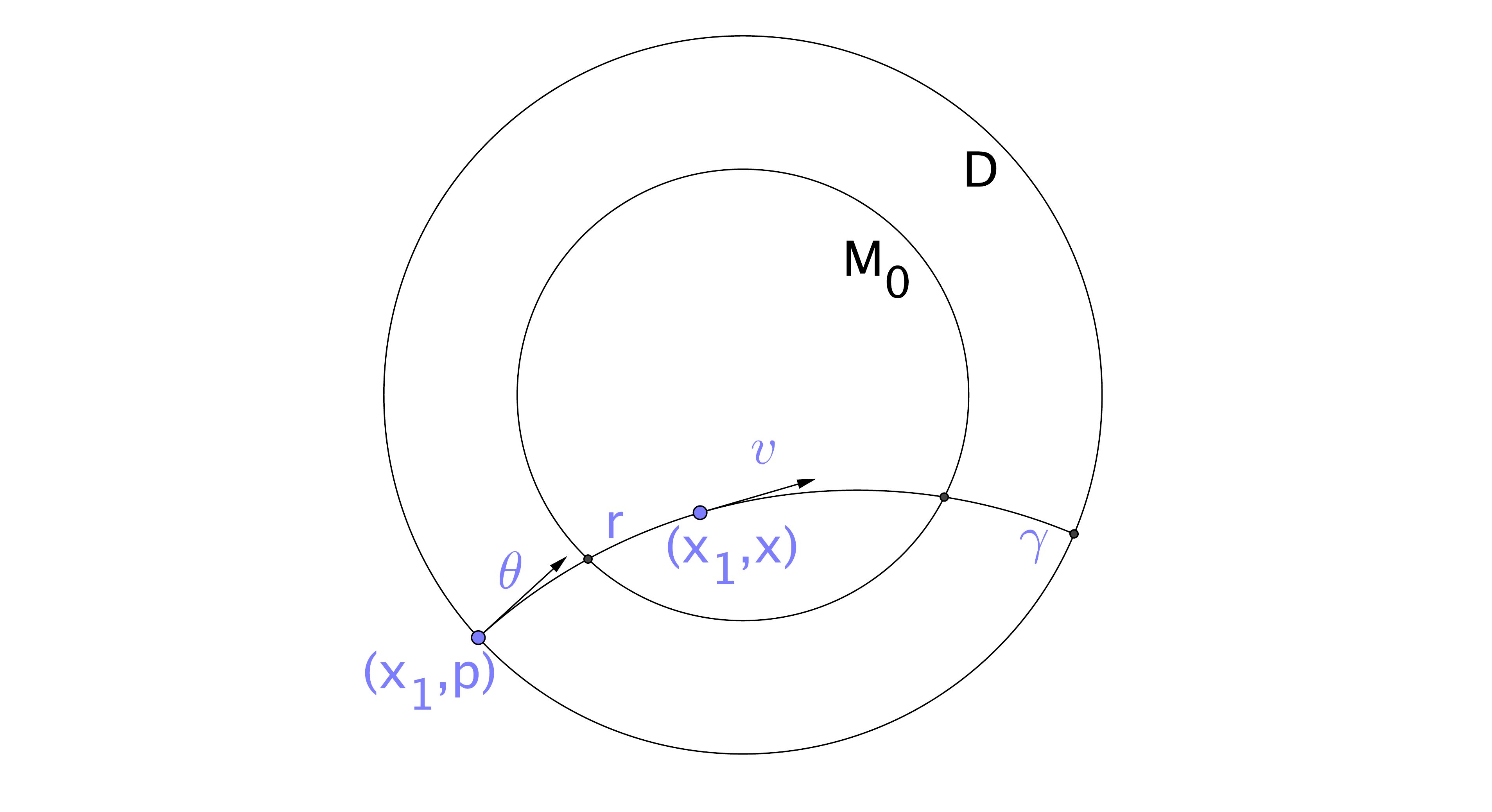}
            \captionsetup{format=hang, labelfont=sc}
    \caption{The construction of the matrix function $G(x_1, x, v)$: we are following the geodesic $\gamma$ at $(x_1, x)$ in the direction $-v$ up to the point $(x_1, p)$ on $\partial D$. The angle $\theta$ denotes the polar coordinate of the point $x$ with centre at $p$; distance between $x$ and $p$ is $r$.}
    \label{simplepic}
\end{figure}

Recall from Section \ref{4.1} the enlarged simple manifold $D$, which contains $M_0$. As we go along $\partial D$ and follow the tangent vectors, we obtain families of geodesics on $M_0$. Let us denote by $C_1(p, \theta, x_1, r)$ and $C_2(p, \theta, x_1, r)$ the solutions to equations \eqref{trans}, where $p$ denotes the point of the origin of the polar coordinate system. As explained previously in Subsection \ref{paramsoln}, we may construct solutions to \eqref{trans} depending smoothly on a parameter, giving $C_1$ and $C_2$ smooth as we vary $(p, \theta, x_1, r)$. 

Now given any $(x_1, x) \in \mathbb{R} \times M_0$ and $v \in S_x M_0$ a unit tangent vector, we may trace backwards the geodesic $\gamma$ starting at $(x_1, x)$ with speed $v$ (or go forwards in time with the geodesic with speed $-v$), until we hit $\partial D$; call this point $(x_1, p)$ -- see Figure \ref{simplepic}. Since $D$ is simple, we have the smooth dependence $p = p(x, v)$. Define 
\[G (x_1, x, v) = C_1 (p, \theta, x_1, r) C_2^*(p, \theta, x_1, r)\] 
where $r$ is the length along $\gamma$ from $(x_1, p)$ to $(x_1, x)$, $\theta$ is the coordinate of $(x_1, x)$ in the polar coordinate system (i.e. $\dot{\gamma}$ at the point $(x_1, p)$). Again since $D$ is simple we have the smooth dependence $\theta = \theta(x, v)$, which implies that $G$ is smooth. Therefore, we obtain a smooth matrix function $G$ (section of $E'$) on $\mathbb{R} \times SM_0$, where $SM_0$ denotes the unit sphere bundle. By the previous analysis, we have an equation for $G$: 
\begin{equation*}
2\frac{\partial G}{\partial \bar{z}} = -\big((A_1)_1 + i(A_1)_r\big)G + G\big((A_2)_1 + i(A_2)_r\big)
\end{equation*}
on the planes which are generated by the $x_1$ direction and a geodesic, i.e. by setting $\theta$ to be constant for a given $p \in \partial D$. From the previous equation we easily deduce that we have globally:
\begin{equation}\label{ray}
\big(\frac{\partial}{\partial x_1} + i X(x, v)\big)G = -A_1\big(\frac{\partial}{\partial x_1} + iv\big)G + GA_2\big(\frac{\partial}{\partial x_1} + iv\big)
\end{equation}
for all $x \in M_0$, $v$ unit tangent vectors in $S_xM_0$ and $x_1 \in \mathbb{R}$; $X(x, v)$ is the geodesic vector field on $SM_0$. Let us make a shorthand notation for the complex vector field $\mathbb{X}(x_1, x, v) = \frac{\partial}{\partial x_1} + iX(x, v)$.

First of all, let us see what information our integral equation \eqref{integraleq} gives us. We will need the following standard result:

\begin{lemma}\label{restrictionlemma}
Let $\Omega \subset \mathbb{C}$ be a domain with smooth boundary and let $f$ be a smooth function on $\partial \Omega$. Then $f$ is a restriction of a holomorphic function $h$ on $\Omega$, i.e. $f = h|_{\Omega}$ if and only if
\begin{align*}
\int_{\partial \Omega} g(z) f(z) dz = 0
\end{align*}
for all holomorphic functions $g$ on $\Omega$, which have a continuous extension to $\bar{\Omega}$.
\end{lemma}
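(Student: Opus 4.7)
The plan is to prove the two implications separately. The forward direction follows immediately from Cauchy's theorem, while the reverse direction is the content and will be handled by constructing a holomorphic extension via a Cauchy-type integral and then identifying its boundary values using the Plemelj--Sokhotski jump formula.

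For the forward direction, assume $f = h|_{\partial\Omega}$ with $h$ holomorphic on $\Omega$ and continuous on $\bar{\Omega}$. Given any admissible $g$ from the statement, the product $gh$ is holomorphic on $\Omega$ and continuous on $\bar{\Omega}$. Applying Cauchy's theorem on a slight inward smooth retraction of $\Omega$ and passing to the limit using the uniform continuity of $gh$ on $\bar\Omega$ yields
\[
\int_{\partial\Omega} g(z) f(z)\, dz \;=\; \int_{\partial\Omega} g(z) h(z)\, dz \;=\; 0.
\]

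For the reverse direction, I would introduce the Cauchy-type integral
\[
h(w) \;=\; \frac{1}{2\pi i} \int_{\partial\Omega} \frac{f(z)}{z-w}\, dz, \qquad w \in \mathbb{C} \setminus \partial\Omega,
\]
which is automatically holomorphic on its domain of definition. For every $w \notin \bar\Omega$, the function $z \mapsto (z-w)^{-1}$ is holomorphic on $\Omega$ and continuous on $\bar\Omega$, so the hypothesis forces $h(w) = 0$. Hence $h$ vanishes identically on $\mathbb{C} \setminus \bar\Omega$, and in particular the exterior non-tangential boundary value $h_-$ is zero on $\partial\Omega$.

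The main step is then the Plemelj--Sokhotski jump formula: because $\partial\Omega$ is smooth and $f$ is smooth on $\partial\Omega$, the Cauchy integral admits non-tangential boundary values $h_+$ from inside $\Omega$ and $h_-$ from outside, related by $h_+(z_0) - h_-(z_0) = f(z_0)$ for every $z_0 \in \partial\Omega$ (with $\partial\Omega$ positively oriented relative to $\Omega$). Combined with $h_- \equiv 0$, this gives $h_+ = f$, so $h$ is the desired holomorphic extension whose boundary restriction equals $f$. The only genuinely delicate point is verifying the attainment of boundary values in a sufficiently strong sense, but the smoothness of $f$ and $\partial\Omega$ reduces this to the classical singular-integral theory (e.g.\ the treatment in Muskhelishvili), so no new difficulty arises.
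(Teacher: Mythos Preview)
Your proposal is correct and follows essentially the same approach as the paper, which does not give a detailed proof but simply states that the result uses the Plemelj--Sokhotski--Privalov formula and refers to Lemma~5.1 in \cite{MagU}. Your argument --- Cauchy's theorem for the forward direction and the Cauchy integral combined with the Plemelj--Sokhotski jump relation for the converse --- is precisely the standard proof underlying that reference.
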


The proof of this Lemma uses the  Plemelj-Sokhotski-Privalov formula and it follows from the proof of Lemma 5.1 in \cite{MagU}. As an application of this result, we have:
\begin{lemma}\label{holo_restrict}
There exists a holomorphic, invertible matrix function $F$ on $M_0^{\eta}$, such that $F^{-1}|_{\partial M_0^{\eta}} = C_2^* C_1|_{\partial M_0^{\eta}}$.
\end{lemma}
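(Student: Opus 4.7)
The plan is to apply Lemma~\ref{restrictionlemma} entry-by-entry to the matrix identity \eqref{integraleq} and then to verify invertibility of the resulting holomorphic extension via a winding-number argument.

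For the existence of a holomorphic extension, I would specialise the matrix-valued holomorphic test $H$ in \eqref{integraleq} so that its only nonzero entry is $H_{pq} = h\,\delta_{pi}\delta_{qk}$, where $h$ is an arbitrary scalar holomorphic function on $M_0^\eta$ admitting a continuous extension to $\overline{M_0^\eta}$ and $i,k$ are fixed indices. The $(p,j)$-entry of the product $HC_2^*C_1$ then equals $\delta_{pi}\,h\,(C_2^*C_1)_{kj}$, so \eqref{integraleq} reduces to the family of scalar identities
\[
\int_{\partial M_0^\eta} h(z)\,(C_2^*C_1)_{kj}\,dz = 0
\]
valid for every scalar holomorphic $h$ and every pair $(k,j)$. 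By Lemma~\ref{restrictionlemma}, each boundary entry $(C_2^*C_1)_{kj}|_{\partial M_0^\eta}$ is the restriction of a function $G_{kj}$ holomorphic on $M_0^\eta$; assembling these entries produces a holomorphic matrix-valued function $G$ on $M_0^\eta$ with $G|_{\partial M_0^\eta}=C_2^*C_1|_{\partial M_0^\eta}$.

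To show $G$ is invertible, so that $F := G^{-1}$ is the required holomorphic matrix, I would apply the argument principle to the holomorphic scalar function $\det G$. Since $C_1,C_2$ are globally invertible, the boundary values $\det G|_{\partial M_0^\eta}=\det(C_2^*C_1)|_{\partial M_0^\eta}$ are nowhere zero, so the number of zeros of $\det G$ in $M_0^\eta$ equals the winding number of this boundary map about the origin. To show this winding number vanishes, I would consider the continuous deformation $A_i^{(t)}=tA_i$, $t\in[0,1]$, which yields a smooth family of transport solutions $C_i^{(t)}$ via the parametric construction of Section~\ref{paramsoln}; since the transport equations are linear and first order, each $C_i^{(t)}$ stays invertible throughout. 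The boundary maps $\det(C_2^{(t)*}C_1^{(t)})|_{\partial M_0^\eta}$ form a continuous family of nowhere-vanishing functions, so their winding number about $0$ is independent of $t$ and equals $0$ at $t=0$ (with the normalisation $C_i^{(0)}=\mathrm{Id}$). Hence $\det G$ has no zeros in $M_0^\eta$, and $G$ is invertible on $M_0^\eta$.

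The main subtlety is the normalisation in the deformation argument: the transport solutions $C_i^{(t)}$ are only determined up to right multiplication by a holomorphic (respectively antiholomorphic) matrix, so I would need to verify that the parametric existence result of Section~\ref{paramsoln} can be arranged to produce a continuous family satisfying $C_i^{(0)}=\mathrm{Id}$. Once this normalisation is in place, the winding-number computation is a routine homotopy argument and concludes the proof.
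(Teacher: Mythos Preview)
Your proof is correct, and the first half (entry-by-entry application of Lemma~\ref{restrictionlemma}) matches the paper exactly. For invertibility, however, the paper takes a shorter route than your homotopy argument. Since $M_0^\eta$ is simply-connected (it is essentially a rectangle $[-N,N]\times[0,L]$), the nowhere-vanishing smooth functions $\det C_1$ and $\det C_2$ admit global logarithms $\Phi_1$, $\Phi_2$ on $M_0^\eta$; hence $\det G|_{\partial M_0^\eta} = e^{\Phi_1 + \overline{\Phi}_2}|_{\partial M_0^\eta}$ has zero winding number immediately, and the argument principle finishes the proof.

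Your deformation $A_i^{(t)} = tA_i$ works too, and the normalisation worry you flag is easily resolved: at $t=0$ the transport equations become $\bar\partial C_1 = 0$ and $\partial C_2 = 0$, so $C_1^{(0)}$ is holomorphic and $C_2^{(0)}$ antiholomorphic; right-multiplying the families by $(C_1^{(0)})^{-1}$ and $(C_2^{(0)})^{-1}$ respectively preserves the transport equations and achieves $C_i^{(0)} = \mathrm{Id}$. What your approach buys is that it avoids explicitly invoking simple-connectedness, at the cost of importing the parametric-in-$t$ solvability of the transport system. The paper's logarithm argument is more economical here since simple-connectedness of $M_0^\eta$ is free.
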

\begin{proof}
By applying Lemma \ref{restrictionlemma} to the equation \eqref{integraleq}, we deduce there exists a holomorphic matrix function $F'$, such that $F'|_{\partial M_0^{\eta}} = C_2^* C_1|_{\partial M_0^{\eta}}$. We need to prove $F'$ is invertible on $M_0^{\eta}$. 

Firstly, $M_0^{\eta} = [-N, N] \times [0, L]$ in local coordinates ($L$ is the length of the segment of the unit speed geodesic starting at a point $p \in \partial D$, which lies in $M_0$), which is simply-connected. Therefore, since $\det(C_1) \neq 0$ on $M_0^{\eta}$, it is a standard fact that $\det(C_1)$ admits a logarithm: we have a smooth function $\Phi_1$ on $M_0^{\eta}$ such that $\det(C_1) = e^{\Phi_1}$ and similarly we have $\Phi_2$ such that $\det(C_2) = e^{\Phi_2}$. From this, we infer that the variation of the argument of $\det(F')|_{\partial M_0^{\eta}} = e^{\Phi_1 + \overline{\Phi}_2}|_{\partial M_0^{\eta}}$ is zero, since $\Phi_1$ and $\Phi_2$ are honest functions. Therefore, by the argument principle applied to the holomorphic function $\det(F')$, we conclude $F'$ is invertible on the whole of $M_0^{\eta}$. By setting $F = (F')^{-1}$, we are done.\footnote{Moreover, one can show that $\frac{\partial \Phi_1}{\partial \bar{z}} = -\frac{1}{2}\Tr\big((A_1)_1 + i(A_1)_r\big)$ and $\frac{\partial \overline{\Phi}_2}{\partial \bar{z}} = \frac{1}{2}\Tr\big((A_2)_1 + i(A_2)_r\big)$, but we will not need this here.}
\end{proof}

More generally, we have such $F$ depending smoothly on the parameters in the influx boundary $(p, \theta) \in \partial_+ SD$ so we obtain a smooth matrix function $F$ on $[-N, N] \times SM_0$\footnote{The Plemelj-Sokhotski-Privalov formula actually gives $F^{-1}(z) = \frac{1}{2\pi i} \int_{\partial M_0^{\eta}}{\frac{C_2^*(\zeta)C_1(\zeta)}{\zeta - z} d\zeta}$.} such that $F|_{[-N, N] \times  \partial SM_0} = (C_2^*C_1)|_{[-N, N] \times \partial SM_0}$ and $\mathbb{X}(F) = 0$. Then we can redefine the solution $C_2$ to equations \eqref{trans} (parametrised by $(p, \theta) \in \partial_+ SD$), by setting $C_2' = C_2 F^*$. The transport equations will be satisfied again, but more importantly, we must have \eqref{ray} fulfilled with the new $G' (x_1, x, v) = C_1 (p, \theta, x_1, r) (C_2')^*(p, \theta, x_1, r)$ defined analogously as before and:
\begin{equation*}
G'|_{[-N, N] \times \partial SM_0} = (C_1C_2'^{*})|_{[-N, N] \times \partial SM_0} = (C_1F C_2^*)|_{[-N, N] \times \partial SM_0} = Id|_{[-N, N] \times \partial SM_0}
\end{equation*}
by the definition of $F$. Let us relabel the $G'$ back to $G$.

Let us now consider a reduction of the problem to a convex region, i.e. a larger manifold with certain properties. We take $M'$ to be a slightly smaller manifold than $[-N, N] \times M_0$ with corners smoothed out -- for example, we may take a compact simple manifold with boundary $M_0' \subset M_0^\circ$, such that the interior of $[-N, N] \times M_0'$ contains $M$, and take $M'$ to be a smoothed out version of this. Hence $M'$ is homeomorphic to a ball, and the exterior of $M'$ in $[-N, N] \times M_0$ is homeomorphic to an $n$-dimensional annulus. Now we can make the following reduction:

\begin{prop}[Reduction to the convex case]\label{reductionconvex}
Let $U$ and $V$ be two sections of $C^\infty(M';$\\$ \mathbb{C}^{m \times m})$ which solve solve $\Lapl_{A_1, Q_1}U = 0$ and $\Lapl_{A_2, Q_2^*}V = 0$. Then we have 
\begin{align*}
    \int_{M'} \big\langle{(Q_1 - Q_2 + |A_2|^2 - |A_1|^2)U, V}\big\rangle + \int_{M'} \big\langle{(U (dV^*) - (dU)V^*, A_2 - A_1)}\big\rangle = 0
\end{align*}
In particular, if $Q_1 = Q_2 = 0$, then we also have that $\Lambda_{A_1} = \Lambda_{A_2}$ in $M'$.
\end{prop}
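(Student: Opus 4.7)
The plan is to exploit the fact that outside $M$ the extended connections and potentials coincide, so that the $M'$-integral reduces to an $M$-integral, which can then be evaluated using the matrix integration-by-parts identity of Theorem \ref{matidentity} together with the standing hypothesis $\Lambda_{A_1, Q_1} = \Lambda_{A_2, Q_2}$.

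In more detail, I first observe that by the extension construction described just above the proposition, we have $A_1 = A_2$ and (arranging the extension of the potentials similarly) $Q_1 = Q_2$ on the annular region $M' \setminus M$. Consequently $A_2 - A_1$, $Q_1 - Q_2$, and $|A_2|^2 - |A_1|^2$ all vanish pointwise on $M' \setminus M$, so the full integrand
\[
\big\langle (Q_1 - Q_2 + |A_2|^2 - |A_1|^2) U, V\big\rangle + \big\langle U(dV^*) - (dU)V^*,\, A_2 - A_1\big\rangle
\]
is supported in $M$. Hence $\int_{M'}(\cdots) = \int_M (\cdots)$.

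Next, the smooth restrictions $U|_M$ and $V|_M$ satisfy $\Lapl_{A_1, Q_1} U = 0$ and $\Lapl_{A_2, Q_2^*} V = 0$ on $M$, with traces $F := U|_{\partial M}$ and $G := V|_{\partial M}$ in $C^\infty(\partial M; E')$. Applying Theorem \ref{matidentity} on $M$ to this pair, the $M$-integral equals $\big((\Lambda_{A_1, Q_1} - \Lambda_{A_2, Q_2}) F, G\big)_{\partial M}$, which is zero by the standing hypothesis that the two DN maps agree on $M$. This establishes the displayed identity.

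For the ``in particular'' statement, assume $Q_1 = Q_2 = 0$, so that $0$ is automatically not a Dirichlet eigenvalue of either $\Lapl_{A_i}$ on $M'$ and both DN maps on $M'$ are well-defined. Given arbitrary $F', G' \in C^\infty(\partial M'; E')$, solve $\Lapl_{A_1} U = 0$ with $U|_{\partial M'} = F'$ and $\Lapl_{A_2} V = 0$ (note $Q_B^* = 0$) with $V|_{\partial M'} = G'$, producing smooth sections on $M'$ by elliptic regularity. Applying Theorem \ref{matidentity} this time on $M'$ and using the identity just proved gives $\big((\Lambda^{M'}_{A_1} - \Lambda^{M'}_{A_2}) F', G'\big)_{\partial M'} = 0$ for all $F', G'$, from which $\Lambda^{M'}_{A_1} = \Lambda^{M'}_{A_2}$ follows.

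The step that requires the most care is verifying that the boundary contributions arising from applying Theorem \ref{matidentity} to $M$ rather than $M'$ are precisely the DN-map pairing on $\partial M$; this is where the hypothesis on $\partial M$ (not $\partial M'$) is consumed, and where smoothness of $U, V$ across $\partial M$ — which is automatic since they are smooth on the larger domain $M'$ — is essential to avoid any trace-regularity issues.
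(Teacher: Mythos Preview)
Your proof is correct and follows essentially the same approach as the paper: reduce the $M'$-integral to an $M$-integral using that the extended connections and potentials agree outside $M$, then apply Theorem \ref{matidentity} on $M$ together with the hypothesis $\Lambda_{A_1,Q_1}=\Lambda_{A_2,Q_2}$; the final statement follows since $U$ and $V$ are arbitrary. You are simply more explicit about the steps than the paper's one-line argument.
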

\begin{proof}
Recall that we extended $A_1$ and $A_2$ to the whole of $\mathbb{R} \times M_0$ such that $A_1 = A_2$ outside $M$; similarly, we extend $Q_1$ and $Q_2$ to have compact support and such that $Q_1 = Q_2$ outside $M$ (allowed by boundary determination).

Then the proof follows immediately after applying Theorem \ref{matidentity} to the restrictions $U|_{M}$ and $V|_{M}$, which solve the appropriate equations in $M$, and the fact that $A_1 = A_2$ and $Q_1 = Q_2$ outside $M$. The final conclusion follows since $U$ and $V$ were arbitrary.
\end{proof}

Let us denote by $L$ the connected component of $[-N, N] \times \partial M_0$ in $\mathbb{R} \times M_0 \setminus M^\circ$. Furthermore, in this setting, we have the following:

\begin{lemma}\label{Gattheboundary}
We have $G(x_1, x, v)$ equal to the identity for $(x_1, x) \in L$ and $v \in S_xM_0$. In particular, $G$ is equal to identity on the complement of $M'$ in $\mathbb{R} \times M_0$.
\end{lemma}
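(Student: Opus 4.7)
The plan is to exploit the fact that $A_1 = A_2$ outside $M$, which forces the matrix product $C_2^* C_1$ to be holomorphic on each two-dimensional slice $M_0^\eta$ outside $M^\eta$; combined with the construction of the holomorphic matrix $F$ from Lemma~\ref{holo_restrict}, this then forces $G = Id$ on the complement of $M$.

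On a slice $M_0^\eta$ in $(x_1, r)$ coordinates I would compute directly, using \eqref{trans} together with the skew-Hermitian property of the connections,
\begin{equation*}
\frac{\partial}{\partial \bar z}(C_2^* C_1) = \tfrac{1}{2}\, C_2^* \bigl[(A_2 - A_1)_1 + i(A_2 - A_1)_r\bigr] C_1.
\end{equation*}
Since $A_1 = A_2$ on the complement of $M$ by the extension chosen in Subsection~\ref{sec4.5}, the right-hand side vanishes on $M_0^\eta \setminus M^\eta$, so $C_2^* C_1$ is holomorphic in $z = x_1 + ir$ there and agrees with $F^{-1}$ on $\partial M_0^\eta$ by the defining property of $F$. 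Since $F^{-1}$ is holomorphic on all of $M_0^\eta$, the difference $F^{-1} - C_2^* C_1$ is a holomorphic matrix-valued function on the connected planar region $M_0^\eta \setminus M^\eta$ which vanishes on $\partial M_0^\eta$. Applying the Schwarz reflection principle entry-wise across any smooth boundary arc, each entry extends holomorphically to a neighbourhood of the arc and must vanish there; the identity theorem then propagates the vanishing throughout $M_0^\eta \setminus M^\eta$. Substituting $F^{-1} = C_2^* C_1$ back gives
\begin{equation*}
G = C_1 F C_2^* = C_1 (C_2^* C_1)^{-1} C_2^* = Id
\end{equation*}
on $M_0^\eta \setminus M^\eta$. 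Varying $(p, \theta) \in \partial_+ SD$ and using the simplicity of $D$, every $(x_1, x, v)$ with $(x_1, x) \notin M$ corresponds to a slice point outside $M^\eta$, and by continuity this extends to $(x_1, x) \in \partial M \subset L$; the ``in particular'' conclusion follows since the complement of $M'$ is contained in $L$.

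The main obstacle I anticipate is justifying the boundary uniqueness step on the possibly multiply connected region $M_0^\eta \setminus M^\eta$ and handling the corners of the rectangular slice $M_0^\eta$ where $\{x_1 = \pm N\}$ meets $\partial M_0$. Since Schwarz reflection only requires a single smooth boundary arc, and the identity theorem handles the propagation on any connected component of the slice, this is manageable; by Sard's theorem (as used elsewhere in Section~\ref{sec4.5}) $M^\eta$ is a smooth submanifold for almost every $\eta$, so the generic slice geometry is clean and the remaining $\eta$'s follow by continuity.
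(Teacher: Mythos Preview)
Your argument is correct and reaches the same conclusion, but it follows a genuinely different route from the paper. The paper works directly with $G$: on $M_0^\eta\setminus M^\eta$ the transport equation reduces to $2\partial_{\bar z}(G-Id)=[G-Id,(A_1)_1+i(A_1)_r]$, a homogeneous first-order elliptic system with vanishing boundary data, and then invokes the unique continuation principle for such systems (citing \cite{bar}) to force $G-Id\equiv 0$ on $L^\eta$. You instead step back to the pre-redefinition product $C_2^*C_1$, observe it is genuinely \emph{holomorphic} outside $M^\eta$ (since $\tilde A=0$ there), and match it with the holomorphic $F^{-1}$ on $\partial M_0^\eta$; Schwarz reflection across a straight side of the rectangle together with the identity theorem then gives $C_2^*C_1=F^{-1}$ on $L^\eta$, hence $G=C_1FC_2^*=Id$ there. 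What you gain is that your argument is entirely classical complex analysis and avoids the UCP machinery; what the paper's route gains is that it applies verbatim even if the equation for $G$ outside $M^\eta$ has a nontrivial zeroth-order term (as it does here, since $A_1=A_2$ need not be zero there), whereas your route relies on the specific cancellation that makes $C_2^*C_1$ holomorphic.

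One small imprecision worth flagging: when you say the vanishing propagates ``throughout $M_0^\eta\setminus M^\eta$'', strictly speaking the identity theorem only gives this on the connected component $L^\eta$ touching $\partial M_0^\eta$; inner components (if $M^\eta$ has holes) are not reached. This is harmless, since the lemma only asserts $G=Id$ on $L$, and the ``in particular'' statement about the complement of $M'$ concerns points close to $\partial([-N,N]\times M_0)$, whose slice images are manifestly in $L^\eta$. The paper's proof has the same feature.
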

\begin{proof}

Let us fix a point $p \in \partial D$ and the polar coordinate $\eta \in S^{n-2}$ with $(p, \eta) \in \partial_+SD$. We have that $\tilde{A} = 0$ outside $M^\eta$ and it would suffice to show $G = Id$ on the connected component of $\partial M_0^\eta$ in $M_0^\eta \setminus M^\eta$, that we denote by $L^\eta$. In $M_0^{\eta} \setminus M^{\eta}$, the equation \eqref{trans} becomes:
\begin{align*}
2\frac{\partial G}{\partial \bar{z}} = [G, (A_1)_1 + i(A_1)_r]
\end{align*}
and thus we also have $2\frac{\partial G''}{\partial \bar{z}} = [G'', (A_1)_1 + i(A_1)_r]$, where $G'' = G - Id$, with $G''|_{\partial M_0^{\eta}} = 0$. Since $\frac{\partial}{\partial \bar{z}}$ is an elliptic operator and the previous equation is a linear one, we may apply the unique continuity theorem for linear elliptic first order systems (see \cite{bar} for a precise statement) and conclude that $G''(p, \eta, x_1, r) = 0$ for $z \in L^\eta$, since $G'' = 0$ on a codimension one set, thus proving the claim.

More precisely, note that $G''|_{\partial M_0^\eta} = 0$ implies that $dG''|_{\partial M_0^\eta} = 0$ and so we may extend $G''$ by zero slightly outside $M_0^\eta$ to a $C^\infty$ function by elliptic regularity. Then by the mentioned UCP we get $G'' = 0$ on $N^\eta$.
\end{proof}

In particular, we also have $G(x_1, x, v) = Id$ for $(x_1, x)$ in the connected component of $\partial M$ in $L$ (this is non-empty and open in $\partial M$) and $v \in S_xM_0$, by the previous lemma. Call this component $\Gamma$.

Often, the crux of the matter in the X-ray injectivity problems is to prove the independence of the gauge of the velocity variable; the only difference here from the usual problem is that we have a complex derivative $\mathbb{X}$, instead of the usual geodesic vector field $X$. Indeed, we have:

\begin{lemma}\label{Greduction}
If the solution of \eqref{ray} is independent of the velocity variable, then $G$ is a gauge equivalence between $A_1$ and $A_2$ on $E$, with $G|_{\Gamma} = Id$.
\end{lemma}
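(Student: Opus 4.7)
The plan is threefold: (i) use the hypothesis $\partial_v G \equiv 0$ to separate the transport equation \eqref{ray} into its parts of degree $0$ and $1$ in $v$, obtaining the gauge-equivalence identity $dG = GA_2 - A_1 G$ on $\mathbb{R}\times M_0$; (ii) exhibit $GG^*$ as a parallel section of $\operatorname{End}(E)$ equal to the identity on $\Gamma$, and deduce from this that $G$ is unitary; (iii) combine with Lemma \ref{Gattheboundary} for the boundary condition $G|_\Gamma = \operatorname{Id}$.

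For (i), since $G$ depends only on $(x_1,x)$, its lift to $\mathbb{R}\times SM_0$ satisfies $XG = d_{M_0}G(v)$, so $\mathbb{X}G = \partial_{x_1}G + i\, d_{M_0}G(v)$. Expanding the right-hand side of \eqref{ray},
\begin{align*}
-A_1(\partial_{x_1}+iv)G + GA_2(\partial_{x_1}+iv) = \bigl(G(A_2)_{x_1} - (A_1)_{x_1}G\bigr) + i\bigl(GA_2(v) - A_1(v)G\bigr),
\end{align*}
so the equation takes the form $P + iQ(v) = 0$ with $P$ independent of $v$ and $Q(v)$ linear in $v$. Replacing $v\in S_xM_0$ by $-v$ yields $P - iQ(v) = 0$; adding and subtracting then forces $P=0$ and $Q(v)=0$ for every unit $v$, and by linearity $Q$ vanishes on all of $T_xM_0$. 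This gives $\partial_{x_1}G = G(A_2)_{x_1} - (A_1)_{x_1}G$ together with $d_{M_0}G = GA_2|_{TM_0} - A_1|_{TM_0}G$, i.e.\ $dG = GA_2 - A_1G$. This is precisely the identity characterising $G$ as a bundle isomorphism $E\to E$ satisfying $\nabla_{A_1} = G\,\nabla_{A_2}\,G^{-1}$; invertibility of $G$ is clear from $G=C_1C_2^*$ with both $C_j$ invertible by construction.

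For (ii), set $H := GG^*$. Taking adjoints in $dG = GA_2 - A_1G$ and using that $A_1$ and $A_2$ are skew-Hermitian gives $dG^* = G^*A_1 - A_2G^*$, whence
\begin{align*}
dH = (GA_2 - A_1G)G^* + G(G^*A_1 - A_2G^*) = [H, A_1],
\end{align*}
equivalently $D_{A_1}H = dH + [A_1, H] = 0$, where $D_{A_1}$ is the induced connection on $\operatorname{End}(E)$. Thus $H$ is $D_{A_1}$-parallel on the connected manifold $M$; since $H|_\Gamma = \operatorname{Id}$ by Lemma \ref{Gattheboundary}, parallel transport from a point of $\Gamma$ along arbitrary smooth curves forces $H\equiv\operatorname{Id}$ on $M$, so $G$ is unitary. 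Combined with $G|_\Gamma = \operatorname{Id}$, this produces the desired gauge equivalence. The one substantive ingredient is the degree-in-$v$ separation in (i); the unitarity step and the boundary condition are then automatic consequences of the skew-Hermitian structure and of the earlier Lemma \ref{Gattheboundary}, so the content of the lemma is essentially this algebraic splitting.
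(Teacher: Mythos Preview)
Your proof is correct and follows essentially the same route as the paper: the key step (i)---the $v\mapsto -v$ splitting of \eqref{ray} into its $v$-independent and $v$-linear parts to obtain $dG = GA_2 - A_1G$---is exactly the paper's argument. Your step (ii), the explicit verification of unitarity via the parallel section $GG^*$, is a worthwhile addition that the paper's proof glosses over (it simply records the pullback identity $G^*(A_1)=A_2$ and the boundary condition $G|_\Gamma = Id$ without separately checking that $G$ preserves the Hermitian structure).
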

\begin{proof}
It is easy to show the following fact about the geodesic vector field: $X(x, v) f = df(v)$, when $f$ is independent of the velocity variable. Therefore, we can write down two equations out of \eqref{ray}, one for $v$ and the other for $-v$, respectively:
\begin{align*}
\frac{\partial G}{\partial x_1} + idG(v) = -A_1\Big(\frac{\partial}{\partial x_1}\Big)G - iA_1(v)G + GA_2\Big(\frac{\partial}{\partial x_1}\Big) + iGA_2(v)\\
\frac{\partial G}{\partial x_1} - idG(v) = -A_1\Big(\frac{\partial}{\partial x_1}\Big)G + iA_1(v)G + GA_2\Big(\frac{\partial}{\partial x_1}\Big) - iGA_2(v)
\end{align*}
by adding and subtracting the above equations, we easily get that $dG = -A_1G + GA_2$ or equivalently $G^*(A_1) = G^{-1}dG + G^{-1}A_1G = A_2$, which together with $G|_{\Gamma} = Id$ finishes the proof.
\end{proof}

Ideally we would like to reduce this to an ordinary X-ray injectivity problem on $M_0$ (technically, \eqref{ray} would become an injectivity problem for $G - Id$, with the inhomogeneous term equal to $\tilde{A}^1 + i\tilde{A}^r$) in some process of excluding the $x_1$ variable. This is indeed possible for the line bundle case (similar to what we will see in the next chapter) -- it involves the procedure of taking the logarithm of $G$ and applying the Fourier transform. Moreover, let us emphasise that all the information we obtained from the DN map through CGO solutions, we managed to pack into a single boundary condition: $G(x_1, x, v) = Id$ for $(x_1, x) \in \Gamma$ and $v \in S_xM_0$. The main problem is then reduced to an injectivity problem of an $X$-ray transform and is stated separately as Question \ref{conjecture2}.

It turns out that, under additional assumptions, we have $G$ equal to the identity on the whole of $\partial M$: 
\begin{prop}\label{Gextension}
\emph{If} the answer to Question \ref{conjecture2} is positive, and $G^*(Q_1) = Q_2$ \footnote{In particular, note that the condition on potentials holds if $Q_1 = Q_2 = 0$.}, then $G|_{\partial M} = Id$.
\end{prop}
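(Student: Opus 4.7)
\noindent \emph{Proof plan.} The strategy is to promote $G$ to a $v$-independent gauge equivalence via the assumed positive answer to Question \ref{conjecture2} and Lemma \ref{Greduction}, and then to propagate the known value $Id$ from the outer boundary $\partial M'$ (where Lemma \ref{Gattheboundary} already gives it) down to $\partial M$ by transporting $G$ along curves in the shell $M' \setminus M^\circ$ via a linear ODE.

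First, I would apply the assumed positive answer to Question \ref{conjecture2} to our $G$, which yields that $G$ is independent of the velocity variable and hence descends to a smooth map $G : \mathbb{R} \times M_0 \to GL(m, \mathbb{C})$. Lemma \ref{Greduction} then upgrades this to $G^* A_1 = A_2$, i.e.\ $dG = -A_1 G + G A_2$, together with $G|_\Gamma = Id$. Combined with the standing hypothesis $G^*(Q_1) = Q_2$, this makes $G$ a gauge equivalence of the full operator, so that $G^{-1} \Lapl_{A_1, Q_1} G = \Lapl_{A_2, Q_2}$ (a natural consistency statement which will also be useful in the degenerate case discussed below).

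Because the extensions used in the preceding CGO construction satisfy $A_1 = A_2$ (and $Q_1 = Q_2$) outside $M$, on the open set $N := (\mathbb{R} \times M_0) \setminus M^\circ$ the gauge equivalence equation collapses to the commutator transport
\[
dG = [A_1, G] \quad \text{on } N.
\]
From Lemma \ref{Gattheboundary} we already have $G \equiv Id$ on the complement of $M'$, hence $G|_{\partial M'} = Id$ by continuity. Given an arbitrary $q \in \partial M$, pick a smooth curve $\gamma : [0,1] \to M' \setminus M^\circ$ with $\gamma(0) \in \partial M'$, $\gamma(1) = q$, and $\gamma((0,1))$ contained in the interior of the shell; such a curve exists because $M' \setminus M^\circ$ is path-connected, being a connected collar between $\partial M$ and $\partial M'$ in the CTA setup. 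Restricting the equation to $\gamma$ yields the linear matrix ODE
\[
\frac{d}{dt} G(\gamma(t)) = [A_1(\dot\gamma(t)), G(\gamma(t))], \qquad G(\gamma(0)) = Id,
\]
which is solved by the constant $G \equiv Id$; uniqueness of solutions to linear ODEs then forces $G(q) = Id$. Since $q \in \partial M$ was arbitrary, $G|_{\partial M} = Id$.

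The main delicate point in this plan is the topological claim that $M' \setminus M^\circ$ is path-connected, i.e.\ that every component of $\partial M$ is reachable from $\partial M'$ by a curve avoiding $M^\circ$. In the standard CTA setting and with $M'$ constructed as a smoothed enlargement of $[-N, N] \times M_0'$ this is routine, but pathological embeddings could in principle leave an isolated ``cavity'' component of $\partial M$ unreachable within the shell; in such a configuration one would have to invoke the full-operator gauge equivalence (which is precisely where the hypothesis $G^*(Q_1) = Q_2$ becomes essential) together with the boundary determination results of Section 8 in \cite{LCW} to treat the remaining components directly.
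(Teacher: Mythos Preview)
Your argument misses the actual content of the proposition. The point is to pass from $G|_\Gamma = Id$ (already given by Lemmas \ref{Gattheboundary} and \ref{Greduction}) to $G|_{\partial M} = Id$; this is nontrivial precisely when $\partial M \neq \Gamma$, i.e., when $(\mathbb{R}\times M_0)\setminus M^\circ$ is disconnected and $M$ encloses interior cavities. In that situation $M' \setminus M^\circ$ is \emph{not} path-connected: any cavity $C$ is a component of this set disjoint from the outer collar touching $\partial M'$, so no curve from $\partial M'$ to a cavity boundary point $q$ exists within the shell. Your ODE argument therefore only reproduces $G|_\Gamma = Id$, and your fallback to ``boundary determination'' does not help---boundary determination yields $\iota^*_{\partial M}(A_1 - A_2) = 0$, which says nothing about the value of $G$ on cavity boundaries. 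When there are no cavities, $\Gamma = \partial M$ and there is nothing to prove.

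The paper's proof goes through the \emph{interior} of $M$ and crucially uses the standing hypothesis $\Lambda_{A_1,Q_1} = \Lambda_{A_2,Q_2}$, which your main argument never invokes. For arbitrary boundary data $F$ one solves $\Lapl_{A_1,Q_1}U = 0$ and $\Lapl_{A_2,Q_2}V = 0$ with $U|_{\partial M} = V|_{\partial M} = F$; the DN map equality gives $\partial_\nu U = \partial_\nu V$ on $\partial M$. Setting $U' = GV$, the gauge relation $G^*A_1 = A_2$ together with the hypothesis $G^*(Q_1) = Q_2$ yields $\Lapl_{A_1,Q_1}U' = 0$, and one checks $U'|_\Gamma = F|_\Gamma$ and $\partial_\nu U'|_\Gamma = \partial_\nu U|_\Gamma$. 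Unique continuation for the elliptic system then forces $U \equiv U'$ on $M$, so $GF = F$ on all of $\partial M$; since $F$ is arbitrary, $G|_{\partial M} = Id$. This interior UCP step is exactly where the hypothesis $G^*(Q_1) = Q_2$ is needed.
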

\begin{proof}
By Lemma \ref{Gattheboundary}, we have that $G$ is equal to the identity on the outside of $M'$; thus, by the hypothesis and Lemma \ref{Greduction} we have $G^*(A_1) = A_2$. Moreover, we have $G|_{\Gamma} = Id$ and we want to prove that $G|_{\partial M} = Id$.

Let $F \in C^\infty(\partial M; \mathbb{C}^{m \times m})$ and assume smooth $U$ and $V$ solve $\Lapl_{A_1, Q_1} U = 0$ and $\Lapl_{A_2, Q_2}V = 0$ with the boundary condition $U|_{\partial M} = V|_{\partial M} = F$. By the DN map equality and the assumption on the gauges of $A_1$ and $A_2$ (normal components equal to zero near $\partial M$), we have $\partial_\nu U|_{\partial M} = \partial_\nu V|_{\partial M}$. The hypothesis on $G$ implies that $U' := GV$ satisfies $\Lapl_{A_1, Q_1} U' = 0$ and $U'|_{\Gamma} = F|_{\Gamma}$. Moreover, we have on $\Gamma$:
\begin{align*}
    \partial_\nu (U') = \iota_\nu \big((dG) V + G (dV)\big) = \iota_\nu \big(GA_2 V - A_1 GV + G(dV)\big) = \partial_\nu(V) = \partial_\nu (U)
\end{align*}
So by the UCP for elliptic systems (see Remark \ref{UCP}), we have $U \equiv U'$ and so $G|_{\partial M} \equiv Id$, as $F$ was arbitrary.
\end{proof}

\begin{rem}\rm
Notice that if $Q_1 = Q_2 = 0$, Proposition \ref{reductionconvex} implies that $\Lambda_{A_1} = \Lambda_{A_2}$ on $M'$, so the problem is reduced to proving uniqueness (up to gauges) on $M'$. More precisely, a gauge $G$ between $A_1$ and $A_2$ on $M'$, equal to the identity on $\partial M'$, would by uniqueness of first order equations and $G^*(A_1) = A_2$ imply $G = Id$ on $\Gamma$, so we may apply Proposition \ref{Gextension} to get $G = Id$ on $\partial M$.
\end{rem}

\begin{rem}\rm
There is a way of formulating Question \ref{conjecture2} in a more compact way. Namely, one could define the unitary connection $\hat{A}(R) = A_1R - RA_2$ on the endomorphism bundle of $E$ to get the form of the equation to $\mathbb{X}G + \hat{A}\big(\frac{\partial}{\partial x_1} + iv\big)G = 0$. Then we may formulate the problem in terms of just a single connection. 
\end{rem}

\begin{rem}\rm
If $A_1$ and $A_2$ are independent of the $x_1$ variable (on $M$) in the setting of Question \ref{conjecture2}, then we would have $A_1 \equiv A_2$ by the boundary condition and therefore $G \equiv Id$. 
\end{rem}

Therefore, the problem is reduced to a new kind of a non-abelian X-ray transform, Question \ref{conjecture2}. We leave it as one of the future projects to either further reduce the problem to an attenuated X-ray transform on $M_0$ or apply some other method to prove independence of the velocity variable directly. However, one thing is expected: methods from complex analysis and geometry could be useful to prove Question \ref{conjecture2}. This is supported by the work of Eskin (see Section 5 in \cite{Eskin}), where he proves Conjecture \ref{conjecture1} in the Euclidean metric case, by ``moving around" the $x_1$ direction, which can be interpreted as having the equations \eqref{trans} for essentially all planes going through points in $M$. In short, by generating a holomorphic family of such planes, Eskin obtains that $G$ is holomorphic with respect to this variable and hence constant by Liouville's theorem; such families are dense enough to guarantee $G$ is constant in the vertical directions and hence independent of $v$. Unlike the Euclidean metric case, in our situation we have a fixed $x_1$ direction, so we may also expect a different approach to be used.

\section{Gaussian Beams}\label{sec5}

In this section, we will construct the Gaussian Beam quasimodes (or generalised approximate eigenfunctions) that concentrate near geodesics, for the purposes of constructing the CGO solutions in the case where the transversal manifold is not necessarily simple. Moreover, we will use the method described in \cite{CTA}, where it was used in the case of a scalar potential and no first order term -- here we also consider the vector case and a first order term. More precisely, we consider CGO solutions of the form $e^{-sx_1} (v_s + r_s)$ for the general operator $\Delta + X + q$, where $s = \tau + i \lambda$, with $\tau$ and $\lambda$ real; we want to guarantee certain behaviour of the solutions in the limit as $\tau \to \infty$. In Section \ref{sec5.1} we construct the Gaussian Beams and in Section \ref{sec5.2} we use them to construct the CGOs. We start by motivating our definition:

\begin{itemize}
\item Since $v_s$ is the main part of the solutions we would like to have $e^{sx_1}(\Delta + X + Q) e^{-sx_1} v_s$ small in $L^2$ norm.
\item The solutions should concentrate along geodesics in a certain way.
\item Simple manifold case: this is covered in Proposition \ref{simplegauss} below and motivates the general transversal manifold case.
\end{itemize}

Throughout the section, we are working in the setting of $M \Subset (\mathbb{R}\times M_0, e\oplus g)$ with $\dim M_0 + 1 = \dim M = n \geq 3$.
\begin{definition}[Generalised quasimodes]
Given a family of functions $v_s$ on $M$ depending on a parameter $s = \tau + i\lambda$ ($\tau, \lambda \in \mathbb{R}$), we say that $v_s$ is a \textit{generalised approximate eigenfunction} or \textit{generalised quasimode} if $\lVert{v_s}\rVert_{L^2(M)} = O(1)$ as $\tau \to \infty$ and:
\begin{align*}
\Big \lVert{\Big((\Delta_{g} + X + q) + s(2\frac{\partial}{\partial x_1} - X_1) - s^2\Big)v_s}\Big \rVert_{L^2(M)} &= \lVert{e^{sx_1}(\Delta + X + q)e^{-sx_1}v_s}\rVert_{L^2(M)}\\
 &= o(|\tau|)
\end{align*}
\end{definition}

\begin{rem}\rm
The main difference between this and the definition of a quasimode found in \cite{CTA} is that the definition of a quasimode is independent of the $x_1$ direction, i.e. $v_s$ there was a function defined on $M_0$ only and it was only asked that $\lVert{(\Delta - s^2)v_s}\rVert_{L^2(M_0)} = o(|\tau|)$. This produces certain problems for us in the sense that the twisted Laplacian $d_A^*d_A$ now splits in a non-trivial way in an $x_1$ component, $x'$ component and a mixed component, unlike the ordinary Laplacian, $\Delta_{e\oplus g} = -|g|^{-1/2}\frac{\partial}{\partial x_1}\big(|g|^{1/2} \frac{\partial}{\partial x_1}\big) + \Delta_{g}$. As we will shortly see, this amounts to solving a certain $\bar{\partial}$-equation, which complicates things. 
\end{rem}


\subsection{Main construction of Gaussian Beams}\label{sec5.1}

We will focus on constructing generalised quasimodes. A complex vector field $X$ on $M$ is a skew-Hermitian vector field if $X^* = -X$ in the complexified tangent bundle $T_\mathbb{C}M$; moreover, we have the notion of a skew-Hermitian matrix of vector fields, which is a clear generalisation of the previously defined term. As a warm up for the general construction, we will first deal with the easy case of line bundles and $M_0$ simple, which comes out of our work in Section \ref{CGOsimple} -- in this case we have an ansatz for the eikonal equation.

Recall also that a unit speed geodesic $\gamma:[0, L] \to M$ is called \textit{non-tangential} if $\gamma(0), \gamma(L) \in \partial M$ and $\dot{\gamma}(0), \dot{\gamma}(L)$ are not parallel to $\partial M$, with $\gamma(t)$ in the interior of $M$ for $0 < t < L$.

\begin{prop}\label{simplegauss}
Let $(M_0, g)$ be a simple manifold and $\gamma: [0, L] \to M_0$ a non-tangential geodesic and let $\lambda$ be a real parameter. Let $X$ and $Y$ be two smooth skew-Hermitian vector fields on $M$. Then there exists a family of generalised quasimodes satisfying the above conditions, i.e. if $s = \tau + i \lambda$, then there exists $v_s, \omega_s \in C^{\infty}(\mathbb{R} \times M_0)$ such that:
\begin{align*}
\Big \lVert{\Big((\Delta_{g} + X + q) + s(2\frac{\partial}{\partial x_1} - X_1) - s^2\Big)v_s}\Big \rVert_{L^2(M)} = o(|\tau|) \quad \text{and} \quad \lVert{v_s}\rVert_{L^2(M)} = O(1)\\
\Big \lVert{\Big((\Delta_{g} + Y + q) - s(2\frac{\partial}{\partial x_1} - Y_1) - s^2\Big)\omega_s}\Big \rVert_{L^2(M)} = o(|\tau|) \quad \text{and} \quad \lVert{\omega_s}\rVert_{L^2(M)} = O(1)
\end{align*}
as $\tau \to \infty$ and for each $\phi \in C(M_0)$ and $x'_1 \in \mathbb{R}$ we have:
\begin{align*}
\lim_{\tau \to \infty} \int_{\{x'_1\} \times M_0} v_s \bar{\omega}_s \phi dV_{g} =  \int_0^L e^{2\lambda t} e^{\Phi_1 + \bar{\Phi}_2}\phi(\gamma(t)) dt
\end{align*}
where $\Phi_1$ and $\Phi_2$ are smooth on $\mathbb{R} \times [0, L]$ and satisfy the following equations:\footnote{In these equations, we extend the domain of definition of $X$ and $Y$ from $M$ to $\mathbb{R} \times M_0$ smoothly to compactly supported vector fields and with a slight abuse of notation still denote them the same.}
\begin{align*}
\left(\frac{\partial}{\partial x_1} + i \frac{\partial}{\partial r}\right) (\Phi_1) = \frac{1}{2}(X_1 + iX_r) \quad \text{and} \quad \left(-\frac{\partial}{\partial x_1} + i \frac{\partial}{\partial r}\right) (\Phi_2) = \frac{1}{2}(-Y_1 + iY_r)
\end{align*}
\end{prop}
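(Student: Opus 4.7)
The approach is to bootstrap the explicit CGO construction of Section \ref{CGOsimple} to produce generalised quasimodes, then force concentration along $\gamma$ by a soft angular cutoff. First, extend $M_0$ to a slightly larger simple manifold $D$ as in Subsection \ref{4.1} and choose a pole $\omega\in\partial D$ so that $\gamma$ is a radial geodesic from $\omega$; in polar coordinates $(r,\theta)$ on $D$ centered at $\omega$, one has $\gamma\subset\{\theta=\theta_0\}$ for a fixed $\theta_0\in S^{n-2}$, with $r(\gamma(t))=r_0+t$ and $r_0>0$. Because the ambient metric is the product $e\oplus g_0$, both phases $\Psi_\pm=x_1\pm ir$ satisfy the complex eikonal $|d\Psi_\pm|^2_g=0$, and the natural ansätze are
\[v_s=e^{-sir}\,|g|^{-1/4}e^{\Phi_1(x_1,r,\theta)}\chi_\tau(\theta),\qquad \omega_s=e^{-sir}\,|g|^{-1/4}e^{\Phi_2(x_1,r,\theta)}\chi_\tau(\theta),\]
so that the associated CGO-type objects $e^{-sx_1}v_s=e^{-s\Psi_+}(\cdots)$ and $e^{sx_1}\omega_s=e^{s\Psi_-}(\cdots)$ use the two distinct roots of the eikonal, which forces $v_s\bar\omega_s$ to carry only the bounded exponential $e^{2\lambda r}$.

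\textbf{Transport.} Applying Lemma \ref{conjugate} to conjugate $\Delta+X+q$ by $e^{-s\Psi_+}$, and performing the same integrating substitution $a=b|g|^{-1/4}$ used in Section \ref{CGOsimple}, the $O(s)$ coefficient is killed precisely when
\[(\partial_{x_1}+i\partial_r)\Phi_1=\tfrac12(X_1+iX_r),\]
and the analogous computation for $\omega_s$ using $\Psi_-$ gives $(\partial_{x_1}-i\partial_r)\Phi_2=\tfrac12(Y_1-iY_r)$, which are exactly the equations in the statement. After extending $X,Y$ compactly to $\mathbb{R}\times D$, each is an inhomogeneous $\bar\partial$ (respectively $\partial$) equation in the complex variable $z=x_1+ir$ with smooth, compactly supported right–hand side depending smoothly on the parameter $\theta$, so the Cauchy transform (as in Subsection \ref{paramsoln}) furnishes globally defined smooth solutions $\Phi_1,\Phi_2\in C^\infty(\mathbb{R}\times M_0)$; restricted to the line $\theta=\theta_0$ these give the claimed $\Phi_1,\Phi_2\in C^\infty(\mathbb{R}\times[0,L])$.

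\textbf{Concentration and error.} Fix $\chi\in C_c^\infty(\mathbb{R}^{n-2})$ with $\int|\chi|^2=1$ and an exponent $\alpha\in(0,1/2)$, and set $\chi_\tau(\theta)=\tau^{(n-2)\alpha/2}\chi(\tau^\alpha(\theta-\theta_0))$ in local coordinates on $S^{n-2}$ near $\theta_0$. Scaling gives $\|\chi_\tau\|_{L^2}=1$ and $\|\partial^k\chi_\tau\|_{L^2}=O(\tau^{k\alpha})$, so $\|v_s\|_{L^2(M)}=O(1)$ is immediate. Because the eikonal equation annihilates the $s^2$ term and the transport equation annihilates the $s$ term in $e^{sx_1}(\Delta+X+q)e^{-sx_1}v_s$, the residue reduces to $e^{-sir}(\Delta+X+q)(|g|^{-1/4}e^{\Phi_1}\chi_\tau)$, whose $L^2$ norm is dominated by $\|\partial^2\chi_\tau\|_{L^2}=O(\tau^{2\alpha})=o(|\tau|)$ thanks to $\alpha<1/2$; the same holds for $\omega_s$. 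Finally, $v_s\bar\omega_s=e^{2\lambda r}|g|^{-1/2}e^{\Phi_1+\bar\Phi_2}\chi_\tau^2$, and the weak convergence $\chi_\tau^2\,d\theta\rightharpoonup\delta_{\theta_0}$ as $\tau\to\infty$ collapses the $\theta$-integral, yielding the one-dimensional integral along $\gamma$ after the reparametrization $r=r_0+t$ (the resulting constant factor $e^{2\lambda r_0}$ is absorbed by shifting $\Phi_1$ by a constant, which is allowed since $\Phi_1$ is determined by its transport equation only up to an additive holomorphic function). The main delicate point is the uniform smoothness of the Cauchy-transform solutions $\Phi_1,\Phi_2$ in the angular parameter $\theta$, which is routine here because $X,Y$ are compactly supported and smooth; this stands in contrast to the genuine Gaussian-beam construction, with its second-order transverse phase, needed in the non-simple case of Subsection \ref{sec5.1}.
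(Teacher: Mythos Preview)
Your proof is correct and follows essentially the same approach as the paper: the ansatz $v_s=e^{-isr}|g|^{-1/4}e^{\Phi_1}\chi_\tau(\theta)$ (and analogously for $\omega_s$), the use of Lemma~\ref{conjugate} with the phases $x_1\pm ir$ to solve eikonal and transport, the Cauchy transform for $\Phi_1,\Phi_2$, and the angular delta-approximation $\chi_\tau^2\,d\theta\rightharpoonup\delta_{\theta_0}$ for concentration are exactly what the paper does. The only cosmetic differences are that the paper writes the angular cutoff abstractly as $b_\tau$ with $\|b_\tau\|_{W^{2,\infty}}=O(\tau^\alpha)$, $\alpha<1$, whereas you give an explicit rescaling with $\alpha<1/2$ (yielding the same $o(|\tau|)$ bound), and the paper uses the phase $\rho'=-x_1+ir$ for $\omega_s$ while you use the equivalent $\Psi_-=x_1-ir$; your extra remark about the offset $r=r_0+t$ and the freedom to add an (anti)holomorphic constant to $\Phi_1$ makes explicit a point the paper leaves implicit.
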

\begin{proof}
As in Section \ref{4.1}, consider a simple manifold $D$ which contains $M_0$ and a point $p \in D$ such that $\mathbb{R} \times \{p\}$ is disjoint from $M$ and consider the global polar coordinate system at this point. Furthermore, we proceed by picking a different conjugating exponent -- we let $\rho = x_1 + ir$. By Lemma \ref{conjugate}:
\begin{align*}
e^{s\rho}\Big(\Delta + X + q\Big)e^{-s\rho} u_s = (\Delta + X + q)u_s - s \Big(\Delta \rho + X(\rho) - 2\langle{d \rho, d \cdot}\rangle\Big)u_s - s^2|d\rho|^2 u_s
\end{align*}
One wants to have a handle on the size of the right hand side, so one equates the linear and the quadratic terms in $s$ to zero; this is done in Section \ref{CGOsimple}. The same construction gives us $u_s = |g|^{-1/4} \cdot a \cdot b_{\tau}(\theta)$, where $a$ and $b_\tau \in C^{\infty}(S^{n-2})$ are chosen such that:
\begin{gather*}
\left(\frac{\partial}{\partial x_1} + i \frac{\partial}{\partial r}\right) (a) = \frac{1}{2}(X_1 + iX_r) a \\
\lVert{b_{\tau}}\rVert^2_{L^2(S^{n-2})} = 1, \quad \lVert{b_{\tau}}\rVert^2_{W^{2, \infty}(S^{n-2})} = O(\tau^{\alpha}) \quad \text{and} \quad |b_\tau|^2 dS \to \delta_{\theta_0}
\end{gather*}
i.e. $b_\tau$ is a $C^{\infty}$ approximation to the delta function, with $\alpha < 1$; here $dS$ is the volume element of $S^{n-2}$. We pick $a$ of the form $e^{\Phi_1}$, so that $\Phi_1$ satisfies the equation: 
\begin{align*}
\left(\frac{\partial}{\partial x_1} + i \frac{\partial}{\partial r}\right) (\Phi_1) = \frac{1}{2}(X_1 + iX_r)
\end{align*}
Now, given $u_s$ as above, we set $v_s = e^{-isr}u_s$:
\begin{align*}
e^{sx_1}(\Delta + X + q)e^{-sx_1} v_s &= e^{-isr}e^{s \rho}(\Delta + X + q)e^{-s\rho} u_s\\
 &= e^{-isr} (\Delta + X + q) \Big(|g|^{-1/4} \cdot a \cdot b_{\tau}(\theta)\Big) =: f
\end{align*}
By using the properties of $b_{\tau}$ and the boundedness of other factors, we see that $f$ is clearly equal to $O(\tau^{\alpha})$ in $L^2(M)$ with $\alpha<1$. But this exactly means that $v_s$ is a generalised approximate eigenfunction. Analogously we construct the $\omega_s$ function with respect to $Y$, but with one difference in mind -- we take $-x_1$ to be the Carleman weight (this will be important in the integral identity). Moreover, we have:
\begin{align*}
\int_{\{x'_1\} \times M_0} v_s \bar{\omega}_s \phi dV_{g} \to \int_0^L e^{2\lambda r} e^{\Phi_1 + \bar{\Phi}_2} \phi(\gamma(r)) dr
\end{align*}
when $\tau \to \infty$, for each $x'_1$, by using that the volume element on $M_0$ is $dV_{g_0} = |g|^{\frac{1}{2}}dx_2 \wedge \dotso \wedge dx_n$ and the concentration properties of $b_\tau$.
\end{proof}
Now we are ready to make the passage to the case of the transversal manifold being \textit{non-simple}, with the previous proposition giving us some intuition. Most of the proof we are about to see is analogous to the proof of Proposition 3.1 in \cite{CTA}. The main difference is that, when constructing the amplitude $a$ in $v_s = e^{is \Theta} a$, we do not get an ordinary differential equation -- we get that $a$ satisfies a certain $\bar{\partial}$ equation. This complicates the construction of $a$ slightly and uses the properties of $\bar{\partial}$ equations we already discussed in Section \ref{paramsoln}. Moreover, the derivation of the limit integral is also more involved. We will prove the following theorem for line bundles first and then generalise to all vector bundles in a series of results after it:

\begin{theorem}[Main construction of the Gaussian Beams]\label{main}
Let $\gamma: [0, L] \to M_0$ be a non-tangential geodesic and let $\lambda$ be a real parameter, with $M_0$ \textit{any compact manifold with boundary}. Let $X$ and $Y$ be two smooth skew-Hermitian vector fields on $M$, which we extend to compactly supported vector fields on $\mathbb{R}\times M_0$ (still denoted $X$ and $Y$). Then there exists a family of generalised quasimodes satisfying the above conditions, i.e. if $s = \tau + i \lambda$, then there exists $v_s, \omega_s \in C^{\infty}(J_0 \times M_0)$, where $J_0 = [-N_0, N_0]$ for some large positive integer $N_0$, such that:
\begin{align*}
\Big \lVert{\Big((\Delta_{g} + X + q) + s(2\frac{\partial}{\partial x_1} - X_1) - s^2\Big)v_s}\Big \rVert_{L^2(J_0 \times M_0)} = o(|\tau|) \quad \text{and} \quad \lVert{v_s}\rVert_{L^2(J_0 \times M_0)} = O(1)\\
\Big \lVert{\Big((\Delta_{g} + Y + q) - s(2\frac{\partial}{\partial x_1} - Y_1) - s^2\Big)\omega_s}\Big \rVert_{L^2(J_0 \times M_0)} = o(|\tau|) \quad \text{and} \quad \lVert{\omega_s}\rVert_{L^2(J_0 \times M_0)} = O(1)
\end{align*}
as $\tau \to \infty$ and for each $\phi \in C(M_0)$ and $x'_1 \in [-N_0, N_0]$ we have:
\begin{align*}
\lim_{\tau \to \infty} \int_{\{x'_1\} \times M_0} v_s \bar{\omega}_s \phi dV_{g} =  \int_0^L e^{-2\lambda t} e^{\Phi_1 + \bar{\Phi}_2}\phi(\gamma(t)) dt
\end{align*}
where $\Phi_1$ and $\Phi_2$ are smooth on $\mathbb{R} \times [0, L]$ and satisfy the following equations:
\begin{align}\label{fijne}
\left(\frac{\partial}{\partial x_1} - i \frac{\partial}{\partial r}\right) (\Phi_1) = \frac{1}{2}(X_1 - iX_r) \quad \text{and} \quad \left(\frac{\partial}{\partial x_1} - i \frac{\partial}{\partial r}\right) (\bar{\Phi}_2) = \frac{1}{2}(-Y_1 + iY_r)
\end{align}
Moreover, the following limit holds for $v_s$ and $\omega_s$ and any one form $\alpha$ on $M_0$:
\begin{align*}
\lim_{\tau \to \infty} \frac{1}{\tau} \int_{\{x'_1\} \times M_0} \langle{\alpha, dv_s}\rangle\bar{\omega}_s \phi dV_g &= \int_0^L i \alpha(\dot{\gamma}(t)) e^{\Phi_1 + \bar{\Phi}_2}e^{-2 \lambda t} \phi(\gamma(t)) dt\\
\lim_{\tau \to \infty} \frac{1}{\tau} \int_{\{x'_1\} \times M_0} \langle{\alpha, d\bar{\omega}_s}\rangle v_s \phi dV_g &= -\int_0^L i \alpha(\dot{\gamma}(t)) e^{\Phi_1 + \bar{\Phi}_2}e^{-2 \lambda t} \phi(\gamma(t)) dt
\end{align*}
\end{theorem}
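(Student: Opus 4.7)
My plan is to mimic the WKB/Gaussian beam construction from Section 3 of \cite{CTA}, with the essential new feature that the first-order term $X$ forces the leading transport equation to become a $\bar\partial$-type equation in the $(x_1,t)$-strip rather than an ODE along $\gamma$; this is precisely the equation type \eqref{transport2}/\eqref{fijne}, which I already know how to solve by the Cauchy-operator methods of Section \ref{paramsoln}. The ansatz will be
\[
v_s \;=\; \tau^{(n-2)/4}\, e^{is\Theta(t,y)}\, a(x_1,t,y;s),
\]
together with an analogous $\omega_s = \tau^{(n-2)/4}\, e^{is\Theta'(t,y)}\, a'(x_1,t,y;s)$ with the \emph{same} sign of $i$ in the phase; the desired real weight then appears automatically, since on $\gamma$ one has $v_s\bar\omega_s \sim \tau^{(n-2)/2}\, e^{i(s-\bar s)t}\, a\bar a' = \tau^{(n-2)/2}\, e^{-2\lambda t}\, a\bar a'$. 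Substituting into $L_s := e^{sx_1}(\Delta+X+q)e^{-sx_1}$ and collecting powers of $s$ produces the eikonal equation $|d\Theta|^2_{g_0}=1$ at order $s^2$ and, at order $s^1$, the transport operator which on $\gamma$ reads
\[
2(\partial_{x_1}-i\partial_t)a \;-\;(X^1-iX^t)\,a \;+\; i(\Delta_{g_0}\Theta)\big|_\gamma\, a \;=\; 0,
\]
whose form matches \eqref{fijne} after peeling off the Gaussian-beam ODE factor.

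For the phase, I will cover $\gamma([0,L])$ by finitely many Fermi-coordinate charts $(t,y)$ on $M_0$, with metric $dt^2+h_{ij}(t,y)dy^i dy^j$ and $h_{ij}(t,0)=\delta_{ij}$. Inside each chart I construct $\Theta(t,y)=t+\frac{1}{2}\langle H(t)y,y\rangle+O(|y|^3)$ by Taylor-expanding the eikonal equation to arbitrary order in $y$; the quadratic coefficient $H(t)$ solves a matrix Riccati ODE along $\gamma$ and admits a global complex symmetric solution with $\im H(t)>0$ on $[0,L]$, yielding the Gaussian concentration $\im\Theta \gtrsim |y|^2$. This is exactly the Gaussian-beam phase from Section 3 of \cite{CTA}; crucially, it is insensitive to conjugate points and makes no use of simplicity, which is why the non-simple case can be treated.

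For the amplitude I expand $a=\sum_{k=0}^{N}(is)^{-k}a_k$ and solve each transport equation to vanishing of sufficient order on $\gamma$. On $\gamma$ I take $a_0\big|_\gamma(x_1,t) = f(t)\, e^{\Phi_1(x_1,t)}$ where $\Phi_1$ is obtained by applying the Cauchy operator to a compactly supported extension of $X^1-iX^t$ as in Section \ref{paramsoln}; the scalar factor $f(t)$ is then determined by the classical Gaussian-beam ODE $f'(t) = \frac{1}{2}(\Delta_{g_0}\Theta)|_\gamma\,f(t)$, which absorbs the geometric divergence correction from the phase. Higher amplitudes $a_k$ solve inhomogeneous versions of the same equation and are built recursively. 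Each $a_k$ is then extended off $\gamma$ as a Taylor polynomial in $y$ multiplied by a smooth cut-off supported in the Fermi chart, with enough transverse Taylor data so that the WKB residual has $L^2$-norm $o(|\tau|)$. The construction for $\omega_s$ is entirely parallel, producing $a'|_\gamma = g(t)\,e^{\Phi_2(x_1,t)}$, and conjugating its $\Phi_2$-equation recovers the formula for $\bar\Phi_2$ in \eqref{fijne} using that $Y$ is skew-Hermitian.

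The global $v_s,\omega_s$ are assembled via a partition of unity; since $\gamma$ may self-intersect or revisit previously covered regions (because $M_0$ is not simple), each fresh pass contributes a separate Gaussian tube in $y$, and the $e^{-\tau\langle\im H(t)y,y\rangle}$ concentration ensures that these passes are independent modulo $O(\tau^{-\infty})$ errors. The bound $\|v_s\|_{L^2}=O(1)$ and the residual bound $\|L_s v_s\|_{L^2}=o(|\tau|)$ both reduce to Gaussian integrals in $y$ producing a factor $\tau^{-(n-2)/2}$ that exactly cancels the normalisation; choosing $N$ large enough kills the WKB residue. The main limit is then computed by Laplace's method in the $y$-variable at each fixed $x_1'$, which collapses the integrand to its value on $\gamma$ and produces
\[
\int_0^L e^{-2\lambda t}\, e^{\Phi_1+\bar\Phi_2}\,\phi(\gamma(t))\,dt
\]
once $f,g$ are normalised to absorb the Gaussian-integral Jacobian $(\det\im H(t))^{-1/2}$. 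The gradient limits follow from the observation that the dominant term of $dv_s$ is $is\,e^{is\Theta}(d\Theta)\,a$, whose pairing with $\alpha$ on $\gamma$ is $is\,\alpha(\dot\gamma)\, a$; dividing by $\tau$ and using $s/\tau\to 1$ gives the factor $i\alpha(\dot\gamma(t))$, with the opposite sign in the $d\bar\omega_s$ case coming from complex conjugation. The principal new obstacle is precisely that the leading transport equation for $a_0$ is no longer an ODE along $\gamma$ but a two-dimensional $\bar\partial$-equation in the $(x_1,t)$-strip; once this is handled as in Section \ref{paramsoln}, the remaining steps are minor adaptations of \cite{CTA}.
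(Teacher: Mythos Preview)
Your overall strategy matches the paper's proof closely: the ansatz, the Fermi-coordinate phase with the Riccati solution, the amplitude expansion, the splitting $a_{00}=e^{\Phi_1+f_1}$ with $\Phi_1$ solving the $\bar\partial$-equation and $f_1$ absorbing $\Tr H$, and the Laplace-type evaluation of the limits are all as in the paper. However, there is one genuine gap in your treatment of self-intersections.

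You write that at a self-intersection ``the $e^{-\tau\langle\im H(t)y,y\rangle}$ concentration ensures that these passes are independent modulo $O(\tau^{-\infty})$ errors.'' This is not correct. At a point $p_j=\gamma(t_l)=\gamma(t_{l'})$ with $l\neq l'$, both Gaussian tubes are concentrated \emph{at the same point} of $M_0$; the Gaussian weights $e^{-\tau\im\Theta^{(l)}}$ and $e^{-\tau\im\Theta^{(l')}}$ are each of size $1$ there, so their product is not small. What kills the cross term $\int v_s^{(l)}\bar\omega_s^{(l')}\phi\,dV_g$ is the \emph{oscillatory} factor $e^{i\tau\varphi}$ with $\varphi=\re(\Theta^{(l)}-\Theta^{(l')})$. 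Since $\partial_t\Theta(t,0)=1$ in each Fermi chart and the geodesic meets itself transversally at $p_j$, one has $d\varphi(p_j)\neq 0$, and a single integration by parts with $L=|d\varphi|^{-2}\langle d\varphi,d\cdot\rangle$ (plus the Gaussian estimate $\||d\im\Theta^{(l)}|\,v_s^{(l)}\|_{L^2}=O(\tau^{-1/2})$) shows the cross term is $o(1)$, not $O(\tau^{-\infty})$. The paper carries this out explicitly; without it the concentration limit does not follow. A smaller point in the same vein: the higher transverse Taylor coefficients $a_{0j}$ (and likewise for $a_{-1},\dots$) are not obtained by Taylor-expanding the value on $\gamma$ but by solving further inhomogeneous $\bar\partial$-systems in $(x_1,t)$, for which you again need the matrix Cauchy-operator argument of Section~\ref{paramsoln}; your sketch should make clear that this, too, is a $\bar\partial$-step and not an ODE.
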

\begin{proof}
Firstly, let us isometrically embed our manifold $(M_0, g)$ into a larger closed manifold $(\widehat{M}, g)$ of the same dimension. This is possible since we can form the manifold $\widehat{M} = M_0 \sqcup_{\partial M_0} M_0$, which is the disjoint union of two copies of $M_0$, glued along the boundary; $g$, $X$ and $Y$ are smoothly extended to $\mathbb{R} \times \widehat{M}$. We will extend the geodesic such that for $\epsilon > 0$ we have $\gamma(t) \in \hat{M} \setminus M_0$ for $t \in (-2\epsilon, 0) \cup (L, L + 2\epsilon)$; this is possible since $\gamma$ is non-tangential. Let $N_0$ be a large positive integer such that $(-N_0, N_0) \times M_0$ contains $M$ and the support of $X$ and $Y$; let us introduce the notation for the interval $J_1:= [-N_0-1, N_0+1]$.

Let us first introduce a set of local coordinates along the geodesic; a detailed account of this can be found in \cite{CTA}. Since our manifold is compact and $\gamma$ has no loops, we can assume $\gamma$ self-intersects only finitely many times, at $0 < t_1 < \dotso < t_{N'} < L$ and that there is an open cover $\{(U^{(j)}, \varphi_j)\}_{j = 0}^{N' + 1}$ of $\gamma([-\epsilon, L + \epsilon])$ such that $\varphi_j(U^{(j)}) = I^{(j)} \times B$, where $I^{(j)}$s are open intervals and $B$ a small $n-2$-dimensional ball. Also, $\varphi_j(\gamma(t)) = (t, 0)$ and $t_j$s belong only to $I^{(j)}$s and $\bar{I}^{(j)} \cap \bar{I}^{(k)} = \emptyset$ unless $|j - k| \leq 1$; $\varphi_i$s agree on overlaps. These are called the \emph{Fermi coordinates} and they have the following two properties along the geodesic: the metric is diagonal and $\partial_i g^{jk} = 0$ (and so the Christoffel symbols vanish). Also, let us denote by $F$ the map from $U = [-2\epsilon, L + 2\epsilon] \times B$ to $\widehat{M}$, which restricts to the inverse charts on $I^{(i)} \times B$s; this is well defined since the charts agree on overlaps. The map $F$ is locally a diffeomorphism, but is not globally because of self-intersections of the geodesic (see Figure \ref{fig:myfigure}).

\begin{figure}
\centering
  \resizebox{\textwidth}{!}{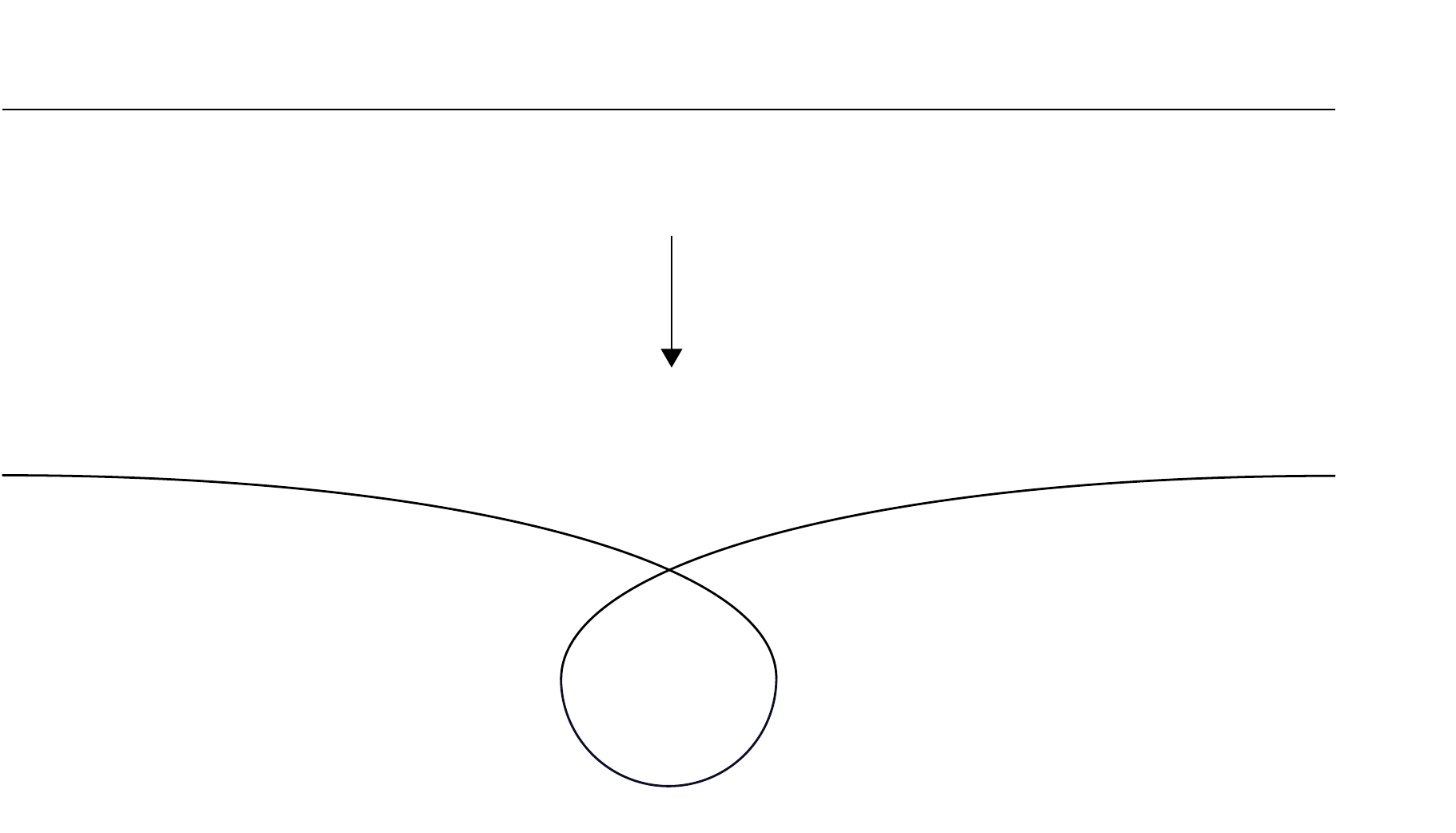}
    \caption{An illustration of the local diffeomorphism $F$ obtained from Fermi coordinates: the cover of the geodesic $\gamma$ is given by charts $(U^{(1)}, \varphi_1)$ and $(U^{(2)}, \varphi_2)$, with $\varphi_i(U^{(i)}) = I^{(i)} \times B$ for $i = 1, 2$. Red colour delimits the $U^{(1)}$ piece, the green one delimits $U^{(2)}$ and $\gamma(t_1) = \gamma(t_2) = p_1$.} \label{fig:myfigure}

 \end{figure}

Rather than constructing the quasimode locally, near a point $p_0 = \gamma(t_0)$ on $\gamma([-\epsilon, L +  \epsilon])$, observe that we may use the map $F$ as a local diffeomorphism and pull back all the data ($g$, $X$ and $Y$) to $\mathbb{R} \times U$ -- let us still denote the pullbacks with the same letters. Let us also use the notation $D_i: = J_i \times U$ for $i = 0$ and $1$. We will use the coordinate $y$ on $B$ and denote the geodesic in these local coordinates as $\Gamma = \{(t, 0)\}$ in $U$. Furthermore, we will construct the quasimode on $D$ and then provide a method to pushforward this quasimode to $J_0 \times M_0$.

Let us seek for solutions of the form $v_s = e^{is\Theta} a$, where $a$ and $\Theta$ will be complex functions supported in $|y| < \delta'/2$. Then we have:
\begin{multline*}
e^{sx_1}(\Delta + X + q)e^{-sx_1} v_s = e^{is\Theta} e^{-s(-x_1 + i\Theta)} (\Delta + X + q)e^{s(-x_1 + i\Theta)}a  \\
= e^{is \Theta} \Big\{(\Delta + X + q)a + s\Big(2\frac{\partial a}{\partial x_1} - 2i\langle{d\Theta, da}\rangle + \big(-X_1 + iX(\Theta)\big)a + i\Delta (\Theta) a\Big) - s^2 (1 - |d\Theta|^2)a \Big\}
\end{multline*}
by putting $\rho = -x_1 + i\Theta$ and using Lemma \ref{conjugate}. Firstly, let us solve $|d\Theta|^2 = 1$ up to order $|y|^N$ on $\Gamma$. We look for $\Theta$ in the form $\Theta = \sum_{j = 0}^{N} \Theta_j$, where:
\begin{align*} 
\Theta_j = \sum_{|\alpha| = j} \frac{\Theta_{j, \alpha}(t)}{\alpha !} y^{\alpha}
\end{align*}
are the homogeneous components and we write $g^{jk} = \sum_{l = 0}^{N} g^{jk}_l + r_{N+1}^{jk}$, where 
\begin{align*}
g_l^{jk} (t, y) = \sum_{|\beta| = l} \frac{g_{l, \beta}^{jk} (t)}{\beta !} y^{\beta} \quad \text{and} \quad r_{N+1}^{jk} = O(|y|^{N + 1})
\end{align*} 
is the remainder in Taylor's theorem. By the properties of the coordinates, we have $g^{jk}_0 = \delta^{jk}$ and $g_{1}^{jk}=0$. Let us accordingly choose $\Theta_0(t, y) = t$ and $\Theta_1(t, y) = 0$. Most of the next step follows from the lines of \cite{CTA}, but we give it here for completeness:
\begin{multline*}
\langle{d\Theta, d\Theta}\rangle - 1 = g^{jk}\partial_j \Theta \partial_k \Theta - 1 = (1 + g_2^{11} + \dotso)(1 + \partial_t \Theta_2 + \dotso)(1 + \partial_t \Theta_2 + \dotso)\\
+ 2(g_2^{1\alpha} + \dotso)(1 + \partial_t \Theta_2 + \dotso)(\partial_{y_{\alpha}} \Theta_2 + \dotso)\\
+ (\delta^{\alpha \beta} + g_2^{\alpha \beta} + \dotso)(\partial_{y_{\alpha}} \Theta_2 + \partial_{y_{\alpha}}\Theta_3 + \dotso)(\partial_{y_{\beta}} \Theta_2 + \partial_{y_{\beta}}\Theta_3 + \dotso) - 1\\
= [2\partial_t \Theta_2 + \nabla_y \Theta_2 \cdot \nabla_y \Theta_2 + g_2^{11}] + \sum_{p = 2}^N (\dotso) + O(|y|^{N+1})
\end{multline*}
We want to choose $\Theta_i$ such that the first bracket and the sum above vanish. We pick $\Theta_2(t, y) = \frac{1}{2} H(t)y \cdot y$ where $H(t)$ is a smooth complex symmetric matrix. For the first bracket to vanish, we need to have:
\begin{align*}
\dot{H}(t) + H(t)^2 = F(t)
\end{align*}
where $F(t)$ is the symmetric matrix determined by $g_2^{11}(t, y) = - F(t) y\cdot y$. Choosing $H_0 = H(t_0)$ for $t_0 := -2\epsilon$ to be any complex symmetric matrix with $\im(H)$ positive definite; following \cite{CTA} this Riccati equation has a unique smooth complex symmetric solution $H(t)$ with $\im(H(t))$ positive definite for all $t \in [-2\epsilon, L+2\epsilon]$. Now we find $\Theta_3, \dotso, \Theta_N$ by inductively solving the first order ODEs along $\Gamma$ with an initial condition at $t_0$, obtained by collecting the homogeneous terms in $y$ of higher order in the previous expansion. We obtain a smooth $\Theta$ such that $|d\Theta|^2 = 1$ up to order $|y|^N$.

Now we turn to the more interesting step, how to solve:
\begin{align*}
s\Big[2\frac{\partial a}{\partial x_1} - 2i \langle{d\Theta, da}\rangle + (-X_1 + iX(\Theta))a + i\Delta(\Theta) a\Big] + \Big(\Delta + X\Big)a = 0 
\end{align*}
up to order $|y|^{N}$. We look for $a$ in the form
\begin{align*}
a = \tau^{\frac{n - 2}{4}}(a_0 + s^{-1} a_{-1} + \dotsb + s^{-N}a_{-N})\chi(\frac{y}{\delta'})
\end{align*}
where $\chi$ is a bump function defined such that $\chi = 1$ on $|y| \leq 1/4$, $\chi = 0$ for $|y| \geq 1/2$. We now equate each degree of $s$ in the above expression to zero and obtain $N + 1$ equations for each degree $1, 0, \dotsc, -(N - 1)$:
\begin{gather*}
2\frac{\partial a_0}{\partial x_1} - 2i \langle{d\Theta, da_0}\rangle + \Big(-X_1 + iX(\Theta) + i\Delta(\Theta)\Big)a_0 = 0\\
2\frac{\partial a_j}{\partial x_1} - 2i \langle{d\Theta, da_j}\rangle + \Big(-X_1 + iX(\Theta) + i\Delta(\Theta)\Big)a_j + (\Delta + X)a_{j+1}= 0
\end{gather*}
for each $j = -1, \dotsc, -N$. Let us introduce $\eta = i\Delta\Theta - X_1 + iX(\Theta)$ and write $\eta = \eta_0 + \dotso + \eta_N + O(|y|^{N+1})$ for the Taylor expansion of $\eta$. We look for $a_0 = a_{00} + a_{01} + \dotso + a_{0N}$ where each $a_{0i}$ is a homogeneous polynomial of degree $i$. Then the degree one equation becomes:
\begin{align*}
2\frac{\partial}{\partial x_1}(a_{00} + \dotso + a_{0N}) - 2i g^{jk}\partial_j\Theta \partial_k a + (\eta_0 + \eta_1 + \dotso)(a_{00} + \dotso + a_{0N}) = 0
\end{align*}
to order $|y|^{N}$. After rewriting, this becomes:
\begin{multline*}
= 2\frac{\partial}{\partial x_1}(a_{00} + \dotso a_{0N}) - 2i (1 + g_2^{11} + \dotso )(1 + \partial_t \Theta_2 + \dotso)(\partial_t a_{00} + \partial_t a_{01} + \dotso) \\
- 2i(g_2^{1\beta} + \dotso)(1 + \partial_t \Theta_2 + \dotso)(\partial_{y_{\beta}} a_{01} + \dotso)\\
- 2i(g_2^{\alpha 1} + \dotso)(\partial_{y_{\alpha}} \Theta_2 + \dotso)(\partial_t a_{00} + \partial_t a_{01} + \dotso)\\
- 2i(\delta^{\alpha \beta} + g_2^{\alpha \beta} + \dotso)(\partial_{y_{\alpha}} \Theta_2 + \dotso)(\partial_{y_{\beta}} a_{01} + \dotso)\\
+ (\eta_{0} + \eta_1 + \dotso + \eta_{N} + O(|y|^{N + 1}))(a_{00} + a_{01} + \dotso + a_{0N})\\
= \Big[2\frac{\partial a_{00}}{\partial x_1} - 2i \partial_t a_{00} + \eta_0 a_{00}\Big] + \Big[2\frac{\partial a_{01}}{\partial x_1} - 2i \partial_t a_{01} - 2i \partial_{y_{\alpha}} \Theta_2 \partial_{y_{\alpha}} a_{01} + \eta_1 a_{00} + \eta_0 a_{01}\Big] + \dotso
\end{multline*}
where we have written down the first two terms in the $y$ expansion. For us, the equation for $a_{00}$ is particularly important (it will give us the value of $a_0$ along $\Gamma$). We have that $\eta_0 = (i\Delta \Theta - X_1 + iX(\Theta))(t, 0)$, where we know that $\Theta = t + 1/2 H(t) y\cdot y + O(|y|^3)$. Therefore, we compute:
\begin{align*}
\Delta \Theta (t, 0)= - |g|^{-\frac{1}{2}}\frac{\partial}{\partial x^j}(|g|^{\frac{1}{2}} g^{jk} \frac{\partial \Theta }{\partial x^k}) = -|g|^{-\frac{1}{2}} \frac{\partial |g|^{\frac{1}{2}}}{\partial t} - \delta^{jk}H_{jk} = -\mathrm{tr}H(t)
\end{align*}
So, our equation for $a_{00}$ becomes:
\begin{align}\label{a00}
\Big(\frac{\partial}{\partial x_1} - i\frac{\partial}{\partial t}\Big)a_{00} = \frac{1}{2}\Big(X_1 - iX_t + i\mathrm{tr}H(t)\Big)a_{00}
\end{align}
which we have seen in a more general, matrix case. Here, we want a solution of the form $a_{00} = e^{\Phi_1 + f_1}$, so that we obtain, for $\partial = 1/2(\partial / \partial x_1 - i\partial / \partial t)$
\begin{align}\label{fijna}
\partial \Phi_1 = \frac{1}{4}\Big(X_1 - iX_t\Big) \quad \text{and} \quad \frac{\partial f_1}{\partial t} = -\frac{1}{2}\Tr H(t)
\end{align}
where $\Phi_1$ is a function in both $x_1$ and $t$, $f_1$ is a function of just $t$. Now for the rest of the $a_{0i}$ for $i > 0$, we obtain a similar vector valued equation of the form:
\begin{align*}
\partial v + A v + f = 0
\end{align*}
where $v$ and $f$ are vectors and $A$ is a matrix. The reason for this is that for $i > 0$, we get more components in the Taylor expansion, so we get a coefficient for each (think of $a_{0i}$s as vectors). This is solvable by our previous work on fundamental solutions of such equations, so that we produce an invertible matrix $C$ such that
\begin{equation}
\partial C = -AC
\end{equation}
in $\mathbb{R} \times (-2\epsilon, L + 2\epsilon)$ (see Section \ref{paramsoln}). Then we try $v = Cu$ for some vector function $u$ and we get the equation: $\partial u = - C^{-1}f$, which we know how to solve in the bounded domain $J_0 \times [-\frac{3}{2}\epsilon, L + \frac{3}{2}\epsilon]$, by e.g. multiplying  $f$ with a cut-off function, equal to one on $J_0 \times [-\frac{3}{2}\epsilon, L + \frac{3}{2}\epsilon]$ and supported in $J_1 \times (-2\epsilon, L + 2\epsilon)$ in order to extend it to the whole $(x_1, t)$-plane and use the generalised Cauchy integral formula. Hence we determine $a_{0}$ and proceed to determine $a_i$s for $i > 0$ inductively. Notice also that $X$ is compactly supported, so we may indeed take the zero extension of it to the $(x_1, t)$-plane and solve the first equation in \eqref{fijna}.

At this point we make a remark about constructing the $\omega_s$ solution, which is the solution where we use $e^{sx_1}$ exponent in the CGO solution (and hence the $-s$ in the formulation of the theorem). The point is that everything just gets a minus sign at each spot where we use $x_1$. Checking through the details, we obtain a version of the equation \eqref{fijna} (we use the fact that $Y$ is skew-Hermitian):
\begin{align*}
\partial \bar{\Phi}_2 = \frac{1}{4}\Big(-Y_1 + iY_t\Big) \quad \text{and} \quad \frac{\partial f_2}{\partial t} = -\frac{1}{2}\Tr H(t)
\end{align*}
We are left with the terms of the form:
\begin{multline*}
e^{sx_1}(\Delta + X)e^{-sx_1} e^{is\Theta} a = e^{is\Theta} \tau^{\frac{n-2}{4}}\Big[s^2h_2a + sh_1 + \dotsb + s^{-(N - 1)}h^{-(N - 1)} \\
+ s^{-N}(\Delta + X)a_{-N}\Big]\chi(\frac{y}{\delta'}) + e^{is\Theta}\tau^{\frac{n-2}{4}}sb \tilde{\chi}(\frac{y}{\delta'})
\end{multline*}
where we have $h_j$s to be equal to zero to order $|y|^N$ on $\Gamma$; we also introduce $b$ and $\tilde{\chi}$ to describe the leftover terms which appear upon differentiating the function $\chi$ in a sum, but which therefore are zero near and far away of $\Gamma$. Concretely, we have $b = 0$ for $|y| \leq \delta'/4$ and $\tilde{\chi} = 0$ for $|y| \geq 1/2$ and the most important fact about this term is that it is linear in $s$.

In order to determine some bounds on $v_s$, let us introduce a positive constant $c$, for which it holds that $\im{H(t)} y \cdot y \geq c|y|^2$. Then we have:
\begin{align}
|e^{i s \Theta}| &= e^{-\lambda \re{\Theta}} e^{-\tau \im{\Theta}} = e^{-\lambda t} e^{-\lambda O(|y|^2)} e^{-\frac{\tau}{2} \im{H(t)} y \cdot y} e^{-\tau O(|y|^3)}\\
|v_s(x_1, t, y)| &\lesssim \tau^{\frac{n-2}{4}} e^{-\frac{1}{4} c\tau |y|^2} \chi(\frac{y}{\delta'})\label{esimatev_s}
\end{align}
after decreasing $\delta'$ if necessary and using the $1/4$ factor in the exponential to dominate the remaining $O(|y|^3)$ factor, for $x_1 \in J_0$. Thus we have:
\begin{equation*}
\lVert{v_s}\rVert_{L^2(J_0 \times U)} \lesssim \lVert{\tau^{\frac{n-2}{4}} e^{-\frac{1}{4}c\tau |y|^2} }\rVert_{L^2(J_0 \times U)} = O(1)
\end{equation*}
\begin{multline}\label{tauKproof}
\Big\lVert{e^{sx_1}(\Delta + X)e^{-sx_1} v_s}\Big\rVert_{L^2(J_0 \times U)} \lesssim \Big\lVert{\tau^{\frac{n-2}{4}} e^{-\frac{1}{4}c\tau |y|^2}\Big(\tau^2|y|^{N + 1} + \tau^{-N} + \tau b \tilde{\chi}\Big)}\Big\rVert_{L^2(J_0 \times U)}\\ = O(|\tau|^{\frac{3 - N}{2}})
\end{multline}
where the second line is equal to $O(|\tau|^{-K})$ upon setting $N = 2K + 3$, for any fixed $K$, a positive integer. 

Let us now record a boundary estimate for future purposes. Namely, since the geodesic intersects the boundary $\partial M_0$ transversely at $t = 0$ and $t = L$, we can introduce the implicit coordinates $\{(t(y), y) : |y| < \epsilon'\}$ for some smooth function $t(y)$ and small $\epsilon' > 0$. Then for $\delta'$ small enough:
\begin{gather*}
\lVert{v_s(x_1, \cdot)}\rVert^2_{L^2(\partial M_0 \cap U)} = \int_{|y| < \epsilon'} |v_s(x_1, t(y), y)|^2 dS(y) \lesssim \int_{\mathbb{R}^{n - 2}} \tau^{\frac{n-2}{2}} e^{-\frac{1}{2}c\tau |y|^2} dy = O(1)
\end{gather*}
for $x_1$ in $J_0$ and as $|\tau| \to \infty$.

Now we are done with the local construction and bounds on $J_0 \times [-\epsilon, L + \epsilon] \times B$ and want to glue the solutions together with desired concentration properties. Let us denote by $v_s^{(j)}$ the pushforward by the coordinate map $Id \times \varphi_j^{-1}$ of the so obtained solution on $J_0 \times U^{(j)}$ (where $Id: \mathbb{R} \to \mathbb{R}$ is the identity map). We thus obtain $v_s^{(0)}, v_s^{(1)}, \dotsc, v_s^{(r)}$. To glue these, let $\chi_j(t)$ be a partition of unity subordinate to $I^{(j)}$; the we extend these to $U^{(j)}$ by saying $\tilde{\chi}_j(x_1, t, y) = \chi_j(t)$ and finally let:
\begin{align}\label{methodpou}
v_s := \sum_{j = 0}^{r} \tilde{\chi}_j v_s^{(j)}
\end{align}
The previous remark allows us to have $v_s^{(j)} = v_s^{(j+1)}$ in the overlaps $J_0 \times \big(U^{(j)} \cap U^{(j+1)}\big)$. Now, pick small neighbourhoods of the geodesic self-intersection points $p_1, \dotsc, p_R$ and call them $V_1, \dotsc, V_R$; for $\delta'$ sufficiently small, we get that $F$ is injective on the complement of the inverse image by $F$ of the $V_i$s (see Figure \ref{fig:myfigure}). Therefore, we can pick a finite cover by $W_1, \dotsc, W_S$ of the remaining points on the geodesic such that $W_i \subset U^{(l_i)}$ for some $l_i$ and $\text{supp}(v_s) \subset \big(\cup V_i \big) \cup \big(\cup W_j \big)$ and moreover, the restrictions to these satisfy:
\begin{align}\label{pou_argument}
v_s|_{V_i} = \sum_{\gamma(t_l) = p_i} v_s^{(l)} \quad \text{and} \quad v_s|_{W_i} = v_s^{(l_i)}
\end{align}
It is now clear that the wanted $L^2$ bounds on $v_s$ follow from our previous local considerations on each of $v_s^{(i)}$. We are left with the concentration results to prove -- by considering the partitions of unity subordinate to $V_i$s and $W_j$s, we can assume that $\phi$ has compact support in one of these sets. Let us first consider the easier case where $\text{supp}( \phi) \subset W_k$ for some $k$. By a completely analogous  construction, we may assume that we have $\omega_s = e^{is\Theta} b$ on $J_0 \times [-\epsilon, L+ \epsilon] \times B$, constructed with respect to $Y$ -- notice that $\Theta$ is solved for independently of the vector fields $X$ and $Y$ (recall that we only want $|d\Theta|^2 = 1$ up to order $|y|^N$).

In $W_k$, we have $v_s = e^{is\Theta} a$ and $\omega_s = e^{is \Theta} b$, where we dropped the indices to simplify notation. Then we have:
\begin{multline*}
\int_{\{x'_1\} \times M_0} v_s \bar{\omega}_s \phi dV_g =  \int e^{i s \Theta} e^{-i \bar{s} \bar{\Theta}} a \bar{b} \phi dV_g \\
= \int_0^L \int_{\mathbb{R}^{n-2}} e^{-2 \lambda \re \Theta} e^{-2 \tau \im \Theta} \tau^{\frac{n-2}{2}} (a_0 + O(\tau^{-1}))(\bar{b}_0 + O(\tau^{-1})) \chi(y/\delta')^2 \phi |g|^{1/2} dy dt \\
= \int_0^L \int_{\mathbb{R}^{n-2}} e^{-\im H(t) y \cdot y} e^{-2 \tau^{-1/2}O(|x|^3)} e^{-2\tau^{-1} O(|x|^2)} e^{-2\lambda t}\tau^{\frac{n-2}{2}} \\
\cdot \big(a_0(t, \tau^{-1/2} x) + O(\tau^{-1})\big)\big(\bar{b}_0 (t, \tau^{-1/2} x)+ O(\tau^{-1})\big) \chi\big(x/(\tau^{1/2}\delta')\big)^2 |g|^{1/2}\big(t, \tau^{-1/2} x\big) \phi dy dt 
\end{multline*}
by performing the substitution $y = \tau^{-1/2} x$; we can see what the limit is -- namely, by bounding
\[e^{-c|x|^2} e^{2A|x|^3/(\tau^{1/2})} e^{2B|x|^2/\tau} \leq e^{|x|^2(-c + 2A\delta' + 2B/\tau)}\] where $c$ is as before the positive constant such that $\im H(t) y \cdot y \geq c |y|^2$ and using the fact that we integrate over $|y| \leq \tau^{1/2} \delta'$, by taking sufficiently small $\delta'$ we get exponent negative and hence we get an integrable function; thus we may use the Dominated convergence theorem to get this tends to, as $\tau \to \infty$:
\begin{multline}\label{det}
\int_0^L e^{-2\lambda t} e^{\Phi_1+ f_1 + \bar{\Phi}_2 + \bar{f}_2} \phi(\gamma(t)) \int_{\mathbb{R}^{n-2}} e^{-\im H(t) x \cdot x} dx dt \\
= \int_{\mathbb{R}^{n-2}} e^{-|y|^2} dy \int_0^L \frac{e^{-2\lambda t} e^{\Phi_1 + f_1 + \bar{\Phi}_2 + \bar{f}_2} \phi(\gamma(t))}{\sqrt{\text{det} \im H(t)}} dt
\end{multline}
by using the linear change of variable by the matrix $\im H(t)$. However, from before we know that:
\begin{align*}
\text{det}(\im H(t)) = \text{det}(\im H(t_0)) e^{-2 \int_{t_0}^t \Tr \re H(s) ds} \quad \text{and} \quad \frac{\partial(f_1 + \bar{f}_2)}{\partial t} = -  \Tr \re H(t)
\end{align*}
Hence we obtain cancellation in the above integral and by picking the initial condition for $H(t_0)$ such that $\det(\im H(t_0)) = \pi^{n-2}$, we finally get the desired limit:
\begin{align*}
\int_0^L e^{-2\lambda t} e^{\Phi_1 + \bar{\Phi}_2} \phi(\gamma(t)) dt
\end{align*}
Moreover, in the case where we have $\text{supp}(\phi) \subset V_j$ for some $j$, we have $v_s = \sum_{\gamma(t_l) = p_j} v_s^{(l)}$ and $\omega_s = \sum_{\gamma(t_l) = p_j} \omega_s^{(l)}$, which means that we have the following expression:
\begin{gather*}
v_s \bar{\omega}_s = \sum_{\gamma(t_l) = p_j} v_s^{(l)} \bar{\omega}_s^{(l)} + \sum_{l \neq l', \gamma(t_l) = \gamma(t_{l'}) = p_j} v_s^{(l)}\bar{\omega}_s^{(l')}
\end{gather*}
We want to show that the mixed terms vanish; i.e. want to show $\int_{V_j \cap M_0} v_s^{(l)} \bar{\omega}_s^{(l')} \phi dV_g \to 0$ as $\tau \to \infty$ for $l \neq l'$, so that we are left with the expression from the statement -- this would prove our claim.

Let us use the fact that $\frac{\partial \Theta}{\partial t} (t, 0) = 1$; write $v_s^{(l)} = e^{is\Theta^{(l)}}a^{(l)}$ and $\omega_s^{(l)} = e^{is\Theta^{(l)}}b^{(l)}$. This gives us that for $\varphi = \re (\Theta^{(l)} - \Theta^{(l')})$ we have $d\varphi \neq 0$ at the point $p_j$, as the geodesic intersects itself transversally. Therefore, by further reducing $\delta'$ if necessary, we may assume that $d\varphi$ is non-vanishing in $V_j$. From now on, we drop the subscript $s$ to relax the notation.

 Let $p^{(l)} = e^{-s \im \Theta^{(l)}} e^{-\lambda \re \Theta^{(l)}} a^{(l)}$ and analogously $q^{(l)} = e^{-s \im \Theta^{(l)}} e^{-\lambda \re \Theta^{(l)}} b^{(l)}$. Then we can write $v^{(l)} = e^{i \tau \re (\Theta^{(l)})} p^{(l)}$ and similarly $\omega^{(l)} = e^{i \tau \re (\Theta^{(l)})} q^{(l)}$. Then one can easily check that:
\begin{align*}
\int_{V_j \cap M_0} v^{(l)}\bar{\omega}^{(l')} \phi dV_g = \int_{V_j \cap M_0} e^{i \tau \varphi} p^{(l)} \bar{q}^{(l')} \phi dV_g
\end{align*}
Fix $\epsilon'' > 0$. In order to be able to do calculus with $\phi$, we split it into a smooth and a sufficiently small part: let $\phi = \phi_1 + \phi_2$, where $\phi_1 \in C_c^{\infty}(V_j \cap M_0)$ smooth and $\lVert{\phi_2}\rVert_{L^{\infty}(V_j \cap M_0)} \leq \epsilon''$. For the $\phi_2$ part, we have the bound $\big|\int_{V_j \cap M_0} e^{i \tau \varphi} p^{(l)} \bar{q}^{(l')} \phi_2 dV_g \big| \lesssim \lVert{p^{(l)}}\rVert_{L^2}\lVert{\bar{q}^{(l')}}\rVert_{L^2}\lVert{\phi_2}\rVert_{L^\infty} \lesssim \epsilon''$, since $\lVert{p^{(l)}}\rVert_{L^2} \lesssim \lVert{v^{(l)}}\rVert_{L^2} = O(1)$ and similarly for $q^{(l')}$.

For the main, smooth part we perform integration by parts with the operator $Lf = \langle|d\varphi|^{-2} d\varphi, $\\$df\rangle$, by noting that $\frac{1}{i\tau} L(e^{i\tau \varphi}) = e^{i \tau \varphi}$:

\begin{multline*}
\int_{V_j \cap M_0} e^{i \tau \varphi} p^{(l)} \bar{q}^{(l')} \phi_1 dV_g = \int_{V_j \cap \partial M_0 } \frac{\partial_\nu \varphi}{i \tau |d \varphi |^2} e^{i \tau \varphi} p^{(l)} \bar{q}^{(l')} \phi_1 dS  \\
+ \frac{1}{i \tau} \int_{V_j \cap M_0} e^{i \tau \varphi} L^t (p^{(l)} \bar{q}^{(l')} \phi_1) dV_g
\end{multline*}

\noindent where $L^t$ is the transpose of the operator $L$. Now we have the job to estimate the two integrals on the right hand side; the proof of this is identical to the proof in \cite{CTA}. By using the fact that $\int \tau^{\frac{n-2}{2}} e^{-c\tau |y|^2} |y|^2 dy = O(\tau^{-1})$ and that in the local chart determined by $l$, $|d \im \Theta^{(l)}| \lesssim |y|$, we have:
\begin{gather*}
\lVert{|d \im \Theta^{(l)}| v^{(l)}}\rVert_{L^2} \lVert{\bar{\omega}^{(l')}}\rVert \lVert{\phi_1}\rVert_{L^{\infty}} \lesssim \tau^{-1/2}
\end{gather*}
But this is exactly the form of summand that contributes the most to the second integral; it is the one that is obtained upon acting of $L^t$ on $e^{-s \im \Theta^{(l)}}$, because after differentiation we get an extra $\tau$ term which happily cancels with $\frac{1}{i \tau}$; everything else is bounded.

The boundary integral is bounded by previous local bounds; hence the $\frac{1}{i \tau}$ factor takes care of it. Therefore, finally, by using the previous case on each of the factors $v^{(l)}_s \bar{\omega}^{(l)}_s$, we have that:
\begin{gather*}
\lim_{\tau \to \infty} \int_{\{x'_1\} \times M_0} v_s^{(l)} \bar{\omega}^{(l)} \phi dV_g = \int_{I^{(l)}} e^{-2 \lambda t} e^{\Phi_1 + \bar{\Phi}_2} \phi dt
\end{gather*}
So by adding these for time intervals $I^{(l)}$ for $\gamma(t_l) = p_j$, we get the desired result.

We are left with the final piece of the proof, which is concerned about the concentration properties of the solutions when coupled with a $1$-form. As before, by using a partition of unity, we may assume $\phi$ has compact support in some of the $W_k$ or $V_i$ (the part of $\phi$ which is zero near the geodesic, can be made to have disjoint support with $v_s$). 

Let us first consider the case $\text{supp}(\phi) \subset W_k$. Here we have $v_s|_{W_k} = v^{(l)} = e^{i s \Theta^{(l)}} a^{(l)}$ and $\omega_s|_{W_k} = \omega^{(l)} = e^{i s \Theta^{(l)}}b^{(l)}$ for some $l$. We want to compute the following limit, where we use the $x = (t, y)$ coordinates (we drop some of the indices):
\begin{multline*}
\frac{1}{\tau} \int_{\{x'_1\} \times M_0} g^{ij} \alpha_i \frac{\partial v_s}{\partial x^j} \bar{\omega}_s \phi dV_g = \frac{is}{\tau} \int_{\{x'_1\} \times M_0} g^{ij} \alpha_i \frac{\partial \Theta}{\partial x^j} v_s \bar{\omega}_s \phi dV_g  \\
+\frac{1}{\tau} \int_{\{x'_1\} \times M_0} g^{ij} \alpha_i e^{i s\Theta} \frac{\partial a}{\partial x^j} \bar{\omega}_s \phi dV_g \to \int_0^L i \alpha_t e^{\Phi_1 + \bar{\Phi}_2} e^{-2 \lambda t} \phi dt
\end{multline*}
as $\tau \to \infty$, where $\alpha_t = \alpha(\dot{\gamma}(t))$; this is because the first integral can be computed by our previous considerations and using the fact that $\Theta = t + 1/2 \im H(t) y \cdot y + O(|y|^3)$ to compute the derivatives along the geodesic. Furthermore, the second term goes to zero by this simple estimate:
\begin{gather}\label{assss}
\lVert{\omega_s}\rVert_{L^2} \int_0^L \int_{\mathbb{R}^{n-2}} |\alpha|^2 |e^{is\Theta}|^2 |da|^2 dy dt \lesssim \int_0^L \int_{\mathbb{R}^{n-2}} \tau^{\frac{n-2}{2}} e^{-\frac{1}{2} c\tau |y|^2} dy dt = O(1)
\end{gather}
which finishes the proof in this case.

For the more complicated case $\text{supp}(\phi) \subset V_k$, we have that $v_s = \sum_{\gamma(t_l) = p_k} v_s^{(l)}$ and $\omega_s = \sum_{\gamma(t_l) = p_k} \omega_s^{(l)}$. In the coordinates $x = (t, y)$ corresponding to $I^{(l)}$, for each $l$ and $l'$ with $\gamma(t_l) = \gamma(t_{l'}) = p_k$:
\begin{multline*}
\int_{\{x_1'\} \times M_0} g^{ij}\alpha_i \frac{\partial v^{(l)}}{\partial x^j} \bar{\omega}^{(l')} \phi dV_g = \frac{i s}{\tau}\int_{\{x_1'\} \times M_0} g^{ij} \alpha_i \frac{\partial \Theta}{\partial x^j}v^{(l)} \bar{\omega}^{(l')} \phi dV_g\\ + \frac{1}{\tau} \int_{\{x_1'\} \times M_0} g^{ij}\alpha_i e^{is \Theta} \frac{\partial a}{\partial x^j} \bar{\omega}^{(l')} \phi dV_g
\end{multline*}
where we write $v^{(l)} = e^{i s \Theta} a$. Now by the previous steps, we easily see that, if $l \neq l'$, the first term is zero in the limit and the second term goes to zero by the bound \eqref{assss} above. However, if we have $l = l'$, by the previous step we again have the right limit, which is $\int_{I^{(l)}} i\alpha_t e^{\Phi_1 + \bar{\Phi}_2} e^{-2 \lambda t} dt$. Combining the results, we obtain:
\begin{gather*}
\lim_{\tau \to \infty}\int_{\{x'_1\} \times M_0} \langle{dv_s, \alpha}\rangle \bar{\omega}_s \phi dV_g = \sum_{\gamma(t_l) = p_k} \int_{I^{(l)}} i \alpha_t e^{\Phi_1 + \bar{\Phi}_2}e^{-2 \lambda t} \phi dt = \int_0^L i \alpha_t e^{\Phi_1 + \bar{\Phi}_2} e^{-2 \lambda t} \phi dt
\end{gather*}
which finally finishes the proof. Similarly to this last part of the proof we can determine the limit where the integrand is $\langle{\alpha, d\bar{\omega}_s}\rangle v_s \phi$ -- we get the same limit with just a minus sign in front.
\end{proof}

\begin{rem}\rm \label{tauK} The equation \eqref{fijna} defining $\Phi_1$ is invariant under summing with an anti-holomorphic function. Therefore, in the previous theorem, we could have inserted an extra anti-holomorphic part $h$ in the integrand of the limit. Moreover, we can see from the proof (see \eqref{tauKproof} and the lines nearby) that we could have changed the estimate $\lVert{e^{sx_1}(\Delta + X + q)e^{-sx_1}v_s}\rVert_{L^2(M)} = o(\tau)$ as $|\tau| \to \infty$ with the stronger, $O(|\tau|^{-K})$ estimate, for any $K > 0$ -- this will get used in the partial boundary data setting.
\end{rem}

\begin{rem}\rm \label{dv_s}
Note that we also have $\lVert{dv_s}\rVert_{L^2(M; T^*M)} = O(|\tau|)$ (or equivalently $\lVert{v_s}\rVert_{H^1_{scl}(M)} = O(1)$ for $h = \frac{1}{\tau}$). This simply follows from the local estimate \eqref{esimatev_s} and the fact that $dv_s = is(d\Theta) e^{is\Theta} a + e^{is \Theta} da$ (locally), so in the end we just get an extra factor of $\tau$ in the $L^2(M)$ norm.
\end{rem}

\begin{rem}\rm
It is also of interest to mention that the above construction works for metrics on $\mathbb{R} \times M_0$ that are \textit{conformal} to the product metric (this is also considered in \cite{CTA}). However, for simplicity we have omitted this conformal factor from the statement of this theorem, but more importantly we can prove Theorem \ref{mainconstruction} without this fact. It is not essential at this point (it will be important later, when when we use the integral identity) that $X$ and $Y$ are skew-Hermitian, but the equation \eqref{fijne} is simpler with this assumption.
\end{rem}

We are also interested in a vector valued version of the previous theorem. The statement of this theorem is completely analogous for vectors (matrices), as well as the proof; however, we give a sketch of the proof at some points of difference ($E' = \mathbb{R} \times M_0 \times \mathbb{C}^{m\times m}$ is the vector bundle of matrices with the fibrewise Hermitian inner product $\langle{A, B}\rangle = \Tr(AB^*)$).

\begin{theorem}[Construction of the vector valued Gaussian Beams]
Let $\gamma: [0, L] \to M_0$ be a non-tangential geodesic and let $\lambda$ be a real parameter, with $M_0$ \textit{any compact manifold with boundary}. Let $X$ and $Y$ be two skew-Hermitian matrices of vector fields on $M$ and $q$ a matrix potential; we extend $X$, $Y$ and $q$ to have compact support in $\mathbb{R} \times M_0$. Let $N_0$ be a large positive integer and denote $J_0 = [-N_0, N_0]$. Then there exists a family of generalised quasimodes satisfying the above conditions, i.e. if $s = \tau + i \lambda$, then there exists $v_s, \omega_s \in C^{\infty}(J_0 \times M_0, E')$ such that:
\begin{align*}
\Big \lVert{\Big((\Delta_{g} + X + q) + s(2\frac{\partial}{\partial x_1} - X_1) - s^2\Big)v_s}\Big \rVert_{L^2(J_0 \times M_0; E')} = o(|\tau|) \quad \text{and} \quad \lVert{v_s}\rVert_{L^2(J_0 \times M_0; E')} = O(1)\\
\Big \lVert{\Big((\Delta_{g} + Y + q) - s(2\frac{\partial}{\partial x_1} - Y_1) - s^2\Big)\omega_s}\Big \rVert_{L^2(J_0 \times M_0; E')} = o(|\tau|) \quad \text{and} \quad \lVert{\omega_s}\rVert_{L^2(J_0 \times M_0; E')} = O(1)
\end{align*}
as $\tau \to \infty$ and for each $\phi \in C(M_0)$ and $x'_1 \in \mathbb{R}$ we have:
\begin{align*}
\lim_{\tau \to \infty} \int_{\{x'_1\} \times M_0} \Tr{(v_s \omega_s^*)} \phi dV_{g} =  \int_0^L e^{-2\lambda t} \Tr{(C_X C_Y^*)}\phi(\gamma(t)) dt
\end{align*}
where $C_X$ and $C_Y$ are smooth $m \times m$ matrices on $\mathbb{R} \times [0, L]$ which satisfy the following equations:
\begin{align}\label{transportt}
\left(\frac{\partial}{\partial x_1} - i \frac{\partial}{\partial r}\right) (C_X) = \frac{1}{2}(X_1 - iX_r)C_X \quad \text{and} \quad \left(\frac{\partial}{\partial x_1} - i \frac{\partial}{\partial r}\right) (C^*_Y) = \frac{1}{2}C^*_Y(-Y_1 + iY_r)
\end{align}
Moreover, the following limits holds for $v_s$ and $\omega_s$ and any one form $\alpha$ on $\mathbb{R} \times M_0$:
\begin{align*}
\lim_{\tau \to \infty} \frac{1}{\tau} \int_{\{x'_1\} \times M_0} \Tr{(\langle{\alpha, dv_s}\rangle\omega_s^*)} \phi dV_g &= \int_0^L i \alpha(\dot{\gamma}(t)) \Tr{(C_X C_Y^*)}e^{-2 \lambda t} \phi(\gamma(t)) dt\\
\lim_{\tau \to \infty} \frac{1}{\tau} \int_{\{x'_1\} \times M_0} \Tr{(\langle{\alpha, d\omega^*_s}\rangle v_s)} \phi dV_g &= -\int_0^L i \alpha(\dot{\gamma}(t)) \Tr{(C_X C_Y^*)}e^{-2 \lambda t} \phi(\gamma(t)) dt
\end{align*}
\end{theorem}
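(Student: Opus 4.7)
The plan is to follow the scalar proof of Theorem \ref{main} essentially verbatim, replacing scalar-valued amplitudes with matrix-valued ones and using trace cyclicity in the concentration computations. The phase function $\Theta$ is unchanged: it is constructed entirely from the metric via the Fermi coordinates $(t,y)$, and the eikonal $|d\Theta|^2 = 1$ up to order $|y|^N$ is solved as before by choosing $\Theta_0(t,y)=t$, $\Theta_1 = 0$, $\Theta_2 = \tfrac{1}{2} H(t) y\cdot y$ with $H(t)$ a complex symmetric matrix obeying the Riccati equation $\dot H + H^2 = F$ with $\operatorname{Im} H(t_0)$ positive definite (normalized so that $\det \operatorname{Im} H(t_0) = \pi^{n-2}$), and then $\Theta_3,\ldots,\Theta_N$ inductively. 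The exponentials $e^{is\Theta}$ and $e^{-i\bar s\bar\Theta}$ remain scalar and so commute with all matrix factors.

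For the amplitude I seek $A = \tau^{(n-2)/4}(A_0 + s^{-1}A_{-1} + \cdots + s^{-N}A_{-N})\chi(y/\delta')$ with $A_j \in C^\infty(J_1\times U;\mathbb{C}^{m\times m})$, plug into $e^{sx_1}(\Delta+X+q)e^{-sx_1}(e^{is\Theta}A)$, and collect powers of $s$ exactly as in the scalar case. The only new feature is that $X$ and $q$ are matrices acting on $A$ by left multiplication, and the leading transport equation for $A_{00} := A_0|_\Gamma$ becomes the matrix ODE
\begin{equation*}
\left(\tfrac{\partial}{\partial x_1} - i\tfrac{\partial}{\partial t}\right) A_{00} = \tfrac{1}{2}(X_1 - iX_t)A_{00} + \tfrac{i}{2}\operatorname{tr} H(t)\, A_{00}.
\end{equation*}
Writing $A_{00} = C_X\, e^{f_1}$ with $f_1$ the same scalar function $\partial_t f_1 = -\tfrac{1}{2}\operatorname{tr} H(t)$ as before reduces this to the $\bar\partial$-equation $(\partial_{x_1} - i\partial_t)C_X = \tfrac{1}{2}(X_1 - iX_t)C_X$, which is exactly \eqref{transport2}. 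Invertible matrix solutions smooth in parameters are produced by Theorem \ref{geointerpret} together with the parametric construction described in Subsection \ref{paramsoln}; this yields the matrix $C_X$ in the statement. For the higher homogeneous components $A_{0j}$ with $j\geq 1$, the same argument as in the scalar case reduces the transport equation to a linear inhomogeneous system $\partial v + \mathcal{A} v + f = 0$ where $\mathcal{A}$ now has matrix-valued coefficients acting on tuples of matrix-valued Taylor components, and a fundamental solution $C$ with $\partial C = -\mathcal{A}C$ again exists by the same parametric $\bar\partial$ theory. The lower-order amplitudes $A_{-1},\ldots,A_{-N}$ are obtained inductively in an identical fashion. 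The analogous construction for $\omega_s$ uses phase $-x_1$ and produces $B_{00} = e^{\bar f_2} C_Y^*$ satisfying the adjoint transport equation appearing in \eqref{transportt} (the right multiplication reflects the fact that $Y$ enters through the conjugated operator).

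To glue the local solutions across the cover of $\gamma([-\epsilon,L+\epsilon])$ I use the same partition-of-unity argument \eqref{methodpou}--\eqref{pou_argument}, which applies without change because the construction is chartwise. The pointwise Gaussian bound $|v_s|\lesssim \tau^{(n-2)/4} e^{-c\tau|y|^2/4} \|A\|$ still holds (now $\|\cdot\|$ a matrix norm) and immediately gives the $L^2$ estimates and the $o(|\tau|)$ (in fact $O(|\tau|^{-K})$, cf. Remark \ref{tauK}) control of $e^{sx_1}(\Delta+X+q)e^{-sx_1}v_s$ via \eqref{tauKproof}.

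For the concentration limits, the observation is that
\begin{equation*}
\operatorname{Tr}(v_s\omega_s^*) = e^{is\Theta - i\bar s\bar\Theta}\operatorname{Tr}(AB^*),
\end{equation*}
since the scalar exponentials commute with all matrix factors and trace is cyclic. Hence the Dominated Convergence argument leading to \eqref{det} goes through with $a_0\bar b_0$ replaced by $\operatorname{Tr}(A_{00}B_{00}^*) = e^{f_1+\bar f_2}\operatorname{Tr}(C_X C_Y^*)$; the cancellation $e^{f_1+\bar f_2}\cdot(\det\operatorname{Im} H(t))^{-1/2} = 1$ is scalar and so is unaffected by the matrix generalization. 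For the cross terms $v_s^{(l)}(\omega_s^{(l')})^*$ with $l\neq l'$ at a self-intersection point, the non-stationary phase argument is identical: the phase $\varphi = \operatorname{Re}(\Theta^{(l)} - \Theta^{(l')})$ has non-vanishing differential, and integrating by parts against the operator $L$ gives the same $O(\tau^{-1/2})$ bound when applied entrywise. For the one-form limits, I differentiate $v_s = e^{is\Theta}A$ to get $dv_s = is(d\Theta)e^{is\Theta}A + e^{is\Theta}dA$; the second piece contributes $O(1/\tau)$ by the estimate \eqref{assss}, and the first piece combined with trace cyclicity and the concentration along $\Gamma$ (which picks out $d\Theta|_\Gamma = dt$) gives the claimed $i\alpha(\dot\gamma)\operatorname{Tr}(C_X C_Y^*)$ integrand, with the opposite sign arising for $d\omega_s^*$ from $\bar s/\tau \to 1$ versus $-i\bar s/\tau \to -i$.

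The only genuinely new technical point over the scalar case is the need to solve matrix $\bar\partial$-equations with invertible smoothly parameter-dependent solutions, and this has already been dealt with in Subsection \ref{paramsoln}; everything else is a bookkeeping generalization via trace cyclicity.
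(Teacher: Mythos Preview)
Your proposal is correct and follows essentially the same route as the paper: the paper's proof is itself just a short remark that the scalar argument of Theorem \ref{main} goes through verbatim with matrix amplitudes, $X,Y$ acting by left multiplication, $a_{00}=C_X C_1$ (with $C_1$ the scalar $e^{f_1}$ times the identity), and $\Tr(v_s\omega_s^*)$ replacing $v_s\bar\omega_s$. One small slip: your amplitude for $\omega_s$ should be $B_{00}=C_Y e^{f_2}$ (with $Y$ acting on the \emph{left}, as it does for $v_s$), so that $B_{00}^*=e^{\bar f_2}C_Y^*$ and your trace computation $\Tr(A_{00}B_{00}^*)=e^{f_1+\bar f_2}\Tr(C_X C_Y^*)$ is then consistent --- the right-multiplication form in \eqref{transportt} arises only after taking the adjoint, not in the transport equation for $B_{00}$ itself.
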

\begin{proof}
Same as the proof of Theorem \ref{main}, with a few remarks. Firstly, every appearance of $v_s \bar{\omega}_s$ is replaced by the inner product $\Tr{(v_s \omega_s^*)}$ and we are looking for $v_s = e^{is \Theta} a$, where this time $a$ is a \textit{matrix}; so the action of $X$ and $Y$ is matrix multiplication from the left. However, formally, the computations stay the same until the appearance of $\Phi_{1,2}$; the $C_{X, Y}$ take their role, this time as matrices. Namely, when we arrive to the equation for $a_{00}$, which is \eqref{a00}:
\begin{align*}
\Big(\frac{\partial}{\partial x_1} - i\frac{\partial}{\partial t}\Big)a_{00} = \frac{1}{2}\Big(X_1 - iX_t + i\mathrm{tr}H(t)\Big)a_{00}
\end{align*}
we ask for matrices $C_X$ and $C_1$ such that $a_{00} = C_XC_1$, where:
\begin{align*}
\Big(\frac{\partial}{\partial x_1} - i\frac{\partial}{\partial t}\Big)C_X = \frac{1}{2}\Big(X_1 - iX_t\Big)C_X \quad \text{ and } \quad \frac{\partial C_1}{\partial t} = -\frac{1}{2} \Tr{(H(t))} C_1(t)
\end{align*}
so that $C_{1,2}$ play the role of $f_{1,2}$. One checks that such $a_{00}$ satisfies the conditions and for $C_1(t)$ we just take the diagonal matrix obtained by integration. This is later used to get the cancellation of $\sqrt{\det{\im H(t)}}$ with $C_1C_2^*$, which jumps out of the trace as before in the integral \eqref{det}. 

Later, when proving the mixed products vanish, the $p$'s and $q$'s introduced translate to matrices naturally and the estimates which follow stay the same. Finally, let us note that $C_X$ is invariant under multiplication on the right by an anti-holomorphic (conjugate holomorphic) matrix in the sense we could replace $C_X$ by $C_X H$ for such a matrix $H$.
\end{proof}

\begin{rem}[Everything works for admissible vector bundles] \rm\label{mainvector}
We can now easily extend the construction from the case of trivial vector bundles to the case of possibly topologically non-trivial admissible ones (see Definition \ref{vecadmissible}), equipped by a unitary connection. We restrict our attention just to operators $d_A^*d_A + Q$ induced by connections and potentials; to this end, assume the vector bundle $E = \pi^*E_0$ over $\mathbb{R} \times M_0$ is equipped with two unitary connections $A_1$ and $A_2$, where $E_0$ is a vector bundle over $M_0$.

Basically, what we need to do is to imitate the above vector proof with small alterations: to start with, let us recall the Fermi coordinates given by a map $F: J_0 \times U \to \mathbb{R} \times M_0$, where $U = [-\epsilon, \epsilon + L] \times B_\delta$ and $B_\delta$ is a small ball in dimension $(n-2)$ -- $F$ is a local diffeomorphism, giving us the tubular neighbourhood of the geodesic (see Figure \ref{fig:myfigure}). Therefore, we can pull-back the bundle $E$ to the trivial bundle $F^* E = U \times \mathbb{C}^m$ with the standard metric; we pull back the connections and the metric, as well. Furthermore, in this case we cannot work on $\text{End }E$ as we previously did in Section \ref{sec4.5}. This means we have to restrict to vector solutions and in particular our solutions to the transport equation that go into the Gaussian beams will be vectors. Then we may run the proof again; the only thing we need to replace are the resulting concentration properties:
\begin{align*}
\lim_{\tau \to \infty} \int_{\{x'_1\} \times M_0} \langle{v_s, \omega_s}\rangle_E \phi dV_{g} =  \int_0^L e^{-2\lambda t} \langle{C_1a_1, C_2a_2}\rangle_{\mathbb{C}^m}\phi(\gamma(t)) dt
\end{align*}
where $C_1$ and $C_2$ are constructed on $J_0 \times U$ for connections $A_1$ and $A_2$ as fundamental solutions to the $\bar{\partial}$-equation \eqref{transportt}, respectively; the $a_1$ is anti-holomorphic so that $C_1a_1$ solves the vector $\partial$-equation and $a_2$ is analogously holomorphic, so that $C_2a_2$ solves the $\bar{\partial}$-equation. Then we may in particular set $a_i$ to be constant and vary these constants to deduce various properties.

For the other identity we have to be slightly more careful; namely $dv_s$ is not well defined as for the trivial bundle. However, we may define it as $dv_s$ in our construction in $U$ and then push it forward by the same method of partition of unity and the map $F$ to the neighborhood of the geodesic (as in \eqref{methodpou}) and hence to the whole manifold as a $1$-form with values in $E$ (and with support in a neighbourhood of the geodesic). Then the identities become:
\begin{align*}
\lim_{\tau \to \infty} \frac{1}{\tau} \int_{\{x'_1\} \times M_0} \Big\langle{\langle{\alpha, dv_s}\rangle_{T^*M}, \omega_s\Big\rangle}_{E} \phi dV_g &= \int_0^L i \alpha(\dot{\gamma}(t)) \langle{C_1a_1, C_2a_2}\rangle_{\mathbb{C}^m} e^{-2 \lambda t} \phi(\gamma(t)) dt\\
\lim_{\tau \to \infty} \frac{1}{\tau} \int_{\{x'_1\} \times M_0} \Big\langle{\langle{\alpha, d\omega_s}\rangle_{T^*M}, v_s\Big\rangle}_{E} \phi dV_g &= \int_0^L i \alpha(\dot{\gamma}(t)) \langle{C_1a_1, C_2a_2}\rangle_{\mathbb{C}^m} e^{-2 \lambda t} \phi(\gamma(t)) dt
\end{align*}
\end{rem}

\subsection{Application of Gaussian Beams}\label{sec5.2}

We now give a concrete application of the construction of generalised quasimodes -- the construction of the CGO solutions. By using the Carleman estimates from Section \ref{sec3}, we can just put the ingredients together in a simple way. For this section, assume we are working in the setting of the CTA manifolds, that is $\tilde{g} = e \oplus g_0$ with $g = c \tilde{g}$ for a positive function $c$, where as usual we have $(M, g) \Subset (\mathbb{R} \times M_0, g)$ of the same dimension $n$.



\begin{prop}[CGO construction]\label{mainconstruction}
Let $E$ be an admissible Hermitian vector bundle, $A$ a unitary connection and $Q$ be a smooth section of the endomorphism bundle $\textnormal{End }E$. Let $s = \tau + i\lambda$, where $\tau$ and $\lambda$ are real numbers. Then there exists $\tau_0$, such that for $|\tau| \geq \tau_0$ large enough, there exists a smooth solution $u = e^{-sx_1}c^{-\frac{n-2}{4}}(v_s + r_s)$ to the equation $\Lapl_{g, A, Q} u = 0$, with the following conditions fulfilled:
\begin{align*}
\lVert{r_s}\rVert_{L^2(M; E)} = o(1), \quad \lVert{r_s}\rVert_{H^1(M; E)} = o(|\tau|) \quad \text{and} \quad \lVert{v_s}\rVert_{L^2(M; E)} = O(1)
\end{align*}
as $|\tau| \to \infty$ and the concentration properties for $v_s$ as in Theorem \ref{main}.
\end{prop}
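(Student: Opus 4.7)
The plan is to reduce $\mathcal{L}_{g,A,Q}u = 0$ on the CTA metric $g$ to an equation on the product metric $\tilde g = e \oplus g_0$ via a conformal rescaling, supply the principal part $v_s$ by the vector-valued Gaussian beam of Remark \ref{mainvector}, and produce the remainder $r_s$ by the solvability Carleman estimate of Theorem \ref{carleman}. Writing $g = c\tilde g$ and combining the conformal transformation law
\[ c^{\frac{n+2}{4}} \mathcal{L}_{g, A, Q} \bigl(c^{-\frac{n-2}{4}} w\bigr) = \mathcal{L}_{\tilde g, A, \tilde Q}\, w \]
(which follows from the scalar identity used in the proof of Theorem \ref{specslucaj}, applied componentwise via \eqref{notation}) with the ansatz $u = e^{-sx_1} c^{-\frac{n-2}{4}}(v_s + r_s)$, the original equation becomes
\[ e^{sx_1}\, \mathcal{L}_{\tilde g, A, \tilde Q}\, e^{-sx_1} (v_s + r_s) = 0 \]
for a suitably modified smooth endomorphism $\tilde Q$ absorbing the conformal correction.

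The imaginary part of $s = \tau + i\lambda$ is then absorbed into the connection. Since $-i\lambda\, dx_1 \otimes \mathrm{Id}_E$ is skew-Hermitian, the shifted connection $A' := A - i\lambda\, dx_1 \otimes \mathrm{Id}_E$ is again unitary, and a direct calculation (using $e^{i\lambda x_1} d_A e^{-i\lambda x_1} = d_{A'}$) yields
\[ e^{sx_1}\, \mathcal{L}_{\tilde g, A, \tilde Q}\, e^{-sx_1} = e^{\tau x_1}\, \mathcal{L}_{\tilde g, A', \tilde Q}\, e^{-\tau x_1}. \]
Applying Remark \ref{mainvector} to the operator $\mathcal{L}_{\tilde g, A', \tilde Q}$ along a non-tangential geodesic in $M_0$ produces a smooth $v_s \in C^\infty(J_0 \times M_0; E)$ (for $J_0$ containing the projection of $M$) satisfying $\|v_s\|_{L^2(M; E)} = O(1)$, the concentration properties required in the proposition, and the quasimode bound
\[ \bigl\| e^{\tau x_1}\, \mathcal{L}_{\tilde g, A', \tilde Q}\, e^{-\tau x_1} v_s \bigr\|_{L^2(M; E)} = o(|\tau|) \quad \text{as } |\tau| \to \infty. \]

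It remains to construct $r_s$ solving
\[ e^{\tau x_1}\, \mathcal{L}_{\tilde g, A', \tilde Q}\, e^{-\tau x_1} r_s = f, \qquad f := -\,e^{\tau x_1}\, \mathcal{L}_{\tilde g, A', \tilde Q}\, e^{-\tau x_1} v_s, \]
with $\|f\|_{L^2(M; E)} = o(|\tau|)$. Since $\varphi = x_1$ is a limiting Carleman weight for $\tilde g$ (using the weight $\pm x_1$ according to the sign of $\tau$), Theorem \ref{carleman} at the semiclassical scale $h = 1/|\tau|$ furnishes $r_s \in H^1(M; E)$ with
\[ \|r_s\|_{H^1_{scl}(M; E)} \le C h \|f\|_{L^2(M; E)} = C h \cdot o(h^{-1}) = o(1). \]
Unpacking the semiclassical norm yields $\|r_s\|_{L^2(M; E)} = o(1)$ and $\|dr_s\|_{L^2} = o(h^{-1}) = o(|\tau|)$, so $\|r_s\|_{H^1(M; E)} = o(|\tau|)$; smoothness of $u$ then follows from elliptic regularity for the homogeneous equation $\mathcal{L}_{g, A, Q} u = 0$. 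The heavy lifting has been done upstream in the Gaussian beam construction and the Carleman estimate, so the only point one should verify is that the $o(|\tau|)$ quasimode rate survives the bounded zeroth- and first-order perturbations produced by the conformal rescaling and the $e^{i\lambda x_1}$ conjugation, which is automatic because Theorem \ref{main} and Remark \ref{mainvector} are formulated for arbitrary smooth unitary connections and endomorphisms.
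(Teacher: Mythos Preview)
Your argument follows the same two-step scheme as the paper: reduce to the product metric via the conformal identity, take $v_s$ to be the Gaussian beam, and produce $r_s$ from the solvability estimate of Theorem~\ref{carleman}. The paper's version is marginally different only in bookkeeping: it keeps the complex phase $s = \tau + i\lambda$ throughout and, instead of absorbing $e^{-i\lambda x_1}$ into a shifted connection $A'$, applies Theorem~\ref{carleman} on $(M,g)$ directly to solve for the unknown $c^{-\frac{n-2}{4}} e^{-i\lambda x_1} r_s$ (see \eqref{eqnconstruction}). Your route through the product metric $\tilde g$ and the shifted $A'$ is an equally valid way to land in the hypotheses of Theorem~\ref{carleman}.

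There is, however, one point you should tighten. When you write ``Applying Remark~\ref{mainvector} to the operator $\mathcal{L}_{\tilde g, A', \tilde Q}$'' with the displayed quasimode bound in terms of $e^{\tau x_1}$, you are implicitly invoking Theorem~\ref{main} with the \emph{real} parameter $\tau$ and connection $A'$. That produces a Gaussian beam $e^{i\tau\Theta}a'$, whereas the $v_s$ referenced in the proposition (and used downstream in Theorem~\ref{mainrecovery}) is the one from Theorem~\ref{main} with the original $A$ and the \emph{complex} parameter $s$, namely $e^{is\Theta}a$. These are genuinely different functions (differing by a factor $e^{-\lambda\Theta}$ and by $a$ versus $a'$), and their concentration limits carry different data: the $e^{-2\lambda t}$ weight and the phases $\Phi_1, \Phi_2$ attached to $A$, not to $A'$. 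The cleanest fix is to take $v_s$ directly from Theorem~\ref{main}/Remark~\ref{mainvector} with $A$ and $s$ --- that theorem already delivers the bound $\lVert e^{sx_1}\Lapl_{\tilde g, A, \tilde Q}e^{-sx_1} v_s\rVert_{L^2} = o(|\tau|)$ for complex $s$ --- and reserve the $A'$ (or $e^{-i\lambda x_1}$) trick solely for the Carleman step, where the weight must be real. With that adjustment your proof coincides with the paper's.
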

\begin{proof}
Let us firstly notice the identity:
\begin{align*}
c^{\frac{n+2}{4}} \Lapl_{g, A, Q} (u) = \Lapl_{\tilde{g}, A, c(Q + Q_c)}(e^{-sx_1}(v_s + r_s))
\end{align*}
where $Q_c = c^{\frac{n-2}{4}} \Delta_g (c^{-\frac{n-2}{4}})$. Therefore, if we let $v_s$ be the function constructed in the proof of Theorem \ref{main}, with all its concentration properties, we will have 
\begin{align*}
\lVert{e^{sx_1}\Lapl_{\tilde{g}, A, c(Q + Q_c)}e^{-sx_1} v_s}\rVert_{L^2(M; E)} = o(|\tau|)
\end{align*}
Hence, to have the required form of the solution, $r_s$ must satisfy 
\begin{align}\label{eqnconstruction}
e^{\tau x_1}\Lapl_{g, A, Q}e^{-\tau x_1} (c^{-\frac{n-2}{4}}e^{-i\lambda x_1}r_s) = - c^{-\frac{n+2}{4}}e^{-i\lambda x_1}e^{s x_1}\Lapl_{\tilde{g}, A, c(Q + Q_c)}e^{-sx_1} v_s
\end{align}
But fortunately, now the right hand side is $o(|\tau|)$ by construction and $c$ is bounded, hence we may apply the existence theorem -- Theorem \ref{carleman}.
\end{proof}

\begin{rem}\rm \label{tauKconstruction}
Note that in Theorem \ref{mainconstruction} we can do better with the estimate on the $H^1(M; E)$ norm of $r_s$, by invoking Remark \ref{tauK} and the improved estimate on the asymptotics of $\lVert e^{sx_1} \Lapl_{A, Q} e^{-sx_1}$\\$v_s \rVert_{L^2(M; E)} = O(|\tau|^{-K})$ for any $K \geq 0$. Moreover, this implies that with the improved estimate on $v_s$ we have the $L^2$ norm of the right hand side of \eqref{eqnconstruction} equal to $O(|\tau|^{-K})$ and consequently, by Theorem \ref{carleman}, we have:
\begin{align*}
\lVert{r_s}\rVert_{L^2(M; E)} = O\big(|\tau|^{-(K + 1)}\big) \quad \text{and} \quad \lVert{r_s}\rVert_{H^1(M; E)} = O\big(|\tau|^{-K}\big)
\end{align*}
or equivalently, $H^1_{scl}(M; E) = O\big(|\tau|^{-(K+1)}\big)$.
\end{rem}

\begin{rem}\rm
Having been through the lengthy proof of existence of Gaussian Beams in case of the connection Laplacian, we now give an alternative idea on how to generalise the notion of quasimodes. Namely, it is natural to attempt to construct the analogous quantity to the approximate eigenfunction that satisfies $\lVert{(\Delta - s^2)v_s}\rVert_{L^2(M_0)} = o(|\tau|)$ by asking that $\lVert{(-d^*_Ad_A - s^2)v_s}\rVert_{L^2(M_0)} = o(|\tau|)$. However, by generalising in this way, we lose the purpose of it: we cannot build the CGO solutions using such construction. Thus, even though the construction of such solutions \textit{should} be possible and completely analogous to our main construction, we cannot find any application for it.
\end{rem}



\section{Main recovery}\label{secrecovery}

In this section we perform the last step of the procedure described in the introduction and insert the previously constructed solutions into the integral identity. By using the density of such solutions, we reduce the problem for line bundles to an $X$-ray transform on $M_0$. More precisely, in Theorem \ref{mainrecovery} we prove $dA_1 = dA_2$ if $\Lambda_{A_1} = \Lambda_{A_2}$, in the full data case. For the case of partial data, one should take extra care to deal with the leftover terms -- this is done in Theorem \ref{therem}. We use notation from Section \ref{sec5.2}.

\begin{theorem}[Main recovery for full data]\label{mainrecovery}
Suppose $A_1$ and $A_2$ are two unitary connections on $E = M \times \mathbb{C}$ and that the DN maps $\Lambda_{A_1} = \Lambda_{A_2}$ are the same. If the geodesic ray transform on $M_0$ is injective on $1$-forms and functions, then we must have $dA_1 = dA_2$.
\end{theorem}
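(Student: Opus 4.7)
The plan is to substitute the CGO solutions of Proposition \ref{mainconstruction} into the integral identity of Theorem \ref{identity}, take the Carleman parameter $\tau\to\infty$, Fourier-invert in the variable $x_1$, and appeal to the hypothesis on the geodesic X-ray transform on $M_0$. Concretely, with $s=\tau+i\lambda$ for $\tau,\lambda\in\mathbb{R}$, form $u=e^{-sx_1}c^{-(n-2)/4}(v_s+r_s)$ solving $\Lapl_{A_1}u=0$, and $v=e^{sx_1}c^{-(n-2)/4}(\omega_s+\tilde r_s)$ solving $\Lapl_{A_2}v=0$ (the latter is legitimate since $A_2$ unitary makes $\Lapl_{A_2}$ self-adjoint). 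Because $\Lambda_{A_1}=\Lambda_{A_2}$, the boundary term in the identity vanishes. The exponentials $e^{\mp sx_1}$ cancel in each of the products $u\bar v,\ u\,d\bar v,\ \bar v\,du$, but an $x_1$-derivative of $u$ or $\bar v$ introduces a factor of $s=\tau+i\lambda$, so the term involving $\tilde A:=A_2-A_1$ grows like $\tau$, whereas the potential-type term in $|A_1|^2-|A_2|^2$ stays $O(1)$.

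After dividing by $\tau$ and letting $\tau\to\infty$, the $|A_i|^2$ term and the contributions from the remainders $r_s,\tilde r_s$ (controlled by Proposition \ref{mainconstruction} and Remark \ref{dv_s}) vanish. The dominant contribution comes from the $s\,dx_1$-pieces of $du$ and $d\bar v$, to be handled by the concentration identities of Theorem \ref{main}---for $v_s\bar\omega_s$ as well as for the gradient pairings $\langle\alpha,dv_s\rangle\bar\omega_s$ and $\langle\alpha,d\bar\omega_s\rangle v_s$. The conformal factors $c^{-(n-2)/4}$ from $u$ and $v$ combine with $dV_g=c^{n/2}dx_1\,dV_{g_0}$ and with the inner product $\langle dx_1,\tilde A\rangle_g=\tilde A_1/c$ to produce a clean limit. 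I expect to arrive, for every $\lambda\in\mathbb{R}$ and every non-tangential geodesic $\gamma:[0,L]\to M_0$, at
\begin{equation*}
0=\int_{\mathbb{R}}e^{-2i\lambda x_1}\!\int_0^L e^{-2\lambda t}\,e^{\Phi_1+\bar\Phi_2}\bigl[\tilde A_1(x_1,\gamma(t))+i\,\tilde A(\dot\gamma(t))(x_1,\gamma(t))\bigr]\,dt\,dx_1,
\end{equation*}
where $\tilde A=\tilde A_1\,dx_1+\tilde A_{M_0}$ is the longitudinal/transversal decomposition and $\Phi_1,\Phi_2$ solve the scalar $\bar\partial$-equations \eqref{fijne} associated to $A_1$ and $A_2$ along $\gamma$.

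To disentangle the two roles of $\lambda$---Fourier dual to $x_1$ and attenuation in $t$---I will exploit the freedom in the Gaussian beam construction (Remark \ref{tauK}) of multiplying the amplitude by an arbitrary antiholomorphic function $e^{H(\bar z)}$ with $z=x_1+it$. Combined with boundary determination, which, after a gauge transformation identity on $\partial M$, lets me extend $A_1=A_2$ outside $M$ so that $\tilde A$ has compact $x_1$-support, this produces a family of identities parametrised by $\lambda\in\mathbb{R}$ and by antiholomorphic $H$. Fourier-inverting in the compactly supported $x_1$-variable and using the density of the antiholomorphic freedom to probe the weight $e^{\Phi_1+\bar\Phi_2}$, together with the joint gauge invariance of the identity (a simultaneous shift $A_i\mapsto A_i+d\sigma$ modifies $\Phi_1+\bar\Phi_2$ without changing $\tilde A$), I plan to reduce the weighted identity to the vanishing of the unattenuated geodesic X-ray transform on $M_0$ of the function--$1$-form pair $\bigl(\tilde A_1(x_1,\cdot),\,i\tilde A_{M_0}(x_1,\cdot)\bigr)$, valid for every $x_1\in\mathbb{R}$.

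The injectivity hypothesis then yields $\tilde A_1(x_1,\cdot)\equiv 0$ and $\tilde A_{M_0}(x_1,\cdot)=d_{M_0}p(x_1,\cdot)$ for a smooth $p$ vanishing on $\mathbb{R}\times\partial M_0$, which after adding an $x_1$-dependent constant can be normalised to vanish on all of $\partial M$. The gauge transformation $\psi=e^{-p}$ is then identity on $\partial M$ and replaces $A_2$ by $A_2-dp$; the new difference $\tilde A'=\tilde A-dp=-\partial_{x_1}p\,dx_1$ has only a longitudinal component. Running the argument a second time for the pair $(A_1,A_2-dp)$---which still satisfies $\Lambda_{A_1}=\Lambda_{A_2-dp}$ by gauge invariance---and invoking the X-ray injectivity on \emph{functions} alone forces $\partial_{x_1}p\equiv 0$. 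Hence $p$ depends only on the $M_0$-coordinates, so $\tilde A=dp$ globally and $dA_1=dA_2$, as required. The main obstacle I expect is the amplitude normalisation in the third paragraph: rigorously showing that the Gaussian beam freedom, combined with the joint gauge invariance of the identity, suffices to eliminate the weight $e^{-2\lambda t+\Phi_1+\bar\Phi_2}$ cleanly enough to land in the framework of the unattenuated X-ray transform that appears in the hypothesis of the theorem.
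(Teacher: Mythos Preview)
Your setup through the limiting identity is correct and matches the paper: plugging the CGOs into Theorem \ref{identity}, dividing by $\tau$, using boundary determination to extend $\tilde A=A_2-A_1$ by zero, and invoking the concentration limits of Theorem \ref{main} does lead (up to conjugation and signs) to
\[
0=\int_{\mathbb R}\int_0^L e^{-2i\lambda x_1}e^{-2\lambda t}\,h(\bar z)\,e^{\Phi_1+\bar\Phi_2}\bigl(\tilde A_1-i\tilde A_t\bigr)\,dt\,dx_1
\]
for every antiholomorphic $h$ in $z=x_1+it$. Your observation about ``joint gauge invariance'' is inert here: from \eqref{eqnfi} one has $\partial_z(\Phi_1+\bar\Phi_2)=\tfrac12(\tilde A_1-i\tilde A_t)$, so a simultaneous shift $A_i\mapsto A_i+d\sigma$ changes $\Phi_1+\bar\Phi_2$ only by an antiholomorphic function, which is already absorbed in the $h$-freedom.

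The genuine gap is the ``Fourier-invert in $x_1$'' step. Note that $e^{-2i\lambda x_1}e^{-2\lambda t}=e^{-2i\lambda\bar z}$ is itself antiholomorphic, so it too is absorbed into $h$: the family of identities you have is exactly one orthogonality relation against antiholomorphic functions of $\bar z$, not a family indexed separately by a Fourier variable in $x_1$ and an attenuation in $t$. Antiholomorphic functions of $x_1+it$ cannot probe $x_1$ and $t$ independently, so you cannot extract an unattenuated X-ray identity for each fixed $x_1$. The paper proceeds differently. First, Stokes plus the antiholomorphic freedom and a logarithm argument (Lemma \ref{holo_restrict}) eliminate the weight $e^{\Phi_1+\bar\Phi_2}$, leaving $\int_0^L e^{-\lambda r}\bigl(f(\lambda,\gamma(r))+i\alpha(\lambda,\dot\gamma(r))\bigr)dr=0$ with $f=\mathcal F_{x_1}(\tilde A_1)$ and $\alpha=\mathcal F_{x_1}(\tilde A_{M_0})$; the attenuation $e^{-\lambda r}$ and the Fourier parameter $\lambda$ remain coupled. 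The decoupling is then done by an \emph{inductive} scheme: differentiate repeatedly in $\lambda$, evaluate at $\lambda=0$ where the transform is unattenuated, apply the X-ray injectivity hypothesis to produce potentials $p_j$ with $\partial_\lambda^j\alpha|_{\lambda=0}=-i\,dp_j$ and $\partial_\lambda^jf|_{\lambda=0}=-jp_{j-1}$, and use integration by parts to cancel the polynomial-in-$r$ factors at the next step. Finally, since $\tilde A$ has compact $x_1$-support, $f$ and $\alpha$ are entire in $\lambda$, so the Taylor relations sum to $f=\lambda\beta$, $\alpha=i\,d'\beta$ for a single $\beta$, which yields $d\tilde A=0$ directly. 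Your iterated-gauge endgame would also work \emph{if} you could reach the per-$x_1$ unattenuated identity, but that is precisely the step that fails without the inductive $\lambda$-argument.
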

\begin{proof}
Let $\tilde{A} = A_2 - A_1$. By Theorem \ref{mainconstruction}, we have the solutions
\begin{align}\label{CGOeqn}
u = e^{-(\tau + i\lambda)x_1}c^{-\frac{n-2}{4}}(v_1 + r_1) \quad \text{and} \quad v = e^{(\tau + i\lambda)x_1} c^{-\frac{n-2}{4}}(v_2 + r_2)
\end{align}
to the equations $\Lapl_{A_1} u = 0$ and $\Lapl_{A_2} v = 0$, with the desired concentration and decay properties. It is worth noting that $v_i$s are defined on the whole $J_0 \times M_0$, where $J_0 = [-N, N]$ for some large $N$ and $r_i$s on $M$. By applying Theorem \ref{identity}, we obtain the following equality ($dV_g$ is the volume form):
\begin{align}\label{integral}
\int_M (|A_2|_g^2 - |A_1|_g^2)u \bar{v} dV_g + \int_M \Big\langle{ud\bar{v} - \bar{v}du, A_2 - A_1}\Big\rangle_g dV_g = 0
\end{align}
Observe that in the first factor we have $q_i$s and $A_i$s bounded, which together with $L^2$ bounds on $v_i$s and $r_i$s from the construction theorem gives us that the first term is equal to $O(1)$. Now, we will divide by $\tau$ and take the $\tau \to \infty$ limit. First note that:
\begin{multline*}
\bar{v} du = e^{- 2i\lambda x_1} c^{-\frac{n-2}{2}} \Big( \big( c^{\frac{n-2}{4}} d(c^{-\frac{n-2}{4}}) - (\tau + i\lambda)\big) dx_1 (\bar{v}_2 + \bar{r}_2)(v_1 + r_1)\\
 + (\bar{v}_2 + \bar{r}_2)(dv_1 + dr_1)\Big)
\end{multline*}
and a similar formula holds for $u d\bar{v}$. The factor containing the derivative of $c$ will be zero in the limit, when divided by $\tau$.  Therefore, when plugging in these expressions in \eqref{integral}, we can neglect the $r_i$ factors and hence obtain the limit:
\begin{multline}\label{pluggedidentity}
\lim_{\tau \to \infty} \frac{1}{\tau} \int_M \langle{\tilde{A}, \bar{v} du}\rangle_g dV_g = \lim_{\tau \to \infty} \frac{1}{\tau} \int_M e^{-2i\lambda x_1} c^{-\frac{n-2}{2}}\frac{1}{c}\Big\langle \tilde{A}, (-\tau + i\lambda)v_1 \bar{v}_2 dx_1 \\
+ \bar{v}_2 dv_1\Big\rangle_{\tilde{g}}c^{\frac{n}{2}}dV_{\tilde{g}} = \int_0^L \int_{-\infty}^{\infty} e^{-2i \lambda x_1} (-\tilde{A}_1 + i\tilde{A}_t) e^{\Phi_1 + \bar{\Phi}_2} e^{-2\lambda t} dx_1 dt
\end{multline}
where, in the second line we have gone from the integral over $M$ to an integral over $\mathbb{R} \times M_0$; this is allowed since, by a \emph{boundary determination} result, we can assume that $\tilde{A}|_{\partial M} = 0$ to infinite order. Moreover, we may pick $N$ such that the interior of $J_0 \times M_0$ contains the supports of extensions of $A_1$ and $A_2$. 

Also, we used that the inner product on forms is given by the inverse of the metric $g$; hence the $\frac{1}{c}$ factor cancels with the other $c$ factors. The $\Phi_i$ functions satisfy the equations \eqref{fijne}, where $X = -2g^{ij}(A_1)_i \frac{\partial}{\partial x^j}$ and $Y  = -2g^{ij}(A_2)_i \frac{\partial}{\partial x^j}$ are the first order terms of the connection Laplacian:
\begin{align}\label{eqnfi}
\frac{\partial \Phi_1}{\partial z} = \frac{1}{2}(-(A_1)_1 + i (A_1)_t) \quad \text{and} \quad \frac{\partial \overline{\Phi}_2}{\partial z} = \frac{1}{2}((A_2)_1 - i (A_2)_t)
\end{align}
where $z = x_1 + it$ is the complex variable and $\bar{z} = x_1 - it$ is its conjugate. By summing the two equations, we get:
\begin{align}\label{integralbyparts}
\frac{\partial (\Phi_1 + \overline{\Phi}_2)}{\partial z} = \frac{1}{2}(\tilde{A}_1 - i \tilde{A}_t)
\end{align}
Now we obtain a similar expression for the $u d\bar{v}$ part, namely:
\begin{align*}
\lim_{\tau \to \infty} \frac{1}{\tau} \int_M \langle{\tilde{A}, ud\bar{v}}\rangle = \int_0^L \int_{-\infty}^{\infty} e^{-2i \lambda x_1} (\tilde{A}_1 - i\tilde{A}_t) e^{\Phi_1 + \overline{\Phi}_2} e^{-2\lambda t} dx_1 dt
\end{align*}
and finally obtain the limit for \eqref{integral}:
\begin{align}\label{desired_limit}
0 = \int_0^L \int_{-\infty}^{\infty} e^{-2i \lambda x_1} (\tilde{A}_1 - i\tilde{A}_t) e^{\Phi_1 + \overline{\Phi}_2} e^{-2\lambda t} dx_1 dt
\end{align}
By using Stokes' theorem and noting that $dz \wedge d\bar{z} = 2i dx_1 \wedge dt$, together with $\eqref{integralbyparts}$, on a smooth subdomain $\Omega \subset \mathbb{R} \times [0, L ]$ which contains the support of $\tilde{A}$:
\begin{align*}
0 = \int_\Omega d \Big(e^{-2i \lambda x_1} e^{- 2\lambda t} e^{\Phi_1 + \overline{\Phi}_2} d\bar{z} \Big) = \int_{\partial \Omega} e^{-2i \lambda x_1} e^{- 2\lambda t} e^{\Phi_1 + \overline{\Phi}_2} d\bar{z}
\end{align*}
Now by exploiting the fact that we could put an arbitrary anti-holomorphic $h$ as a multiplier of $e^{\Phi_1}$, we obtain the integral identity:
\begin{align}\label{logarithmstep0}
0 = \int_{\partial \Omega} e^{-2i \lambda x_1} e^{- 2\lambda t} h e^{\Phi_1 + \overline{\Phi}_2} d\bar{z}
\end{align}
for all such $h$. Let us take $\Omega$ simply-connected, e.g. $\Omega = J_0 \times [0, L]$ (smoothed out at the corners). This means that upon conjugating, by the proof of Lemma \ref{holo_restrict} (which proves a more general, matrix version of what we need here), the restriction of the function $e^{\overline{\Phi}_1 + \Phi_2}$ at the boundary is a restriction of a non-vanishing holomorphic function $F$, defined on $\Omega$, i.e. $F|_{\partial \Omega} = e^{\overline{\Phi}_1 + \Phi_2}|_{\partial \Omega}$. Moreover, since $\Omega$ is simply-connected, we can find a logarithm, so that $F = e^G$, where $G$ is holomorphic and we may assume $G|_{\partial \Omega} = \overline{\Phi}_1 + \Phi_2$. After using Stokes' theorem again with $\bar{h} = Ge^{-G}$, we obtain:
\begin{align}\label{logarithmstep}
0 = \int_{\Omega} e^{2\lambda i(x_1 + it)} (\tilde{A}_1 + i \tilde{A}_t) dz \wedge d\bar{z}
\end{align}
and so finally:
\begin{align*}
0 = \int_0^L\int_{-\infty}^{\infty} e^{2\lambda i(x_1 + it)} (\tilde{A}_1 + i \tilde{A}_t) dx_1 dt
\end{align*}
Let us define:
\begin{align}\label{notation1}
f(\lambda, x') &= \int_{-\infty}^{\infty} e^{i\lambda x_1} \tilde{A}_1(x_1, x') dx_1 = \mathcal{F}(\tilde{A}_1)(\lambda, x')\\
\alpha(\lambda, x') &= \sum_{j = 2}^n  \Big(\int_{-\infty}^{\infty} e^{i\lambda x_1} \tilde{A}_j(x_1, x') dx_1\Big) dx^j = \sum_{j = 2}^n \mathcal{F}(\tilde{A}_j)(\lambda, x') dx^j \label{notation2}
\end{align}
where $\mathcal{F}$ denotes the Fourier transform; we will write $\mathcal{F}(\alpha)$ for the Fourier transform of a compactly supported $1$-form $\alpha$ on $\mathbb{R} \times M_0$. With this notation, the identity above becomes (replace $2\lambda$ with $\lambda$ without loss of generality and relabel $t$ by $r$):
\begin{align*}
0 = \int_0^L e^{-\lambda r} (f + i\alpha(\dot{\gamma}(r)))dr
\end{align*}
along any unit speed, non-tangential geodesic in $M_0$. We would like to use the fact that the geodesic transform is injective as much as we can, even though we obtained an attenuated transform. Thus we set $\lambda = 0$ and use the injectivity of the ray transforms to get $\alpha(0, x') = -idp_0$ and $f(0, x') = 0$ for some smooth $p_0$ such that $p_0|_{\partial M_0} = 0$. Furthermore, we can take the $\frac{\partial}{\partial \lambda}$ derivative of the integral to get:
\begin{align*}
\int_0^L e^{-\lambda r} \Big(-r(f + i\alpha) + \frac{\partial}{\partial \lambda}(f + i\alpha)\Big)dr = 0
\end{align*}
Again we plug in $\lambda = 0$ and use injectivity, together with the following calculation:
\begin{align*}
\int_0^L ri\alpha dr = \int_0^L r\frac{\partial p_0}{\partial r} dr = -\int_0^L p_0 dr
\end{align*}
where we used the fact that $p_0$ vanishes at the boundary. Now using that $f = 0$, we obtain at $(0, x')$ for all $x' \in M_0$:
\begin{align*}
p_0 + \frac{\partial f}{\partial \lambda} = 0 \quad \text{and} \quad \frac{\partial \alpha}{\partial \lambda} = -idp_1
\end{align*}
for some smooth $p_1$ which vanishes at the boundary. It is now clear how we are going to proceed with this inductively, but let us go one step further for clarity. Taking another derivative with respect to $\lambda$, we have:
\begin{align*}
\int_0^L e^{-\lambda r}\Big(r^2(f + i\alpha) - 2r \frac{\partial (f + i\alpha)}{\partial \lambda} + \frac{\partial^2 (f + i\alpha)}{\partial \lambda^2}\Big) = 0
\end{align*}
Now by partial integration and using the properties of $p_0$, $p_1$, we have:
\begin{align*}
\int_0^L r^2 i\alpha dr = -\int_0^L 2r p_0 dr \quad \text{and} \quad \int_0^L ri\frac{\partial \alpha}{\partial \lambda} = -\int_0^L p_1 dr
\end{align*}
Therefore, by plugging in $\lambda = 0$ and substituting:
\begin{align*}
\int_0^L  \Big(\big(2p_1 + \frac{\partial^2 f}{\partial \lambda^2}\big) + i \frac{\partial^2 \alpha}{\partial \lambda^2}\Big) dr = 0
\end{align*}
Again, we get some smooth $p_2$ vanishing at the boundary such that $\frac{\partial^2 \alpha}{\partial \lambda^2} = -i dp_2$ and $2p_1 + \frac{\partial^2 f}{\partial \lambda^2} = 0$.

Now, let us assume inductively that $\frac{\partial^j \alpha}{\partial \lambda^j} = -i dp_j$ and $j p_{j-1} + \frac{\partial^j f}{\partial \lambda^j} = 0$, for $j = 0, 1, \dotsc, n - 1$ and $p_j$ are smooth functions on $M_0$ vanishing at the boundary (with $p_{-1} = 0$ predefined). We will prove the existence of $p_n$ by induction. Let us define:
\begin{align*}
S = \frac{\partial^n}{\partial \lambda^n} \int_0^L e^{-\lambda r}(f + i\alpha) dr = \int_0^L e^{-\lambda r} \sum_{j = 0}^n \Bigg(\binom{n}{j}(-1)^j r^j \frac{\partial^{n - j}(f + i\alpha)}{\partial \lambda^{n - j}} \Bigg)dr = 0
\end{align*}
Now, using the following formulas for $\lambda = 0$:
\begin{align*}
\int_0^L r^i \frac{\partial^{n - i}f}{\partial \lambda^{n - i}} dr= \int_0^L r^i \Big(-(n - i)p_{n - i - 1}\Big) dr = -(n - i)\int_0^L r^i p_{n - i - 1}dr
\end{align*} 
valid for $i > 0$ and:
\begin{align*}
\int_0^L r^k \frac{\partial^{n - k} (i\alpha)}{\partial \lambda^{n - k}} dr = \int_0^L r^k \frac{\partial p_{n - k}}{\partial r} dr = -k \int_0^L r^{k - 1}p_{n - k} dr
\end{align*}
for $k > 0$, and inserting them in the expression for $S$, we get:
\begin{multline*}
S = \int_0^L \frac{\partial^n (f + i\alpha)}{\partial \lambda^n} dr+ \sum_{j = 1}^n \binom{n}{j} (-1)^j \int_0^L \Big( (-(n - j)r^j p_{n - j - 1} - jr^{j - 1} p_{n - j})\Big) dr \\
= \int_0^L r^0 \Big(\frac{\partial^n (f + i\alpha)}{\partial \lambda^n} + np_{n - 1}\Big) dr + \int_0^L r^1 \Big( n(n - 1)p_{n - 2} - 2p_{n - 2} \cdot \frac{n(n - 1)}{2}\Big) dr +\\
+ \dotsb + \int_0^L r^j\Big(\binom{n}{j}(-1)^{j+1}(n - j) p_{n - j - 1} - (j + 1)p_{n - j - 1}\binom{n}{j + 1}(-1)^{j+1} \Big) dr + \dotsb\\
 = \int_0^L \Big( \frac{\partial^n (f + i \alpha)}{\partial \lambda^n} + np_{n-1}\Big)dr = 0
\end{multline*}
where the last line is true by cancelling the expressions in the brackets for $r^j$, where $j > 0$. Therefore, by the injectivity of the X-ray transform we have $\frac{\partial^n f}{\partial \lambda^n} + np_{n - 1} = 0$ and $\frac{\partial^n (\alpha)}{\partial \lambda^n} = -i dp_n$, for some smooth $p_n$ vanishing at the boundary. This finishes the proof by induction.

From \eqref{notation1} it follows that $\frac{\partial^k f}{\partial \lambda^k} \big|_{\lambda = 0} \leq C^k$ for some positive $C$ and all $k$, so we see that
\begin{align*}
\beta(\lambda, x') := - \sum_{k = 0}^\infty p_k(x') \frac{\lambda^k}{k!}
\end{align*} 
converges and since the Fourier transform of a compactly supported function is analytic:
\begin{align}\label{inj1}
f = \sum_{k= 0}^\infty \frac{\partial^k f}{\partial \lambda^k}\Big|_{\lambda = 0} \frac{\lambda^k}{k!} = -\sum_{k = 0}^\infty kp_{k-1} \frac{\lambda^k}{k!} = \lambda \beta 
\end{align}
and similarly, by using the relation $\frac{\partial^j \alpha}{\partial \lambda^j} = -idp_j$ (for all $j \geq 0$) we get that
\begin{align}\label{inj2}
\alpha = \sum_{k = 0}^\infty \frac{\partial^k \alpha}{\partial \lambda^k} \Big|_{\lambda = 0} \frac{\lambda^k}{k!} = -i \sum_{k = 0}^\infty dp_k \frac{\lambda^k}{k!} = i d'\beta
\end{align}
where $d'$ denotes exterior differentiation in $M_0$. Coming back to the main proof, we see that:
\begin{align*}
\sum_{2 \leq j < k} \mathcal{F}\Big(\partial_j \tilde{A}_k - \partial_k \tilde{A}_j\Big)dx^j \wedge dx^k = d'\alpha = 0
\end{align*}
Again, the Fourier transforms of the quantities on the left hand side are analytic and thus $\partial_j \tilde{A}_k \equiv \partial_k \tilde{A}_j$ for all $j, k \geq 2$. Furthermore, since we have

\begin{align*}
\mathcal{F}\Big(\partial_j \tilde{A}_1 - \partial_1 \tilde{A}_j\Big) = \partial_j f + i\lambda \alpha_j = 0
\end{align*}
for all $j \geq 2$ by \eqref{inj1} and \eqref{inj2}, in the same manner as before we have that $\partial_j \tilde{A}_1 \equiv \partial_1 \tilde{A}_j$; gluing this information together, we finally conclude that $d\tilde{A} = 0$ or equivalently that $dA_1 = dA_2$. This finishes the proof.
\end{proof}

Now we depart to partial data, which is more technical. More precisely, we have to worry about the leftover terms in the partial integration and how we extend the connections outside $M$, since now boundary determination works only on a part of the boundary, so $A_1 - A_2$ is only $L^\infty$ when extended by zero.

\begin{theorem}[Partial boundary data case]\label{therem}
In the same notation as in Theorem \ref{mainrecovery}, we prove $dA_1 = dA_2$ given $\Lambda_{A_1}|_{\Gamma} = \Lambda_{A_2}|_{\Gamma}$, where $\Gamma$ is a neighbourhood of the front side $\partial M_-$.
\end{theorem}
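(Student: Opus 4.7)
The plan is to adapt the proof of Theorem \ref{mainrecovery}, the only new ingredient being the boundary Carleman estimate of Remark \ref{partialboundary}, which is needed to control the extra boundary contribution from $\partial M\setminus\Gamma$ that was absent in the full data setting.

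First I would construct the CGO solutions $u = e^{-sx_1}c^{-(n-2)/4}(v_s + r_s)$ solving $\Lapl_{A_1}u = 0$ and $v = e^{sx_1}c^{-(n-2)/4}(\omega_s + \tilde r_s)$ solving $\Lapl_{A_2}v = 0$ as in Proposition \ref{mainconstruction}, invoking the polynomially decaying residues $\|r_s\|_{H^1_{scl}(M;E)} = O(|\tau|^{-K})$ provided by Remark \ref{tauKconstruction} (and likewise for $\tilde r_s$). Applying Theorem \ref{identity} and splitting
\begin{align*}
\big((\Lambda_{A_1}-\Lambda_{A_2})f,g\big)_{\partial M} = \big((\Lambda_{A_1}-\Lambda_{A_2})f,g\big)_\Gamma + \big((\Lambda_{A_1}-\Lambda_{A_2})f,g\big)_{\partial M\setminus\Gamma},
\end{align*}
the first term vanishes by hypothesis, and since $\Gamma$ contains a neighbourhood of the front face, the remaining contribution is supported in $\partial M_+$.

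To bound the back-face term, I would introduce $u_2\in H^1(M;E)$ solving $\Lapl_{A_2}u_2 = 0$ with $u_2|_{\partial M} = u|_{\partial M}$, and set $w = u - u_2 \in H^1_0(M;E)$, so that $(\Lambda_{A_1}-\Lambda_{A_2})f = \partial_\nu w|_{\partial M}$ and $\Lapl_{A_2}w = (\Lapl_{A_2}-\Lapl_{A_1})u$, a first-order operator with smooth compactly supported coefficients applied to $u$. The key observation is the factorisation
\begin{align*}
\partial_\nu w\cdot \bar v = \bigl(e^{\tau x_1}\partial_\nu w\bigr)\cdot\bigl(e^{-\tau x_1}\bar v\bigr),
\end{align*}
which by Cauchy--Schwarz is bounded by the product of the Carleman-weighted trace $\|e^{\tau x_1}\partial_\nu w\|_{L^2(\partial M_+;E)}$ and the dephased CGO trace $\|e^{-\tau x_1}v\|_{L^2(\partial M;E)}$. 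The first factor is controlled by \eqref{Carlemanboundary1}, giving $\sqrt{h}\,\|e^{\tau x_1}(\Lapl_{A_2}-\Lapl_{A_1})u\|_{L^2(M;E)} = O(\sqrt{|\tau|})$ after plugging in the CGO bounds on $u$; the second factor equals $\|c^{-(n-2)/4}e^{i\lambda x_1}(\omega_s+\tilde r_s)\|_{L^2(\partial M;E)} = O(1)$ by the boundary estimate recorded in the proof of Theorem \ref{main}. Thus the back-face term is $O(\sqrt{|\tau|}) = o(|\tau|)$, so it vanishes after dividing by $|\tau|$ and passing to the limit.

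Once the boundary term is absorbed, the interior identity reduces at leading order in $\tau$ to exactly the same expression \eqref{desired_limit} as in the full data case. The only subtlety specific to partial data is that boundary determination now yields $A_1\equiv A_2$ to infinite order only on $\Gamma$, so extending $\tilde A = A_2 - A_1$ by zero to $\mathbb{R}\times M_0$ produces a compactly supported form which is only $L^\infty$ and may jump across $\partial M\setminus\Gamma$. By Paley--Wiener the Fourier transform in $x_1$ of such a form is nevertheless entire and analytic in $\lambda$, so the inductive Taylor expansion recovery of the potentials $p_k$ satisfying $\partial^j\alpha/\partial\lambda^j|_{\lambda=0} = -i\,dp_j$ and $jp_{j-1}+\partial^j f/\partial\lambda^j|_{\lambda=0} = 0$ carries through verbatim, yielding $d\tilde A = 0$ and hence $dA_1 = dA_2$. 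The hard part is the back-face estimate: it is essential that the CGO phase $e^{\pm sx_1}$ and the Carleman weight $e^{\varphi/h}$ with $\varphi = x_1$ be chosen in agreement, so that the exponentials cancel in the pairing and no uncontrolled growth in $\tau$ survives.
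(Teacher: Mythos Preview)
Your overall architecture is right---control the back-face boundary term via the Carleman estimate with boundary \eqref{Carlemanboundary1}, then reduce to the full-data interior analysis---but you have glossed over the two places where the partial-data proof genuinely diverges from Theorem~\ref{mainrecovery}, and both are substantial.

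First, the passage to the limit identity \eqref{desired_limit} does \emph{not} go through as stated. The Gaussian beam construction in Theorem~\ref{main} requires smooth vector fields $X,Y$ on $J_0\times M_0$, so to build the CGOs you must extend $A_1,A_2$ smoothly; but then $\tilde A^\epsilon=A_2^\epsilon-A_1^\epsilon$ is supported in a slightly larger set $M^\epsilon\supset M$, and the integral identity you derive is over $M$, not $M^\epsilon$. The discrepancy---an integral over the collar $N^\epsilon=M^\epsilon\setminus M$---has to be shown to vanish as $\epsilon\to 0$, and this is delicate: the paper needs a Sard/Fubini transversality argument to ensure the geodesic meets $\partial N^\epsilon_{x_1}$ transversally for a.e.\ $x_1$ and a.e.\ geodesic, plus a uniform convergence $\Phi_i^\epsilon\to\Phi_i$ obtained from explicit Cauchy integral bounds. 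Simply extending $\tilde A$ by zero and invoking the concentration formulas is not legitimate, since those formulas are stated for \emph{continuous} test functions $\phi$, and the phases $\Phi_i$ come from solving transport equations with the extended (smooth) connections.

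Second, the final step---deducing $d\tilde A=0$ from $d'\alpha=0$ and $d'f+i\lambda\alpha=0$---does not carry through verbatim either. With $\tilde A$ only $L^\infty$ and jumping across $\partial M\setminus\Gamma$, integration by parts in the Fourier transform picks up boundary terms of the form $\sum_l e^{i\lambda b_l(x')}(\cdots)$ coming from the sheets $b_l(x')$ of the back face over $x'\in M_0$. The paper disposes of these by observing that such a finite exponential sum lies in $L^2(\mathbb{R}_\lambda)$ only if every coefficient vanishes; this is a genuinely new argument absent from the full-data proof. Your appeal to Paley--Wiener gives analyticity of $f,\alpha$ in $\lambda$, but that alone does not kill the jump contributions.

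A smaller point: the trace bound $\lVert\omega_s+\tilde r_s\rVert_{L^2(\partial M)}=O(1)$ is not what Theorem~\ref{main} provides---that estimate is on $\partial M_0$, not $\partial M$. On $\partial M$ one must restrict to $B_\epsilon=\{\nu_1\ge\epsilon\}$ so that the projection $\pi_2:\partial M\to M_0$ is a local diffeomorphism with bounded Jacobian; this is why the paper carefully splits off $F_\epsilon\subset\Gamma$ before applying Cauchy--Schwarz.
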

\begin{proof}
We are still able to prove $dA_1 = dA_2$ as follows. We think of the point $x_0$ in Theorem 1.1 from \cite{MagU} as the point at ``infinity" so that the rays are straight lines along the $x_1$ axis. 


Let us use the notation
\[F_{\epsilon} = \{x \in \partial M \mid \big\langle{\frac{\partial}{\partial x_1}, \nu(x)}\big\rangle = \nu_1(x) < \epsilon\}\] 
for any positive $\epsilon > 0$; we also denote $B_\epsilon = \partial M \setminus F_\epsilon$. We pick $\epsilon$ small enough such that $F_{\epsilon} \subset \Gamma$. Consider the CGO solutions $u$ and $v$ to $\Lapl_{A_1} u = \Lapl_{A_2} v = 0$ such that $u|_{\partial M} = f$ and $v|_{\partial M} = g$, of the form in \eqref{CGOeqn}. Then the assumption on the DN map gives us a smooth $w$, such that $\Lapl_{A_1} w = 0$, $w|_{\partial M} = g$ and $\partial_\nu w|_{\Gamma} = \partial_\nu v|_{\Gamma}$. Theorem \ref{identity} gives us (we plug in $A_1$ for $B$ and $A_2$ for $A$, so some terms swap places):
\begin{align}\label{partialidentitystart}
\int_{\partial M \setminus F_\epsilon} \langle{\partial_\nu (v - w), f}\rangle = \int_M{\big(|A_1|^2 - |A_2|^2\big)} v \bar{u} + \int_M \langle{vd\bar{u} - \bar{u}dv, A_1 - A_2}\rangle
\end{align}
Observe (recall) we have the following relations: $F(-\infty) = F = \partial M_{-}$, $B_{\epsilon} \subset \partial M_{+}$ and also $\langle{\frac{\partial}{\partial x_1}, \nu}\rangle = \nu_1(x) \geq \epsilon$ on $B_\epsilon$.

We claim that the term on the left hand side of \eqref{partialidentitystart} is equal to $O(|\tau|^{\frac{1}{2}})$ as $|\tau| \to \infty$ -- it is bounded by (using Cauchy-Schwarz)
\begin{align*}
\frac{1}{\sqrt{\epsilon}} \lVert{\sqrt{\partial_\nu x_1} e^{-\tau x_1} \partial_\nu(v - w)}\rVert_{L^2(B_\epsilon)} \times \lVert{c^{-\frac{n-2}{4}}(v_1 + r_1)}\rVert_{L^2(B_\epsilon)}
\end{align*}
which is in turn bounded (up to constant) by the following expression, by applying the Carleman estimate with the boundary part \eqref{Carlemanboundary1}, since $(v - w)|_{\partial M} = 0$:
\begin{multline}\label{bound}
\frac{1}{\sqrt{\epsilon}} \Big(\sqrt{h} \lVert{e^{-\tau x_1} \Lapl_{A_1} (v - w)}\rVert_{L^2(M)} + \lVert{\sqrt{-\partial_\nu x_1} e^{-\tau x_1} \partial_\nu (v - w)}\rVert_{L^2(\partial M_{-})}\Big)\\
\times \Big(\lVert{v_1}\rVert_{L^2(B_\epsilon)} + \lVert{r_1}\rVert_{L^2(\partial M)}\Big)
\end{multline}
The second summand in the first line of \eqref{bound} is zero by the assumption; the first one is bounded by considering the following formula:
\begin{multline*}
\Lapl_{A_1}(v - w) = \Lapl_{A_1} v = \big(\Lapl_{A_1} - \Lapl_{A_2}\big) v\\
= -2(A_1 - A_2, dv) + d^*(A_1 - A_2)v - \big(|A_1|^2 - |A_2|^2\big)v = O(|\tau|)
\end{multline*}
as $\lVert{e^{-\tau x_1} dv}\rVert = O(|\tau|)$ -- by Remark \ref{dv_s} we have $\lVert{dv_2}\rVert_{L^2(M)} = O(|\tau|)$ and by the construction in Theorem \ref{mainconstruction} we have $\lVert{r_s}\rVert_{H^1(M)} = o(|\tau|)$. Therefore, the first line is equal to $O(|\tau|^{\frac{1}{2}})$. We are left to prove the second line of \eqref{bound} is equal to $O(1)$.

Firstly, observe that by a trace inequality, we have $\lVert{r_s}\rVert_{L^2(\partial M)} \lesssim \lVert{r_s}\rVert_{H^1(M)}$; note that in the previous paragraph we had $\lVert{r_s}\rVert_{H^1(M)} = o(|\tau|)$ -- however, we can do better than that. By recalling Remark \ref{tauKconstruction} (with $K = 0$), we may assume that the $H^1$ norm of $r_s$ is bounded uniformly as $\tau \to \infty$ and hence, so is $\lVert{r_s}\rVert_{L^2(\partial M)}$.

Secondly, we want to prove that $\lVert{v_1}\rVert_{L^2(B_\epsilon)} = O(1)$ as $\tau \to \infty$ -- this will be a bit more subtle, since we will crucially use the fact that we are taking the $L^2$ norm over $B_\epsilon$ (and not over $\partial M_+$). Without loss of generality, we assume that $\partial M_\epsilon = \partial M \cap \pi^{-1}(\epsilon)$ is a manifold, where $\pi: \partial M \to \mathbb{R}$ is the projection (follows from Sard's theorem). Thus $B_\epsilon$ is compact manifold with boundary, of dimension $(n-1)$.

Notice that the second projection $\pi_2: \partial M \to M_0$ is a local diffeomorphism on $B_\eta$ for any $\eta > 0$.  So if we pick an arbitrary point $p \in B_\epsilon$ and an open neighbourhood $U$ of $p$ such that $\pi_2|_{U}$ a diffeomorphism, we see that ${\pi_2}_*(dV_{\partial M}) = J_{\pi_2} dV_{g_0}$ by the change of variables formula, where $J_{\pi_2} = \big|\det d \pi_2^{-1}\big|$ is the Jacobian. So by the properties of the integral we see that
\[\int_{U \cap B_\epsilon} |v_1|^2 dV_{\partial M} = \int_{\pi_2(U \cap B_\epsilon)} \big|v_1 \circ \pi_2^{-1}\big|^2 J_{\pi_2} dV_{g_0}\]
Note that $\pi_2^{-1}(x) = (x_1(x), x)$ on $\pi_2(U)$, where $x_1(x)$ is a smooth function, which means that by taking small enough $U$ we have $J_{\pi_2}$ bounded locally. Therefore, by the estimate \eqref{esimatev_s} in the construction of Gaussian Beams and the lines nearby, we locally have
\[\int_{\pi_2(U \cap B_\epsilon)} \big|v_1 \circ \pi_2^{-1}\big|^2 J_{\pi_2} dV_{g_0} = O(1)\]
as $\tau \to \infty$. Now since $B_\epsilon$ compact, we immediately obtain that $\lVert{v_1}\rVert_{L^2(B_\epsilon)} = O(1)$ as $\tau \to \infty$, which proves the claim.

Finally, if we quotient out by $\tau$ and take the limit $\tau \to \infty$ as before, we now have the left hand side going to zero by the estimate, which takes us back to the second step of the proof of Theorem \ref{mainrecovery} -- what follows addresses the issue that $\tilde{A}$ does not have a smooth zero extension.

Firstly, consider smooth extensions $A^\epsilon_1$ and $A^\epsilon_2$ of $A_1$ and $A_2$ respectively, with supports in $M^\epsilon$, which we define as the manifold obtained by taking the union of $M$ and its exterior $\epsilon$-collar in $\mathbb{R} \times M_0$, for some small $\epsilon > 0$. Let us also write $N^\epsilon = M^\epsilon \setminus M$ and $\tilde{A}^\epsilon = A_2^\epsilon - A_1^\epsilon$. We also denote the corresponding CGO solutions
\begin{align*}
u^\epsilon = e^{-(\tau + i\lambda)x_1}c^{\frac{n-2}{4}}(v_1^\epsilon + r_1^\epsilon) \quad \text{and} \quad v^\epsilon = e^{(\tau + i \lambda) x_1}c^{\frac{n-2}{4}}(v_2^\epsilon + r_2^\epsilon)
\end{align*}
to $\Lapl_{A_1^\epsilon} u^\epsilon = 0$ and $\Lapl_{A_2^\epsilon} v^\epsilon = 0$ in $M^\epsilon$. Corresponding to these solutions, we have $\Phi_1^\epsilon$ and $\Phi_2^\epsilon$ that satisfy the following equations:
\begin{align}\label{fiepsilonjna}
\frac{\partial \Phi^\epsilon_1}{\partial z} = \frac{1}{2}(-(A^\epsilon_1)_1 + i (A^\epsilon_1)_t) =: Z_1^\epsilon \quad \text{ and } \quad \frac{\partial \overline{\Phi^\epsilon_2}}{\partial z} = \frac{1}{2}((A^\epsilon_2)_1 - i (A^\epsilon_2)_t) =: Z_2^\epsilon
\end{align}
on $\mathbb{R} \times [0, L]$. More precisely, we have the following expressions given by the Cauchy operator:
\begin{align}\label{fiepsilonsoln}
\Phi_1^\epsilon(\omega) = \frac{1}{2\pi i} \int_\mathbb{C} \frac{Z_1^\epsilon(z)}{\bar{z} - \bar{\omega}} dz \wedge d\bar{z} \quad \text{ and } \quad \Phi_2^\epsilon(\omega) = \frac{1}{2\pi i} \int_\mathbb{C} \frac{\overline{Z_2^\epsilon}(z)}{z - \omega} dz \wedge d\bar{z}
\end{align}
Moreover, we can still solve the equation \eqref{eqnfi}, where we extend $A_1$ and $A_2$ by zero outside $M$ in the distributional sense (we denote them by the same letter) and obtain $\Phi_1, \Phi_2 \in H^1_{loc}(\mathbb{R} \times [0, L])$, satisfying the equations:
\begin{align}\label{fijna2}
\frac{\partial \Phi_1}{\partial z} = \frac{1}{2}(-(A_1)_1 + i (A_1)_t) =: Z_1 \quad \text{ and } \quad \frac{\partial \overline{\Phi_2}}{\partial z} = \frac{1}{2}((A_2)_1 - i (A_2)_t) =: Z_2
\end{align}
Furthermore, $\Phi_1$ and $\Phi_2$ have continuous representatives, which follows from the Dominated convergence theorem (DCT) applied to the Cauchy integral formula in the polar coordinate system at $\omega \in \mathbb{R} \times [0, L]$, as follows (the analogous argument applies to $\Phi_1$):
\begin{align}\label{fisoln}
\Phi_2(\omega) = \frac{1}{2\pi i} \int_\mathbb{C} \frac{\overline{Z_2}(z)}{z - \omega} dz \wedge d\bar{z} = \frac{1}{\pi} \int_0^\infty \int_0^{2 \pi} \overline{Z_2}(\omega + re^{i\theta}) e^{-i\theta} d\theta dr
\end{align}
So if $\omega_k \to \omega$, by the DCT we get that $\Phi_2(\omega_k) \to \Phi_2(\omega)$ and thus $\Phi_2$ is continuous. 


Our next aim is to compute the limit in \eqref{pluggedidentity} as $\tau \to \infty$ and $\epsilon \to 0$ for the solutions $u^\epsilon$ and $v^\epsilon$ instead of $u$ and $v$, respectively and $\tilde{A}^\epsilon$ instead of $\tilde{A}$. This integral splits into an integral over $\mathbb{R} \times M_0$, the limit of which we know and a remainder integral over $N^\epsilon$ of the following type, that we would like to prove is small in the limit as $\epsilon \to 0$:
\begin{align*}
\lim_{\tau \to \infty} \frac{1}{\tau} \int_{N^\epsilon} e^{-2i \lambda x_1} \big\langle{\tilde{A}^\epsilon, (-\tau + i\lambda) v_1^\epsilon \overline{v_2^\epsilon} dx_1 + \overline{v_2^\epsilon} dv_1^\epsilon}\big\rangle dV_{\tilde{g}}
\end{align*}

Firstly, observe that if $S \subset M_0$ is a compact submanifold with boundary and same dimension and $\gamma$ intersects the boundary of $S$ transversely, then
\begin{align*}
\lim_{\tau \to \infty}\int_{\{x_1'\} \times S} v'_1 \overline{v'}_2 \phi dV_{g_0} = \int_{\gamma^{-1}(S)} e^{\Psi_1 + \overline{\Psi}_2}e^{-2 \lambda t} dt
\end{align*}
for $x_1' \in J_0$, where $v'_1$ and $v_2'$ are some general Gaussian beams coming from our construction in Section \ref{sec5.1}, $\Psi_1$ and $\Psi_2$ are complex phases that satisfy the usual transport equations. Moreover, we have a similar formula involving the integrals of $\langle{\alpha, d\overline{v'}_2}\rangle v_1'$ and $\langle{\alpha, dv_1'}\rangle \overline{v'}_2$ for a one form $\alpha$ in the limit $\tau \to \infty$.

Secondly, recall that for almost all $x_1 \in \mathbb{R}$ we have $\partial M \pitchfork \{x_1\} \times M_0$, by applying Sard's theorem to the projection $\pi$; denote the set of such $x_1 \in J_0$ by $T$. This means that $\pi^{-1}(x_1) \cap \partial M$ is a manifold of dimension $n - 2$ for almost all $x_1$ and moreover that $N^\epsilon_{x_1}:= \pi^{-1}(x_1) \cap N^\epsilon$ is a manifold of dimension $n - 1$ with boundary for almost all $x_1$ (and similarly we set $M_{x_1}: = \pi^{-1}(x_1) \cap M$).

Thirdly, we claim that for almost all geodesics $\gamma$ in $M_0$ and for almost all $x_1 \in \mathbb{R}$, we have $\gamma \pitchfork \partial N_{x_1}^\epsilon$, where by $\gamma$ we mean the image of $\gamma$ and we identify subsets of $\{p\} \times M_0$ for some $p \in \mathbb{R}$ with subsets in $M_0$ as appropriate ($\epsilon > 0$ is fixed). To prove this, note first that the geodesics in $M_0$ are parametrised, by the influx boundary manifold $\Gamma := \partial_{+}SM_0$ which has dimension $(2n - 4)$. Furthermore, notice that the set of ``bad" geodesics, i.e. the ones that are tangent at some point to $\partial N_{x_1} ^\epsilon$, is of dimension $(2n - 5)$ (we choose a point and a unit tangent direction). Let us now define (for $x_1 \in T$):
\begin{align*}
\Gamma_{x_1} = \{\text{geodesics $\gamma \in \Gamma$ such that } \gamma \pitchfork \partial N^\epsilon_{x_1}\}
\end{align*}
and by the above dimension counting we have $\Gamma_{x_1}$ is of full measure in $\Gamma$. Let us consider the set
\begin{align*}
    A = \{(x_1, \gamma) \mid x_1 \in T \text{ and }\gamma \in \Gamma_{x_1}\} \subset J_0 \times \Gamma
\end{align*}
Since $\Gamma_{x_1}$ is of full measure in $\Gamma$ and $T$ is of full measure in $J_0$, we have $A$ is of full measure in $J_0 \times \Gamma$, by Fubini's theorem. Furthermore, again by Fubini's theorem applied to the indicator function $\chi_A$ of $A$, we conclude that for almost all $\gamma \in \Gamma$, the set $\{x_1 \mid x_1 \in J_0 \text{ and } \gamma \in \Gamma_{x_1}\}$ is of full measure in $J_0$; let us denote the set of such $\gamma$ by $\Gamma'$. This proves the claim, i.e. $\Gamma'$ is of full measure in $\Gamma$.

Moreover, notice that if we take a countable set of $\epsilon$, say $\epsilon_k \to 0$  for $k \in \mathbb{N}$, then the set of geodesics that tranversely intersect $\partial N^{\epsilon_i}_{x_1}$ for a.a. $x_1 \in J_0$ and all $i$ is of full measure, by taking a countable intersection.

We will also need the following claim: if $\gamma \in \Gamma'$, then we have $\Phi_i^\epsilon \to \Phi_i$ uniformly in $\mathbb{R} \times [0, L]$ for $i = 1, 2$ as $\epsilon \to 0$. This follows from \eqref{fiepsilonsoln} and \eqref{fisoln} in the polar coordinate form (the analogous argument works for $\Phi_1$ and $\Phi_1^\epsilon$): 
\begin{align}\label{fiuniform}
(\Phi_2^\epsilon - \Phi_2)(\omega) = \frac{1}{\pi} \int_0^\infty \int_0^{2\pi} \big(\overline{Z_2^\epsilon} - \overline{Z_2}\big)(\omega + re^{i\theta}) e^{-i\theta} d\theta dr
\end{align}
Notice that the support of $Z_2^\epsilon - Z_2$ lies in the set $S_\epsilon := \tilde{\gamma}^{-1}(N^\epsilon)$, where $\tilde{\gamma}: \mathbb{R} \times [0, L]$ maps $(x_1, t) \mapsto (x_1, \gamma(t))$. So we may write
\begin{align}\label{suppinclusion}
\text{supp}(Z_2^\epsilon - Z_2) \subset S_\epsilon = \bigcup_{x_1 \in J_0} \{x_1\} \times \gamma^{-1}(N^\epsilon_{x_1})
\end{align}
Therefore, if we define $M = \big(\text{supp}_{z, \epsilon}(|Z_2^\epsilon|) + \text{supp}_z(|Z_2|)\big)$, we have the bound
\begin{align}\label{uniformbound}
\big|\big(\Phi_2^\epsilon - \Phi_2\big)(\omega)\big| \leq \frac{M}{\pi} \int_0^\infty \int_0^{2\pi} \chi_{S_\epsilon} d\theta dr \leq 2 r_0 M + \frac{\text{area}(S_\epsilon)}{r_0}
\end{align}
for any $r_0 > 0$, where $\text{area}(S_\epsilon)$ is the $2$-dimensional Lebesgue measure. But by \eqref{suppinclusion}, Fubini and the DCT, we have:
\begin{align*}
\text{area}(S_\epsilon) = \int_{x_1} \int_{\gamma^{-1}(N_{x_1}^\epsilon)} dt dx_1 \to 0
\end{align*}
as $\epsilon \to 0$, since $\gamma \in \Gamma'$. Therefore, by taking $r_0$ small enough and then taking $\epsilon$ small enough, \eqref{uniformbound} gives a small uniform bound, which proves the claim.

Back to the main proof, for $\gamma \in \Gamma'$ we have
\begin{align*}
    \lim_{\tau \to \infty} \int_{N^\epsilon} \tilde{A}_1^\epsilon v_1^\epsilon \overline{v_2^\epsilon} dV_{\tilde{g}} = \lim_{\tau \to \infty} \int_{x_1} \int_{N_{x_1}^\epsilon} \tilde{A}_1^\epsilon v_1^\epsilon \overline{v_2^\epsilon} dV_{g_0} dx_1 = \int_{x_1} \int_{\gamma^{-1}(N^\epsilon_{x_1})} e^{\Phi_1^\epsilon + \overline{\Phi_2^\epsilon}} e^{-2\lambda t} \tilde{A}_1^\epsilon dt dx_1
\end{align*}
by Fubini, the first observation above and the Dominated convergence theorem. We may apply the DCT as $\lVert{v_i^\epsilon}\rVert_{L^2(\{x_1\} \times M_0)} = O(1)$ as $\tau \to \infty$ uniformly in $x_1 \in J_0$, for $i = 1, 2$. Furthermore, if we take $\epsilon = \epsilon_k$ with $\epsilon_k \to 0$ (e.g. $\epsilon_k = \frac{1}{k}$ for large enough $k$), we see that by the DCT (we drop the $k$ to lighten the notation):
\begin{align*}
    \lim_{\epsilon \to 0} \int_{x_1} \int_{\gamma^{-1}(N^\epsilon_{x_1})} e^{\Phi_1^\epsilon + \overline{\Phi_2^\epsilon}} e^{-2\lambda t} \tilde{A}_1^\epsilon dt dx_1 = 0
\end{align*}
since the length of $\gamma^{-1}(N^\epsilon_{x_1}) = o(1)$ as $\epsilon \to 0$, for a.a. $x_1 \in J_0$ (as $\gamma \in \Gamma'$) and the integrand is uniformly bounded. Analogously we obtain, by using Fubini, first observation and the DCT
\begin{multline*}
    \lim_{\tau \to \infty} \frac{1}{\tau} \int_{N^\epsilon} e^{-2i\lambda x_1} \overline{v_2^\epsilon} \langle{\tilde{A}^\epsilon, dv^\epsilon_1}\rangle_{\tilde{g}} dV_{\tilde{g}} = \lim_{\tau \to \infty} \frac{1}{\tau} \int_{x_1} \int_{N^\epsilon_{x_1}} e^{-2i\lambda x_1} \overline{v_2^\epsilon} \langle{\tilde{A}^\epsilon, dv^\epsilon_1}\rangle_{\tilde{g}} dV_{\tilde{g}}\\
    = i\int_{x_1} \int_{\gamma^{-1}(N_{x_1}^\epsilon)} e^{-2i\lambda x_1} \tilde{A}^\epsilon_t e^{\Phi_1^\epsilon + \overline{\Phi_2^\epsilon}} e^{-2\lambda t} dt dx_1
\end{multline*}
Note again that we may use the DCT as $\lVert{dv_i^\epsilon}\rVert_{L^2(\{x_1\} \times M_0)} = O(|\tau|)$ as $\tau \to \infty$ uniformly in $x_1 \in J_0$, for $i = 1, 2$. If we now take $\epsilon_k \to 0$, for the same reasons as before, we get
\[\lim_{\epsilon \to 0} i\int_{x_1} \int_{\gamma^{-1}(N_{x_1}^\epsilon)} e^{-2i\lambda x_1} \tilde{A}^\epsilon_t e^{\Phi_1^\epsilon + \overline{\Phi_2^\epsilon}} e^{-2\lambda t} dt dx_1 = 0\]

Going back to the identity \eqref{partialidentitystart}, taking $\tau \to \infty$ and combining with the two previous limits, we get:
\begin{align*}
    \int_{-\infty}^\infty \int_0^L e^{\Phi_1^\epsilon + \overline{\Phi_2^\epsilon}}e^{-2\lambda t} e^{-2i\lambda x_1} (\tilde{A}_1^\epsilon - i \tilde{A}_t^\epsilon) dtdx_1 = o_{\epsilon}(1)
\end{align*}
where $o_\epsilon(1)$ means $o(1)$ as $\epsilon \to 0$. As before, by using Stokes' theorem and integrating by parts over a simply connected $\Omega \subset \mathbb{R} \times [0, L]$ that contains the supports of $Z_i^\epsilon$ for $i = 1, 2$, together with inserting an anti-holomorphic function $h$ (the estimates above go through with $h e^{\Phi_1^\epsilon}$ instead of $e^{\Phi_1^\epsilon}$, as $h$ is independent of $\epsilon$), we obtain
\begin{align*}
\int_{\partial \Omega} e^{-2i \lambda (x_1 - it)} h e^{\Phi_1^\epsilon + \overline{\Phi_2^\epsilon}} d\bar{z} = o_\epsilon(1)
\end{align*}
and so by taking the limit $\epsilon \to 0$
\begin{align*}
\int_{\partial \Omega} e^{-2i \lambda (x_1 - it)} h e^{\Phi_1 + \overline{\Phi_2}} d\bar{z} = 0
\end{align*}
Now we repeat the argument of taking the logarithm from the proof of Theorem \ref{mainrecovery} (c.f. \eqref{logarithmstep0}) to get that
\begin{align*}
    \int_{\partial \Omega} e^{-2i\lambda (x_1 - it)} (\Phi_1 + \overline{\Phi_2}) d\bar{z} = 0
\end{align*}
So by going back to the $\epsilon$ limit and integrating by parts, we get (c.f. \eqref{logarithmstep})
\begin{align*}
    \int_\Omega e^{2i \lambda (x_1 + it)} (\tilde{A}_1^\epsilon + i\tilde{A}_t^\epsilon) dz d\bar{z} = o_\epsilon(1)
\end{align*}
Finally, by the Dominated convergence theorem we obtain
\begin{align*}
    \int_0^L e^{-\lambda r}(f + i\alpha(\dot{\gamma})) dr = 0
\end{align*}
with rescaling $\lambda$ and where $f$ and $\alpha$ are defined by \eqref{notation1} and \eqref{notation2} as before, for geodesics $\gamma$ in $\Gamma'$ (which is of full measure).

We claim that $f$ and $\alpha$ are in fact smooth. To show this, recall that the projection $\pi_2: \partial M \setminus \Gamma \to M_0$ is a local diffeomorphism by definition of $\Gamma$ -- therefore $\pi_2^{-1}(x')$ is a finite set of points for each $x'$ that we denote by $b_1(x') < \dotso < b_k(x')$ locally. Furthermore we set $a_1(x') = -N$, and $a_i(x') = b_{i-1}(x') + \epsilon$ for $i \geq 2$ where $\epsilon' > 0$ small enough so that $(b_i(x'), a_{i + 1}(x')] \times \{x'\} \subset M^c$ for $k - 1 \geq i \geq 1$, where $M^c$ is the complement of $M$. Therefore
\begin{align*}
    f(\lambda, x') = \sum_{i = 1}^k \int_{a_i(x')}^{b_i(x')} e^{i\lambda x_1} \tilde{A}_1(x_1, x') dx_1
\end{align*}
shows $f$ is smooth and similarly, so is $\alpha$. Same as before (formally), we get $\alpha = id'\beta$ and $f = \lambda \beta$ for some smooth $\beta$. By a computation and using $d'\alpha = 0$, we get
\begin{align*}
    \mathcal{F}\big(\partial_j \tilde{A}_k - \partial_k \tilde{A}_j\big)(\lambda, x') = \sum_{l = 1}^k e^{i \lambda b_l(x')} \Big(\frac{\partial b_l}{\partial x^k}(x') \tilde{A}_j(b_l(x'), x') - \frac{\partial b_l}{\partial x^j}(x') \tilde{A}_k(b_l(x'), x')\Big)
\end{align*}
for $j, k \geq 2$ and all $x'$ in a small open set and all $\lambda$. Note that the right hand side for fixed $x'$ is in $L^2 (\mathbb{R})$ if and only if the coefficients are zero; this implies that $\partial_j \tilde{A}_k = \partial_k \tilde{A}_j$ for a.a. $x_1$ and so $d'\tilde{A} = 0$ in $M$ by continuity.

Finally, by another computation and using $d'f + i \lambda \alpha = 0$, we have 
\begin{align*}
    \mathcal{F}\big(\partial_j \tilde{A}_1 - \partial_1 \tilde{A}_j\big)(\lambda, x') = -\sum_{l = 1}^k e^{i \lambda b_l(x')} \Big(\frac{\partial b_l}{\partial x^j}(x') \tilde{A}_1(b_l(x'), x') + \tilde{A}_j(b_l(x'), x')\Big)
\end{align*}
and we similarly conclude $\partial_j \tilde{A}_1 = \partial_1 \tilde{A}_j$ in $M$. Therefore, we globally have $d\tilde{A} = 0$.
\end{proof}
\begin{rem}\rm
In the case of a topologically non-trivial line bundle $E$, we can follow the lines of the proofs of Theorems \ref{main}, \ref{mainrecovery} and \ref{therem} to get that $d(A_1 - A_2) = 0$ (note that $\text{End E} = E \otimes E^*$ in this case is a trivial bundle, since we have the identity section, so $A_1 - A_2$ is a proper $1$-form on $M$). Namely, what one can do is to take the partition of unity used in the construction of the CGOs subordinate to $V_i$s and $W_j$s (see the equations \eqref{pou_argument} and the paragraph below it); now in each of these charts we may trivialise the bundle and by essentially re-running the last part of Theorem \ref{main} and Remark \ref{mainvector} dealing with the concentration properties, we get the limit of each individual term in the partition of unity; summing over again, we obtain the desired limit -- the equation \eqref{desired_limit}. Then the rest of the proof of Theorem \ref{mainrecovery} applies and we have a similar situation with Theorem \ref{therem}.
\end{rem}

\begin{rem}\rm
We have proved that Cauchy data uniquely determines $dA$, however ideally we would like to determine the connection up to gauge equivalence, which is finer than just determining $dA$. On simply-connected manifolds, we would have $A_2 - A_1 = dp = e^{-p} d(e^p)$ for some $p$ that we may arrange to vanish on one component of the boundary -- assuming the potentials are equal (or zero), the argument in Proposition \ref{Gextension} would imply that $e^p \equiv 1$ on the whole of $\partial M$. If additionally $\partial M$ is connected, we may recover a scalar potential, too (once we gauge transform one connection to the other, this would follow from the proof of Theorem 1.2 from \cite{CTA}). However, we can make the case without the potentials even on non simply-connected manifolds; the proof is contained in the next section and the idea is to consider $A_2 - A_1$ as a flat connection and to use a unique continuation principle.
\end{rem}

\section{Holonomy and Cauchy data}\label{holsec}

Given a manifold $M$ and a Hermitian vector bundle $E$ on it, equipped with a unitary connection $\nabla$, we can define the parallel transport along piecewise smooth curves in $M$, which is an isometry on the fibers. In particular, when this curve is a loop at a point $p$, we end up with an isometry of the fibre $E_p$, i.e. $P_{\gamma}: E_p \to E_p$ which preserves the Hermitian inner product. When $E = M \times \mathbb{C}^m$ with the standard structure, $P_\gamma$ is a unitary matrix. The \textit{holonomy group} at $p$ is defined as:
\begin{align*}
H_p(\nabla) = \{P_{\gamma}: E_p \to E_p \text{ }|\text{ } \gamma\text{ a closed loop at } p\}
\end{align*}
This naturally defines a group and moreover satisfies $P_{\gamma \cdot \gamma'} = P_\gamma \cdot P_{\gamma'}$ under path concatenation. We can also define the \textit{restricted} holonomy group as the group $H^0_p(\nabla)$ consisting of parallel transports along contractible loops -- which yields a surjective homomorphism $\rho^{\nabla}_p:\pi_1(M, p) \to H_p(\nabla)/H^0_p(\nabla)$ called the \textit{holonomy representation}. On a fixed connected manifold, these groups for varying points are all isomorphic upon conjugation by an appropriate element.

There is a close connection between the holonomy and the curvature. Namely, one can say that ``the curvature is an infinitesimal of the deviation of the holonomy"; more concretely, if we are given a parallelogram in a coordinate chart determined by two coordinate axes, say $x_1$ and $x_2$, then $F_{12} u = - \frac{\partial^2}{\partial s \partial t} T_{s, t}u$, where $F_{12}$ is the corresponding component of the curvature tensor and $T_{s, t}$ is the parallel transport along parallelogram at vertices $(0, 0)$, $(s, 0)$, $(s, t)$, $(0, t)$. For our purposes, we will need the fact that homotopic paths have the same holonomy if the curvature is zero.
\begin{lemma}
If the curvature $F_\nabla$ of $\nabla$ is zero, then $H^{0}_p(M) = 0$ for all $p \in M$.
\end{lemma}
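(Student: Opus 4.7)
The plan is, given a piecewise smooth loop $\gamma:[0,1]\to M$ null-homotopic at $p$, to choose a smooth homotopy $H:[0,1]^2\to M$ with $H(0,t)=\gamma(t)$, $H(1,t)=p$, and $H(s,0)=H(s,1)=p$, and then to use flatness of the pullback connection on the square to force $P_\gamma=\mathrm{Id}$.

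First I would pull the data back: consider the bundle $H^*E$ over $[0,1]^2$ equipped with the pullback connection $H^*\nabla$, whose curvature is $H^*F_\nabla=0$. The square is simply connected, so the flat bundle $H^*E$ admits a smooth global parallel frame $\{e_i(s,t)\}_{i=1}^m$. One constructs it by picking any unitary frame over $(0,0)$, parallel-transporting it along the $s$-axis, and then parallel-transporting from each point of the $s$-axis along the corresponding vertical line. Path-independence of this assignment (and hence smoothness and parallelism) follows from the standard fact that the holonomy around any small rectangle is the path-ordered exponential of the integral of the curvature $2$-form, which here vanishes.

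Next, in this parallel frame, parallel transport of $H^*\nabla$ along any piecewise smooth curve in $[0,1]^2$ is represented by the identity matrix. Pushing forward under $H$, this says parallel transport of $\nabla$ along any loop in $M$ that lifts to the square is the identity on $E_p$. Traversing $\partial[0,1]^2$ counterclockwise starting at $(0,0)$ projects under $H$ to the concatenation of $\gamma$ with three constant paths at $p$ (the images of the top and side edges). The constant pieces contribute the identity, so we conclude $P_\gamma = \mathrm{Id}$, as desired.

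The main technical nuisance is that $H$ need not be an immersion: the top and side edges collapse to the point $p$, so $dH$ is degenerate there. This causes no real obstacle, since the pullback bundle and connection are well defined as abstract smooth data on $[0,1]^2$ regardless of the rank of $dH$, and the parallel transport ODEs only see $H^*\nabla$. Piecewise smoothness of $\gamma$ (rather than smoothness) is handled either by a routine smoothing argument near the corners or by running the same construction on each smooth sub-arc, concatenating the resulting trivial transports. An essentially equivalent formulation is to invoke directly the variation-of-holonomy formula, by which $\partial_s P_{\gamma_s}$ is an integral against $F_\nabla$ when the endpoints of $\gamma_s$ are fixed, so flatness makes $s\mapsto P_{\gamma_s}$ constant from $P_\gamma$ at $s=0$ to $\mathrm{Id}$ at $s=1$.
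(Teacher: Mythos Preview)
Your proof is correct and follows essentially the same approach as the paper: both pull back to the homotopy square and use flatness there to show the holonomy is trivial. The paper phrases this as a direct computation with the curvature identity $\nabla_{\partial_x}\nabla_{\partial_t}V - \nabla_{\partial_t}\nabla_{\partial_x}V = F_\nabla(\partial_x\sigma,\partial_t\sigma)V$ applied to the parallel-transported section $V_{x,t}=T_{x,t}v$, whereas you package the same ODE argument as the existence of a global parallel frame for the flat pullback bundle $H^*E$; your closing remark about the variation-of-holonomy formula is exactly the paper's computation.
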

\begin{proof}
Let $\sigma: I \times I \to M$ be a smooth homotopy between a loop $\gamma$ and the constant loop at $p \in M$, fixing the endpoints. We will make use of the identity:
\begin{align*}
\nabla_{\frac{\partial \sigma}{\partial x}} \nabla_{\frac{\partial \sigma}{\partial y}} V - \nabla_{\frac{\partial \sigma}{\partial y}} \nabla_{\frac{\partial \sigma}{\partial x}} V = F_\nabla\big(\frac{\partial \sigma}{\partial x}, \frac{\partial \sigma}{\partial y}\big) V
\end{align*}
where $V$ is any section. Let us put $V_{x, t} = T_{x, t} v$ for some $v \in E_p$, where $T_{x,t}$ is parallel transport along $\sigma(x, \cdot)$; also $\sigma(0, t) = p$ and $\sigma(1, t) = \gamma(t)$. Then we must have $0 = \nabla_{\frac{\partial \sigma}{\partial x}} \nabla_{\frac{\partial \sigma}{\partial t}} V_{x, t} = \nabla_{\frac{\partial \sigma}{\partial t}} \nabla_{\frac{\partial \sigma}{\partial x}} V_{x, t}$, which implies that $\nabla_{\frac{\partial \sigma}{\partial x}} V_{x, t}$ is parallel along $\sigma(x, \cdot)$ for all $x$. But $V_{x, 0} = v$ and $\sigma(x, 0) = p$ for all $x$ and so we have $\nabla_{\frac{\partial \sigma}{\partial x}} V_{x, 0} = 0$ for all $x$. By uniqueness of solution, we must have $\nabla_{\frac{\partial \sigma}{\partial x}} V_{x, t} \equiv 0$. Therefore, $V_{x, t}$ is parallel along $\sigma(\cdot, t)$ for all $t$. Since we know that $V_{0, 1} = T_{0, 1} v = v$ and $\sigma(x, 1) = p$ for all $x$, we must also have $V_{1, 1} = T_{1, 1} v = v = P_\gamma v$ and thus parallel transport along $\gamma$ is trivial.
\end{proof}

This means that for zero curvature, the holonomy representation is simply a map from $\pi_1$ to the holonomy group. As a warm up, let us point out some details about the construction of the parallel transport matrix. Namely, assume $E = M \times \mathbb{C}^m$ with a unitary connection $A$ has trivial holonomy and fix a point $p \in M$. Consider the matrix obtained by parallel transporting along curves emanating from $p$ and define $F(p') = P_{\gamma(p, p')}$ where $\gamma(p, p')$ is a path between $p$ and $p'$. Since the holonomy is trivial, we have $F$ well defined. Therefore, we have $dF + AF = 0$ for all $(x, v) \in TM$ and also $FF^* = Id$, since $A$ is unitary. Hence $F^{-1} A F + F^{-1} dF = 0$ and so $A$ is equivalent to the trivial connection and moreover the covariant derivative satisfies $F^{-1}(d + A)F = d$. Moreover, if we fix $p \in \partial M$ and assume that $\iota^*_{\Gamma} A = 0$ for a connected open set $\Gamma \subset \partial M$, we will have $F|_{\Gamma} = Id$, so $A$ and the trivial connection on $E$ will be gauge equivalent.

The following lemma is useful because most results on unique continuation for elliptic systems, which we will use in the proof of Theorem \ref{holtheorem}, work with usual normal derivative at the boundary (c.f. Remark \ref{UCP} below) and also for boundary determination results (c.f. \cite{LCW}).

\begin{lemma}\label{lemma}
Let $A$ and $B$ be two unitary connections on a Hermitian vector bundle $E$ over $M$. Consider the tubular neighbourhood $\partial M \times [0, \epsilon)$ of the boundary for some $\epsilon > 0$ and denote the normal distance coordinate (from $\partial M$) by $t$. Then $B$ is gauge equivalent to a unitary connection $B'$ via an automorphism $F$ of $E$ such that $F|_{\partial M} = Id$ and $(B'-A)(\frac{\partial}{\partial t}) = 0$ in the neighbourhood $\partial M \times [0, \delta)$ of the boundary, for some $\delta > 0$.

In particular, if $E = M \times \mathbb{C}^m$ we have gauges $F$ and $G$ for $A$ and $B$ respectively with $F|_{\partial M} = G|_{\partial M} = Id$, such that $A' = F^*A$ and $B' = G^*B$ satisfy $A'(\frac{\partial}{\partial t}) = B'(\frac{\partial}{\partial t}) = 0$ near the boundary.
\end{lemma}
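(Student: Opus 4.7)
The strategy is to construct $F$ in two stages: first solve a linear ODE along the normal direction in a boundary collar, then extend the resulting local automorphism to a global unitary one using the exponential map of $U(m)$.

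\textbf{Stage 1 (the ODE).} The condition $(F^{*}B - A)(\partial_{t}) = 0$ is equivalent, after expanding $F^{*}B = F^{-1}BF + F^{-1}dF$, to the first-order linear equation
\begin{align*}
\partial_{t}F \;=\; F\,A(\partial_{t}) \;-\; B(\partial_{t})\,F
\end{align*}
along the normal fibres of $\partial M \times [0,\epsilon)$. The plan is to solve this fibrewise with initial condition $F|_{t=0} = \mathrm{Id}$ by the standard existence theorem for linear ODEs depending smoothly on parameters, producing a smooth section $F_{0}$ of $\mathrm{Aut}(E)$ on some smaller collar $\partial M \times [0,\epsilon')$.

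\textbf{Stage 2 (unitarity).} Since $A$ and $B$ are unitary, $A(\partial_{t})^{*} = -A(\partial_{t})$ and $B(\partial_{t})^{*} = -B(\partial_{t})$, and a direct computation from the ODE gives
\begin{align*}
\partial_{t}(F_{0}F_{0}^{*}) \;=\; F_{0}F_{0}^{*}\,B(\partial_{t}) \;-\; B(\partial_{t})\,F_{0}F_{0}^{*},
\end{align*}
a linear ODE in $F_{0}F_{0}^{*}$ with initial value $\mathrm{Id}$. As $Y \equiv \mathrm{Id}$ is a solution, uniqueness forces $F_{0}F_{0}^{*} \equiv \mathrm{Id}$, so $F_{0}$ is unitary; the induced connection $B' := F_{0}^{*}B$ is then unitary and by construction $(B'-A)(\partial_{t}) = 0$ on the collar.

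\textbf{Stage 3 (global extension).} To extend $F_{0}$ to a unitary automorphism of $E$ over all of $M$ without disturbing its values near $\partial M$, I would work in the bundle $\mathfrak{u}(E)$ of skew-Hermitian endomorphisms, whose fibrewise exponential $\exp : \mathfrak{u}(E) \to \mathrm{Aut}(E)$ restricts to a diffeomorphism from a neighbourhood of the zero section onto a neighbourhood of the identity section. Because $F_{0}|_{t=0} = \mathrm{Id}$ and $\partial M$ is compact, for some $\delta' > 0$ the section $F_{0}$ lies in this neighbourhood on $\partial M \times [0,\delta']$ and admits a unique smooth logarithm $X_{0}$ with $F_{0} = \exp(X_{0})$. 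Choosing a cutoff $\chi \in C^{\infty}([0,\infty))$ with $\chi \equiv 1$ on $[0,\delta/2]$ and $\chi \equiv 0$ outside $[0,\delta')$ and setting $F := \exp(\chi(t)\,X_{0})$ on the collar, extended by the identity elsewhere, yields a global smooth unitary automorphism of $E$ that coincides with $F_{0}$ on $\partial M \times [0, \delta/2]$; hence $(F^{*}B - A)(\partial_{t}) = 0$ there, as required. The particular statement for $E = M \times \mathbb{C}^{m}$ then follows by applying the first part twice, taking the trivial flat connection $d$ in the role of $A$: once with the given $A$ as the ``second'' connection to produce $F$, and once with $B$ to produce $G$. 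The only step demanding real care is Stage 3, where one must turn a local ODE solution into a global unitary gauge while keeping the prescribed behaviour near $\partial M$; the exponential-of-a-cutoff construction above handles this cleanly by performing the truncation in $\mathfrak{u}(E)$ rather than in $\mathrm{Aut}(E)$ and so preserves unitarity throughout.
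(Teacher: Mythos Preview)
Your proof is correct and follows essentially the same approach as the paper. The paper factors your ODE through two parallel transport equations (solving $\partial_t F + A_t F = 0$ and $\partial_t G + B_t G = 0$, then setting $H = GF^{-1}$ to obtain exactly your equation $\partial_t H = HA_t - B_t H$), whereas you write this equation down directly; the global extension via $\exp(\chi \cdot \log)$ is identical in both arguments.
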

\begin{proof}
Let us denote $B(\frac{\partial}{\partial t})$ by $B_t$. Then consider the following first order systems of differential equations, solving the parallel transport equations:
\begin{align*}
\frac{\partial F}{\partial t}(x', t) + A_t (x', t) F(x', t) = 0 \quad \text{ with } \quad F|_{\partial M} = Id\\
\frac{\partial G}{\partial t}(x', t) + B_t (x', t) G(x', t) = 0 \quad \text{ with } \quad G|_{\partial M} = Id
\end{align*} 
where $F$ and $G$ are $m \times m$ matrices, for $(x', t) \in U \times [0, \epsilon)$ for some coordinate chart $U \subset \partial M$. This has a unique smooth solution in $U \times [0, \delta')$, for some positive $\delta'$ with $\epsilon > \delta'$. Moreover, $F$ and $G$ are unitary, since $B_t$ is skew-Hermitian and if we define $H = GF^{-1}$ we have $B': = H^*B$ with $B'_t = A_t$ by the equations above:
\begin{align*}
    \frac{\partial H}{\partial t} = \frac{\partial G}{\partial t} F^{-1} + G \frac{\partial F^{-1}}{\partial t} = -B_tGF^{-1} + GF^{-1}A_t = HA_t - BH_t
\end{align*}
Moreover we see that $H:E_x \to E_x$ is defined independently of the chart for $x$ with distance less than $\delta'$ to the boundary and $(B'-A)_t = 0$.

Furthermore, there exists a $\delta > 0$ such that $H$ is close to identity in $\partial M \times [0, \delta)$, with $\delta < \delta'$. Then we may take a compactly supported function $\varphi$ on $[0, \delta')$, with $\varphi = 1$ on $[0, \delta)$, and define $\rho$ on $M$ by setting $\rho(x, t) = \varphi(t)$ in $\partial M \times [0, \delta')$ and zero elsewhere. Then we may define the unitary extension $\tilde{H} = e^{\rho \log F}$; clearly $\tilde{H}|_{\partial M \times [0, \delta)} = H$ and the globally defined $B': = \tilde{H}^*B$ satisfies the requirements.
\end{proof}

Let us briefly remark that in the next result, we will use the boundary determination that was mentioned in the introduction. For the scalar case, see Section 8 in \cite{LCW}; the result for $m \geq 2$ will appear in a forthcoming paper by the author. The basic idea is that the full jets of the quantities, such as the connection or the metric, can be restored from the full symbol of the pseudodifferential operator at the boundary determined by the DN map. Let us formulate the main theorem of this section more precisely (c.f. \cite{hol}, Theorem 6.1):

\begin{theorem}\label{holtheorem}
Let $E$ be a Hermitian vector bundle, equipped with two flat, unitary connections $A$ and $B$, and $\Gamma$ an open, non-empty subset of the boundary $\partial M$. Then the restricted DN maps agree, i.e. $\Lambda_A|_\Gamma(f) = \Lambda_B|_\Gamma(f)$ for all $f \in C_0^\infty(\Gamma; E|_\Gamma)$ if and only if $\iota^*_{\Gamma} (A - B) = 0$, the holonomy representations satisfy $\rho^A = \rho^B$ and the parallel transport matrices along any path with endpoints in $\Gamma$, with respect to $A$ and $B$ are equal\footnote{More precisely, given any $x, x' \in \Gamma$ and any path $\gamma$ between them, the parallel transport matrices $F, G: E_x \to E_{x'}$ with respect to $A$ and $B$ (respectively) along $\gamma$ are equal, i.e. $F = G$. This is to address the case when $\Gamma$ is potentially disconnected.}.
\end{theorem}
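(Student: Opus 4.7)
I handle the two implications separately; the forward direction is the substantive one.

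For the reverse direction, given conditions (1)--(3) I would build a global bundle automorphism $H$ of $E$ with $H|_\Gamma = \mathrm{Id}$ that gauges $\nabla_A$ to $\nabla_B$, and derive equality of the partial DN maps from it. Fix $p_0 \in \Gamma$ and, for each $x \in M$ and any path $\gamma$ from $p_0$ to $x$, set $H(x) := P^A_\gamma \circ (P^B_\gamma)^{-1}$. Condition (2) (equality of holonomy representations) makes $H(x)$ independent of $\gamma$, hence a smooth bundle automorphism, and condition (3) forces $H|_\Gamma = \mathrm{Id}$. The short computation carried out just before Lemma \ref{lemma} gives $dH + AH = HB$, equivalent to $\nabla_B = H^{-1}\nabla_A H$. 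For $f \in C_0^\infty(\Gamma; E|_\Gamma)$ with $u$ the $A$-Dirichlet solution, the section $H^{-1}u$ solves $\Lapl_B(H^{-1}u) = 0$ with boundary value $H^{-1}|_{\partial M}f = f$ (since $f$ vanishes outside $\Gamma$ and $H|_\Gamma = \mathrm{Id}$). Then $\iota_\nu \nabla_B(H^{-1}u) = H^{-1}\iota_\nu\nabla_A u$, whose restriction to $\Gamma$ gives $\Lambda_B(f)|_\Gamma = \Lambda_A(f)|_\Gamma$.

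For the forward direction, I would first invoke boundary determination (Section 8 of \cite{LCW} for $m=1$ and the author's forthcoming work for higher rank) to obtain condition (1) together with agreement of the full jets of $A, B$ along $\Gamma$. Applying Lemma \ref{lemma} to each connection, I may assume both are in normal gauge and $A \equiv B$ on a tubular half-neighborhood $\Gamma \times [0,\delta)$. Introduce the connection $\nabla^{\mathrm{End}}$ on $\mathrm{End}(E)$ defined by $\nabla^{\mathrm{End}}G := \nabla_A G - G\nabla_B$; it is flat because $A, B$ are, and the section $\mathrm{Id}$ is $\nabla^{\mathrm{End}}$-parallel on $\Gamma \times [0, \delta)$. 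Since the $\nabla^{\mathrm{End}}$-parallel transport of $\mathrm{Id}$ along a path $\gamma$ is precisely $P^A_\gamma(P^B_\gamma)^{-1}$, conditions (2) and (3) amount to showing $P^A_\gamma = P^B_\gamma$ for every loop at $p \in \Gamma$ and every path $\gamma$ with endpoints in $\Gamma$.

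The key step is a chain-and-Holmgren argument. Cover $\gamma([0,1])$ by finitely many simply connected opens $U_0, \ldots, U_N \subset M$ with $U_0, U_N \subset \Gamma \times [0, \delta)$ and consecutive overlaps nonempty and connected, and on each $U_j$ choose parallel frames $F^A_j, F^B_j$ trivialising $\nabla_A, \nabla_B$, normalised so that $F^A_0 = F^B_0 = \mathrm{Id}$ at $p_0$ and $F^A_j(q_j) = F^A_{j-1}(q_j)$, $F^B_j(q_j) = F^B_{j-1}(q_j)$ at reference points $q_j \in U_{j-1}\cap U_j$. In these frames the Dirichlet solutions $u, v$ with common data $f \in C_0^\infty(\Gamma)$ become vector-valued harmonic functions $\tilde u^{(j)}, \tilde v^{(j)}$ on each $U_j$. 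On $U_0$, the hypothesis $\Lambda_A(f)|_\Gamma = \Lambda_B(f)|_\Gamma$ combined with $F^A_0 = F^B_0$ near $\Gamma$ and the normal-gauge identity $\partial_\nu F^A_0 = \partial_\nu F^B_0 = 0$ translates into equality of the full Cauchy data of $\tilde u^{(0)}$ and $\tilde v^{(0)}$ on $\Gamma \cap U_0$, whence Holmgren's theorem applied componentwise yields $\tilde u^{(0)} \equiv \tilde v^{(0)}$ throughout $U_0$. Two $A$-parallel frames on a connected open differ by a constant, which the chosen normalisation renders trivial, so $\tilde u^{(j-1)} = \tilde u^{(j)}$ on each overlap $U_{j-1}\cap U_j$, and unique continuation inside each $U_j$ then propagates the equality through the chain. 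At $\gamma(1) \in U_N \cap \Gamma$, varying $f$ so that $f(\gamma(1))$ spans $E_{\gamma(1)}$ forces $F^A_N(\gamma(1)) = F^B_N(\gamma(1))$, and these frames are by construction the honest parallel transports $P^A_\gamma$ and $P^B_\gamma$ along a path homotopic to $\gamma$; repeating the construction with $\gamma$ a loop at $p \in \Gamma$ likewise yields $\rho^A = \rho^B$.

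The hardest part will be threading the chain so that the composition of constant transitions between consecutive flat frames genuinely recovers $P^A_\gamma$ and $P^B_\gamma$ along a path in the correct homotopy class, and verifying that $f(\gamma(1))$ can be made to exhaust $E_{\gamma(1)}$; both are essentially bookkeeping, the second amounting to the freedom in choosing $f \in C_0^\infty(\Gamma; E|_\Gamma)$ with prescribed value at $\gamma(1)$.
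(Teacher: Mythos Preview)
Your argument is correct and tracks the paper's closely, with one genuine variation in the forward direction worth noting.

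For the reverse direction you do exactly what the paper does: build a global parallel section of $\mathrm{End}(E)$ for the induced connection $\hat A(R)=AR-RB$ (the paper uses the opposite convention $BR-RA$, which is immaterial), check it is identity on $\Gamma$, and read off the gauge equivalence.

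For the forward direction the paper proceeds slightly differently. After boundary determination, it homotopes the path to an \emph{embedded} curve $\gamma_1$ from $p_1$ to $p_2$ in $\Gamma$ through the interior, takes a single simply-connected tubular neighbourhood $\mathcal O$ of $\gamma_1$, constructs the gauge $F$ between $A$ and $B$ on all of $\mathcal O$ with $F|_{U_1}=\mathrm{Id}$, and then applies unique continuation for the \emph{system} $\Lapl_B$ once on $\mathcal O$ to conclude $Fu_1\equiv u_2$, hence $F(p_2)=\mathrm{Id}$. Your chain-of-charts argument instead trivialises $A$ and $B$ separately by parallel frames on each $U_j$, reducing both solutions to ordinary harmonic vectors and invoking only scalar unique continuation (from Cauchy data on $\Gamma\cap U_0$, then interior UCP on successive overlaps). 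This buys you two things: you only need UCP for $\Delta$ rather than for an elliptic system with lower-order matrix coefficients (the paper has to cite \cite{pat2015UCP} or \cite{tataruUCP} for this, see Remark~\ref{UCP}), and you do not need $\gamma$ to be embedded, so the dimension-two workaround in Remark~\ref{embsurface} becomes unnecessary. The paper's approach is in turn cleaner to write down, since a single gauge on a single tube replaces the bookkeeping of matching frames at each $q_j$ and verifying the concatenated path lies in the right homotopy class. Your own closing remarks identify precisely these bookkeeping points; they are routine once the $U_j$ are chosen to follow $\gamma$ in order, since each $U_j$ is simply connected and contains a segment of $\gamma$.

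One small point: the step ``$A\equiv B$ on $\Gamma\times[0,\delta)$'' uses flatness, not just jet agreement. In normal gauge the $dt\wedge dx'$ component of curvature is $\partial_t A_{x'}$, so flatness forces $A_{x'}$ to be $t$-independent and hence determined by $\iota_\Gamma^*A$; this is what makes your claim true and should be said explicitly.
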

\begin{proof}
Let us firstly assume $\Lambda_A = \Lambda_B$ on $C_0^\infty(\Gamma; E|_\Gamma)$. We know this implies by boundary determination that $\iota^*_{\Gamma} (B - A) = 0$. Consider $p_1 \in \Gamma$ and a loop $\gamma$ starting at $p_1$. By standard differential topology, we can always homotopically perturb the curve such that we end up with two pieces of it: $\gamma_1: [0, 1] \to M$ starting at $p_1$ and ending at $p_2 \neq p_1 \in \Gamma$ such that $\gamma(0, 1) \subset \text{int }M$; and $\gamma_2: [1, 2] \to M$ starting at $p_2$ and ending at $p_1$ and $\text{Image}(\gamma_2) \subset \Gamma$. We moreover ask that $\gamma_1$ and $\gamma_2$ are embedded curves\footnote{We can always do this for curves in dimension $n \geq 3$ by using a version of the weak Whitney theorem to approximate; then we apply a result which says when we are close to a curve uniformly, we are homotopic to it -- for the case $n = 2$ see Remark \ref{embsurface}.}. In order to show that the holonomies are equal, it suffices to show the parallel transports along $\gamma_1$ are equal, as $\iota^*_{\Gamma} (B-A) = 0$.

We consider a tubular neighbourhood of $\gamma_1$; every such is of form $\mathcal{O} = \{p \in M^\circ \mid \text{dist}(p, \gamma_1) < \epsilon\} \cong (0, 1) \times B_{\epsilon}(0)$, where $B_{\epsilon}(0)$ is an $(n - 1)$-dimensional ball (every vector bundle over a contractible space is trivial). Therefore, we know $\mathcal{O}$ is simply connected and therefore has trivial holonomy $H_{p_1}(\mathcal{O}, B|_{\mathcal{O}}) = \{0\}$; here we also used that $B$ is flat and similarly for $A$. We consider $\epsilon > 0$ such that $\text{dist}(p_1, p_2) > 2\epsilon$, so that we have a cylindrical neighbourhood with disjoint ends. Denote $U_1 = \{p \in \Gamma : \text{dist}(p, p_1) < \epsilon\}$.

Now since both connections are flat and $\mathcal{O}$ is simply-connected, we get a unitary isomorphism $F$ between them: $F$ is obtained by taking parallel transport matrices from $p$ of both connections and composing them in a suitable way. We also have $F|_{U_1} = Id$, as $\iota^*_{\Gamma}(B - A) = 0$. Now we apply the hypothesis on the DN maps -- let $u_1$ and $u_2$ solve $\Lapl_A u_1 = \Lapl_B u_2 = 0$ with same boundary data $u_1|_{\Gamma} = u_2|_{\Gamma} = f$, so that $\nabla_\nu^A(u_1)|_{\Gamma} = \nabla_\nu^B(u_2)|_{\Gamma}$; here $\Lapl_A = \nabla_A^* \nabla_A$ and $\Lapl_B = \nabla_B^* \nabla_B$. Define $u := Fu_1$ in $\mathcal{O}$; we want to prove that $u = u_2$ on $\mathcal{O}$.

Since $F|_{U_1} = Id$, we have $u|_{U_1} = u_1|_{U_1} = u_2|_{U_1} = f|_{U_1}$. Also, using the introduction to this section and the definition of $F$, we have $F^*\nabla_B F = \nabla_A$ in $\mathcal{O}$ ($F$ is unitary). Therefore, we must have:
\begin{align}\label{conj_auto}
\Lapl_A = F^* \nabla^*_B F F^* \nabla_B F = F^* \Lapl_B F
\end{align}
which implies that $\Lapl_B u = 0$ in $\mathcal{O}$. Moreover, we have $\nabla^B_{\nu} u = \nabla^B_{\nu} u_2$ on $U_1$:
\begin{align*}
\nabla^{B}_{\nu} u &= (dF u_1 + F du_1)(\nu) + B(\nu)u = (FA(\nu)-B(\nu) F)u_1 + B(\nu)u + F \partial_{\nu} u_1 \\
&= \partial_{\nu} u_1 + A(\nu) u_1= \nabla^A_{\nu} u_1 = \nabla^B_{\nu} u_2
\end{align*}
Consequently, we have:
\begin{gather*}
\Lapl_B (u - u_2) = 0,\quad (u - u_2)|_{U_1} = 0 \quad \text{and} \quad \nabla^{B}_{\nu}|_{U_1} (u - u_2) = 0
\end{gather*}
so by a result concerning the \textit{unique continuation} properties of elliptic systems of equations (see Remark \ref{UCP} below), we must have $u \equiv u_2$ in $\mathcal{O}$; hence we must also have equality at $p_2$ by letting $p \in \mathcal{O}$ converge to $p_2$, i.e. $F(p_2) f(p_2) = f(p_2)$. Here $f$ is smooth and free to choose and therefore, we must have $F(p_2) = Id$. This concludes the proof that the holonomies are equal.

The same proof as above shows that given any $p_1, p_2 \in \Gamma$ and a path $\gamma$ between them, the parallel transport matrices along $\gamma$ of $A$ and $B$ agree, i.e. in the above notation we have $F(p_2) = Id$.

Conversely, to show that $A$ and $B$ have the same restricted DN maps under the given assumptions, just follow the  paragraph before the theorem ($B=0$ case); however, note that since we do not know that the holonomy is trivial, parallel transport from a point might not be well-defined, so we have to do something else. The idea is to provide a global horizontal section of the endomorphism bundle that is identity at the boundary and relate this with holonomy. 

Induce the standard unitary connection on the $\text{End} E$ bundle by $\nabla^{\text{End}} u = \nabla_B u - u \nabla_A$; one can easily check this new connection to be flat, as $A$ and $B$ are. Note that in a local trivialisation this is just $\hat{A}(R) = BR - RA$, where $\hat{A}$ is the new connection matrix and $R$ is a matrix. We would like to construct a unitary automorphism $U$ of $E$, such that $U|_{\Gamma} = Id$ and $U^*\nabla_A U = \nabla_B$. We do this as follows.

Fix $p_1 \in \Gamma$ as before and take a loop at $p_1$, homotope it as before and assume we are working in the tubular neighborhood $\mathcal{O}$. Then $A$ and $B$ are equivalent to a trivial connection over $\mathcal{O}$; take the parallel transport matrices $F$ and $G$ such that $dF + AF = dG + BG = 0$ in $\mathcal{O}$. Then one checks for $H = GF^{-1}$:
\begin{align*}
dH = dG F^{-1} + G dF^{-1} = -BGF^{-1} + GF^{-1}A = HA - BH
\end{align*}
One also sees that $H|_{U_1} = Id$ as $\iota^*_{\Gamma} (A - B) = 0$; also, as $\rho_A = \rho_B$ and the parallel transport along paths in $\Gamma$ is the same for $A$ and $B$, we have that $H|_{U_2} = Id$, too. Now as $H(\gamma_1(t))$ is parallel transport with respect to $\hat{A}$, we get the parallel transport of $Id$ along $\gamma_1$ at $p_2$ is $Id$; therefore parallel transport of $Id$ along $\gamma$ is trivial. So we may define $U(x)$ to be parallel transport with respect to $\hat{A}$ of $Id$ from $p_1$ to $x$, for every $x \in M$; the fact we get identity when we parallel transport between any two components at the boundary then gives $U|_\Gamma = Id$, which concludes the proof.
\end{proof}

Note that the proof above does not generalise if we add arbitrary potentials, since the local gauge isomorphism between two connections has no a priori reason to intertwine the potentials (see \eqref{conj_auto}). However, it generalises in the case $m = 1$ and $Q_A = Q_B$, since the group action is abelian in that case.



\begin{rem}\rm\label{linebundle}
Moreover, in the case of line bundles, it is true that for any two connections $A_1$ and $A_2$ for which we know $d(A_1 - A_2) = 0$: $\Lambda_{A_1}|_{\Gamma} = \Lambda_{A_2}|_{\Gamma}$ if and only if $\iota_{\Gamma}^*(A_1 - A_2) = 0$, the holonomy representation of $A_1 - A_2$ (on $M \times \mathbb{C}$) is trivial and the parallel transport maps with respect to $A_1 - A_2$ between boundary components in $\Gamma$ are equal to the identity. This can be easily seen from the above proof.
\end{rem}

\begin{rem}\rm\label{UCP}
The unique continuation result we are using follows from Theorem 2.3 in \cite{pat2015UCP}, which considers the case of the wave equation (covers our setting if we let $u$ independent of $t$) with the covariant normal derivative at the boundary and so solves our problem. However, it is not ideal since it gives more than we need. More adequate are techniques in Corollary 3.4 and Theorem 3.2 in \cite{tataruUCP} (although they do not use the covariant derivative), since for an elliptic operator, any smooth surface is pseudoconvex. See also Appendix B in \cite{mikkoUCP}.
\end{rem}

\begin{rem}\rm\label{embsurface}
In the case of surfaces, we need to be careful when approximating curves by embeddings -- we do not have enough space to get rid of possible self-intersections. However, there is a way around this by considering just the class of simple curves, by which we can represent generators of $\pi_1$ (see \cite{hol}, Section 6, for details). Furthermore, in \cite{hol} the Conjecture \ref{conjecture1} for Riemann surfaces and line bundles is proved, but with the extra bit of a potential added to the connection Laplacian (so the claim is more general in that case). There, the authors prove the identification of a potential \textit{before} identification of a connection (see the comment after the proof of Theorem \ref{holtheorem}). In our recovery Theorem \ref{mainrecovery}, we first prove the identification of a connection.
\end{rem}

Now we are in a good shape to prove the main theorem: all ingredients are ready. Theorems \ref{mainrecovery} and \ref{therem} almost finish the proof, however Theorem \ref{holtheorem} provides us with the necessary gauge.

\begin{proof}[Proof of Theorem \ref{maintheorem}]\label{mainproof}
Recall that we have $d(A_1 - A_2) = 0$ from Theorem \ref{mainrecovery} for full data and from Theorem \ref{therem} for partial data. By Remark \ref{linebundle}, we immediately get our gauge in both cases. This finishes the proof.
\end{proof}

\begin{thebibliography}{aa}
\small
\bibitem{2D}
P. Albin, C. Guillarmou, L. Tzou, G. Uhlmann, \emph{Inverse boundary problems for systems in two dimensions}, Ann. Henri Poincar\'e {\bf 14} (2013), 1551--1571.


\bibitem{aubin}
T. Aubin, Some nonlinear problems in {R}iemannian geometry, \emph{Springer Monographs in Mathematics}, Springer-Verlag, Berlin, 1998.

\bibitem{bar} 
C. B{\"a}r, \emph{Zero sets of solutions to semilinear elliptic systems of first order}, Invent. Math. {\bf 138} (1999), 183--202.

\bibitem{BU02}
A. L. Bukhgeim, G. Uhlmann, \emph{Recovering a potential from partial Cauchy data}, Comm. Partial Differential Equations {\bf 27} (2002), no. 3-4, 653--668.

\bibitem{FC}
F. J. Chung, \emph{A partial data result for the magnetic Schr\"{o}dinger inverse problem}, Anal. PDE {\bf 7} (2014), 117--157.

\bibitem{chung}
F. J. Chung, M. Salo, L. Tzou, \emph{Partial data inverse problems for the {H}odge {L}aplacian}, Anal. PDE (to appear), arXiv:1310.4616.



\bibitem{DS99}
M. Dimassi, J. Sjöstrand, \emph{Spectral asymptotics in the semi-classical limit} (English summary), London Mathematical Society Lecture Note Series {\bf 268}, Cambridge University Press, Cambridge (1999), xii+227 pp.

\bibitem{DSFlecturenotes}
D. Dos Santos Ferreira, \emph{Microlocal analysis and inverse problems}, lecture notes (2011), \href{http://www.uam.es/gruposinv/inversos/WEBpage/dossantos.html}{available online}.

\bibitem{LCW}
D. Dos Santos Ferreira, C. Kenig, M. Salo, G. Uhlmann, \emph{Limiting Carleman weights and anisotropic inverse problems}, Invent. Math. {\bf 178} (2009), 119--171.

\bibitem{MagU}
D. Dos Santos Ferreira, C. Kenig, J. Sj\"{o}strand, G. Uhlmann, \emph{Determining a magnetic Schr\"{o}dinger operator from partial Cauchy data}, Comm. Math. Phys. {\bf 271} (2007), 467--488.

\bibitem{CTA}
D. Dos Santos Ferreira, Y. Kurylev, M. Lassas, M. Salo, \emph{The Calder\'{o}n problem in transversally anisotropic geometries}, J. Eur. Math. Soc. (to appear), arXiv:1305.1273.

\bibitem{tataruUCP}
M. Eller, V. Isakov, G. Nakamura, D. Tataru, \emph{Uniqueness and stability in the {C}auchy problem for {M}axwell and elasticity systems}, in: Nonlinear partial differential equations and their applications, {C}oll\`ege de {F}rance {S}eminar, {V}ol. {XIV}, in: Stud. Math. Appl. 31, North-Holland, Amsterdam, 2002, pp. 329-–349.

\bibitem{Eskin}
G. Eskin, \emph{Global uniqueness in the inverse scattering problem for the Schrödinger operator with external Yang-Mills potentials}, Comm. Math. Phys. {\bf 222} (2001), 503--531. 

\bibitem{CRsystems}
G. Eskin, J. Ralston, \emph{On the inverse boundary value problem for linear isotropic elasticity and Cauchy-Riemann systems}, in Inverse problems and spectral theory, Contemp. Math. {\bf 348}, Amer. Math. Soc., Providence, RI, 2004, pp. 53--69.

\bibitem{forster}
O. Forster, Lectures on Riemann surfaces, \emph{Graduate Texts in Mathematics {\bf 81}}, Springer-Verlag, New York, 1991.


\bibitem{colin_lens} 
C. Guillarmou, \emph{Lens rigidity for manifolds with hyperbolic trapped set}, J. Amer. Math. Soc. (to appear), arXiv:1412.1760.

\bibitem{xray}
C. Guillarmou, G. Paternain, M. Salo, G. Uhlmann, \emph{The X-ray transform for connections in negative curvature}, Comm. Math. Phys. {\bf 343} (2016), 83--127.

\bibitem{hol}
C. Guillarmou, L. Tzou, \emph{Identification of a connection from Cauchy data on a Riemann surface with boundary}, Geom. Funct. Anal. {\bf 21} (2011), 393--418.



\bibitem{Hass}
J. Hass, \emph{Metrics on manifolds with convex or concave boundary}, Geometric topology (Haifa, 1992), 41--46, Contemp. Math., {\bf 164}, Amer. Math. Soc., Providence, RI, 1994. 

\bibitem{mikkoUCP}
C. E. Kenig, M. Salo, G. Uhlmann, \emph{Inverse problems for the anisotropic Maxwell equations}, Duke Math. J. {\bf 157} (2011), 369--419.

\bibitem{LCW1}
C. E. Kenig, J. Sj\"{o}strand, G. Uhlmann, \emph{The Calder\'{o}n problem with partial data}, Ann. of Math. (2) {\bf 165} (2007), 567--591.

\bibitem{kobayashi_cvb}
S. Kobayashi, Differential geometry of complex vector bundles, \emph{Publications of the Mathematical Society of Japan {\bf 15}}, Princeton University Press, Princeton, NJ; Iwanami Shoten, Tokyo, 1987. 

\bibitem{pat2015UCP}
Y. Kurylev, L. Oksanen, G. P. Paternain, \emph{Inverse problems for the connection {L}aplacian}, preprint (2015), arXiv:1509.02645.


\bibitem{lee_uhlmann}
J. M. Lee, G. Uhlmann, \emph{Determining anisotropic real-analytic conductivities by boundary measurements}, Comm. Pure Appl. Math. {\bf 42} (1989), 1097--1112.





\bibitem{nakamura_uhlmann}
G. Nakamura, G. Uhlmann, \emph{Complex geometrical optics solutions and pseudoanalytic matrices}, Ill-posed and inverse problems, 305--338, VSP, Zeist, 2002.

\bibitem{Pat}
G. P. Paternain, \emph{Inverse problems for connections}, Inverse problems and applications: inside out II, 369--409, Math. Sci. Res. Inst. Publ. {\bf 60}, Cambridge Univ. Press, Cambridge, 2013.

\bibitem{paternain2016}
G. P. Paternain, M. Salo, G. Uhlmann, H. Zhou, \emph{The geodesic {X-ray} transform with matrix weights}, preprint (2016), arXiv:1605.07894.



\bibitem{mikko_calderon}
M. Salo, \emph{The Calder\'{o}n problem on Riemannian manifolds}, Inverse problems and applications: inside out. II, 167--247, Math. Sci. Res. Inst. Publ. {\bf 60}, Cambridge Univ. Press, Cambridge, 2013.


\bibitem{uv2}
P. Stefanov, G. Uhlmann, A. Vasy, \emph{Inverting the local geodesic {X}-ray transform on tensors}, J. Anal. Math. (to appear), arXiv:1410.5145.


\bibitem{SU}
J. Sylvester, G. Uhlmann, \emph{A global uniqueness theorem for an inverse boundary value problem}, Ann. of Math. (2) {\bf 125} (1987), 153--169.

\bibitem{uv1}
G. Uhlmann, A. Vasy, \emph{The inverse problem for the local geodesic ray transform} (with an appendix by H. Zhou), Invent. Math. {\bf 205} (2016), 83--120.

\bibitem{Zworski}
M. Zworski, Semiclassical analysis, \emph{Graduate Studies in Mathematics {\bf 138}}, American Mathematical Society, Providence, RI, 2012. 

\end{thebibliography}

\Addresses

\end{document}